\numberwithin{equation}{section}
\theoremstyle{plain}
\newtheorem{theorem}{Theorem}[section]
\newtheorem{proposition}[theorem]{Proposition}
\newtheorem{corollary}[theorem]{Corollary}
\theoremstyle{definition}
\newtheorem{assumption}[theorem]{Assumption}
\theoremstyle{remark}
\newtheorem*{remark}{Remark}
\newcommand{\norm}[1]{\left\|#1\right\|}
\newcommand{\abs}[1]{\left\vert#1\right\vert}
\newcommand{\spr}[1]{\left\langle\,#1\,\right\rangle}
\newcommand{\kl}[1]{\left(#1\right)}
\newcommand{\Kl}[1]{\left\{#1\right\}}
\newcommand{\R}{\mathbb{R}}
\newcommand{\grad}{\nabla}
\renewcommand{\div}[1]{\operatorname{div}\left(#1\right)}
\newcommand{\eps}{\varepsilon}
\newcommand{\HoO}{{H^1(\Omega)}}
\newcommand{\LtO}{{L^2(\Omega)}}
\newcommand{\LiO}{{L^\infty(\Omega)}}
\newcommand{\intVx}[1]{\int\limits_\Omega #1 \, d\mathbf{x}}
\newcommand{\bO}{{\partial \Omega}}
\newcommand{\x}{\textbf{x}}
\newcommand{\uf}{\mathbf{u}}
\newcommand{\vf}{\mathbf{v}}
\newcommand{\hf}{\mathbf{h}}
\newcommand{\wf}{\mathbf{w}}
\newcommand{\RE}{\mathcal{R}}
\renewcommand{\S}{\mathcal{S}}
\newcommand{\ufhi}{\hat{\uf}^i}
\newcommand{\xhi}{\hat{\x}^i}
\newcommand{\at}{\tilde{a}}
\newcommand{\bt}{\tilde{b}}
\newcommand{\ct}{\tilde{c}}
\newcommand{\ufb}{\bar{\uf}}
\title{Displacement field estimation from OCT images utilizing speckle information with applications in quantitative elastography}
\author{
Ekaterina Sherina\footnote{University of Vienna, Faculty of Mathematics, Oskar Morgenstern-Platz 1, 1090 Vienna, Austria (ekaterina.sherina@univie.ac.at), joint corresponding author together with Lisa Krainz.},
Lisa Krainz\footnote{Center for Medical Physics and Biomedical Engineering, Medical University of Vienna, W\"ahringer G\"urtel 18-20, A-1090 Vienna, Austria, (lisa.krainz@meduniwien.ac.at), joint corresponding author together with Ekaterina Sherina.},
Simon Hubmer\footnote{Johann Radon Institute Linz, Altenbergerstra{\ss}e 69, A-4040 Linz, Austria, (simon.hubmer@ricam.oeaw.ac.at),},\\
Wolfgang Drexler\footnote{Center for Medical Physics and Biomedical Engineering, Medical University of Vienna, W\"ahringer G\"urtel 18-20, A-1090 Vienna, Austria, (wolfgang.drexler@meduniwien.ac.at)},
Otmar Scherzer\footnote{University of Vienna, Faculty of Mathematics, Oskar Morgenstern-Platz 1, 1090 Vienna, Austria (otmar.scherzer@univie.ac.at)} \footnote{Johann Radon Institute Linz, Altenbergerstra{\ss}e 69, A-4040 Linz, Austria (otmar.scherzer@ricam.oeaw.ac.at)}
}
\begin{document}

\maketitle
%

\begin{abstract}
In this paper, we consider the problem of estimating the internal displacement field of an object which is being subjected to a deformation, from Optical Coherence Tomography (OCT) images before and after compression. For the estimation of the internal displacement field we propose a novel algorithm, which utilizes particular speckle information to enhance the quality of the motion estimation. We present numerical results based on both simulated and experimental data in order to demonstrate the usefulness of our approach, in particular when applied for quantitative elastography, when the material parameters are estimated in a second step based on the internal displacement field. 

\smallskip
\noindent \textbf{Keywords.} Displacement Field Estimation, Optical Coherence Tomography, Optical Flow Estimation, Speckle Tracking, Quantitative Elastography
\end{abstract}


\section{Introduction}

The present paper is concerned with \emph{Quantitative Elastography} from \emph{Optical Coherence Tomography (OCT)} measurements. There one tries to estimate various material parameters of a physical sample from its behaviour under deformations. This is of particular interest in medicine, where for example one tries to identify malignant tissues inside the human skin by reconstructing the spatially varying elastic parameters.

Various approaches to quantitative elastography have been proposed over the years; see e.g.\ \cite{Doy12,KenKenSam2014,LiWijDanSamMunKenObe2016,LiWijSamKenMunObe2019,ManOliDreMahKru01,Schm98} and the references therein. These differ for example in the employed deformation (e.g., quasi-static, harmonic, transient), the underlying elasticity model (e.g., linear, viscoelastic, hyperelastic), the way the deformation is measured (only on the boundary, or everywhere inside the sample using, e.g., ultrasound, magnetic resonance, or optical imaging techniques), and whether a one-step approach (i.e., reconstructing the material parameters directly from the internal images) or a two-step approach (i.e., first estimating the displacement field from the images and then computing the material parameters) is used.

In this paper, we consider in particular the displacement estimation step in two-step approaches to (quantitative) OCT elastography; see e.g.\ \cite{AdiLiaKenJohSamBop2010,KenKenSam2014,LiWijDanSamMunKenObe2016,LiWijSamKenMunObe2019,Schm98,WanLar15,WijKenSam2020} and the references therein. In some cases, the axial component of the displacement field can be measured directly via phase-sensitive OCT, given that this modality is available. However, this is usually not enough to obtain quantitative material parameter reconstructions, since important information is also contained in the other field components. Hence, the full internal displacement field is typically reconstructed via various motion estimation techniques. A standard technique in this context is the normalized cross-correlation method based on so-called \emph{speckle} in the OCT images \cite{DunKir01,RogPatFujBre04,Schm98,SunStaYan11}.

Since the early stages of the development of scattering imaging modalities, researchers are aware of the phenomenon known as speckle. It is commonly observed in ultrasound, OCT, radar-based imaging, and radio-astronomy. Speckle is usually interpreted as the constructive and destructive interference of backscattered waves, which creates the granulated appearance (produced by bright and dark spots) of OCT scans (see Figure~\ref{fig_OCT_speckle}). It was noted that multiple factors contribute to the pattern of speckles, such as properties of the optical materials or the settings of the imaging system. Although often seen as a cause of noise in OCT data, speckles also carry important information, such as on the movement of particles inside the sample. All of these topics caused a broad discussion in the literature, see for example \cite{SchmXiaYun99}. Moreover, virtual \cite{GlaSchWid15, SchmZabWidGlaLiu15} or extra speckle \cite{Schm98} has been introduced to support the motion detection.

\begin{figure}[h!] 
	\centering
	\includegraphics[width = \textwidth, clip=true, trim = {4cm 5.5cm 2cm 4cm}]{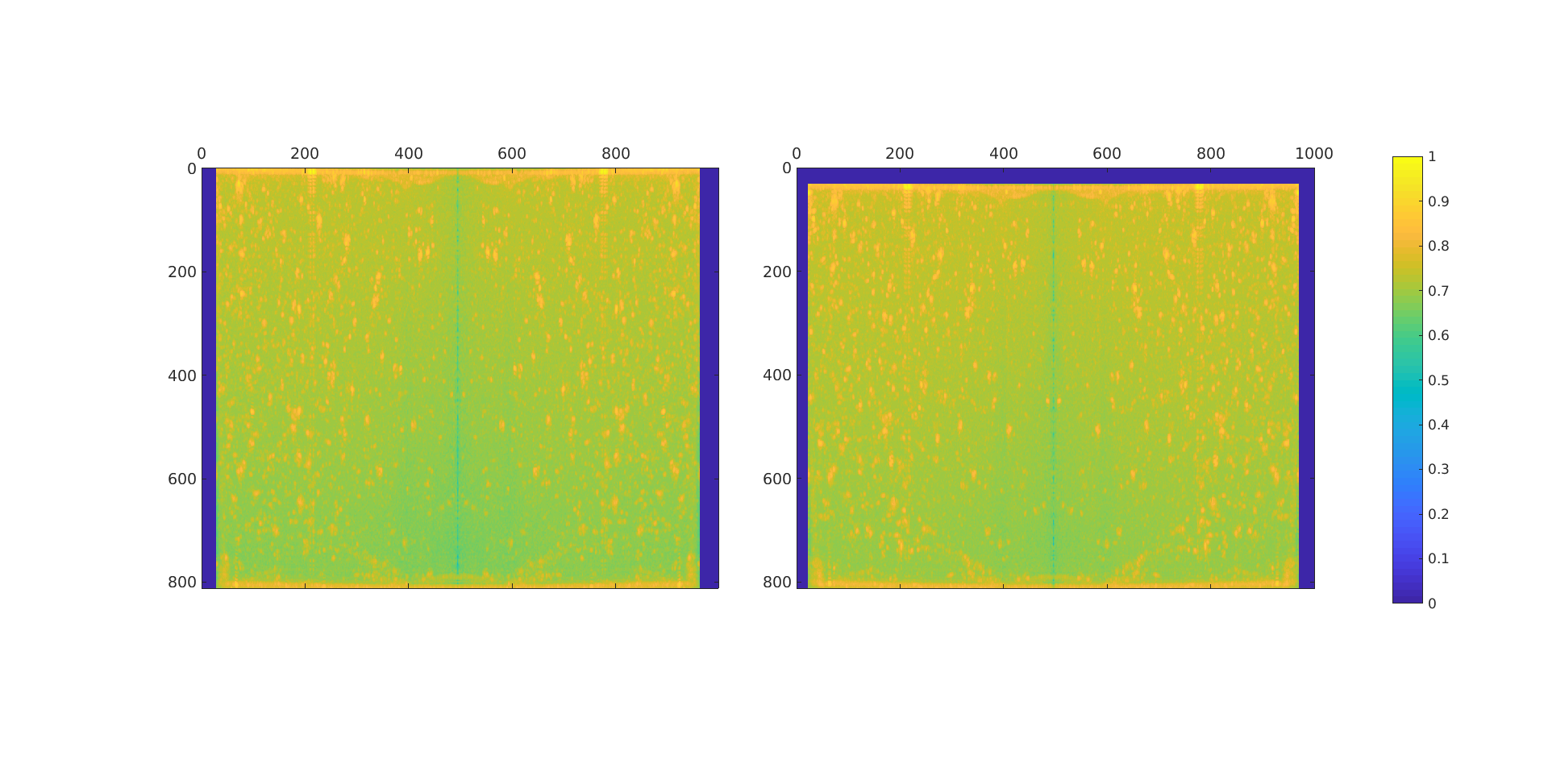}
	\quad
	\caption{Rotational maximum-intensity projection (MIP) of a volumetric OCT scan of a homogeneous cylindrical sample before (left) and after (right) compression.}
	\label{fig_OCT_speckle}
\end{figure}

The normalized cross-correlation method correlates OCT scans of pre-compressed and post-compressed media by calculating the cross-correlation coefficients for every pixel over a fixed surrounding area, and then creates a map of displacements from the maxima of those coefficients \cite{DunKir01,RogPatFujBre04,Schm98,SunStaYan11}. Unfortunately, this method is prone to miss-estimations of the displacement field if the applied strain is too small or too high, or if the size of the correlation area is not carefully chosen. Furthermore, the pixel-by-pixel processing is very time-consuming.

Hence, in this paper, we propose an alternative approach (see Sections~\ref{sect_bubble} and \ref{sect_opt_flow} below) to the cross-correlation method, which is computationally more efficient. The idea is to use the additional information contained in speckles present in OCT images in order to achieve an improved displacement field estimation. In particular, we propose a novel algorithm for detecting and tracking larger speckle formations, which we call \emph{bubbles}, and to extract the information on the general internal motion which they contain. Then, we use this information to enhance the displacement field reconstruction procedure, which is based on the OCT images. This can be interpreted as a ``large small details'' strategy. The estimated displacement fields can afterwards be used as input for various material parameter reconstruction methods in quantitative elastography, one of which we discuss below.

The outline of this paper is as follows. In Section~\ref{sect_bubble}, we introduce and discuss a method for the detection and tracking of bubbles, and in Section~\ref{sect_opt_flow}, we propose an algorithm for enhancing the displacement field estimation with the obtained bubble information. Furthermore, we present a mathematical analysis of the proposed approach. In Section~\ref{sect_mini}, we discuss possible ways to implement the proposed algorithm, and in Section~\ref{sect_numerics}, we present numerical results on both simulated and experimental data, demonstrating its practical usefulness. {In Section~\ref{sect_application}, we discuss the application of our proposed displacement field estimation algorithm to quantitative elastography, and  in Section~\ref{sect_numerics_II}, we provide numerical examples on the reconstruction of elastic parameters based on the estimated displacement fields, again on both simulated and experimental data, before proceeding to a short conclusion in Section~\ref{sect_conclusion}.

\section{A Bubble Approach to Speckle Tracking}\label{sect_bubble}

In this section, we propose a novel, heuristics-based image-processing algorithm for the detection of large speckle formations, which we call \emph{bubbles}, and the determination of their movement, from a pair of successive volumetric OCT data. The bubble information obtained from this \emph{bubble tracking} algorithm is then used in Section~\ref{sect_opt_flow} below to enhance the reconstruction quality of the displacement field estimation.

%

Our proposed method consists of the following three general steps: Pre-processing of the OCT-scan image data, detection of bubbles in the processed images, and estimation of the movement of the bubbles. These three general steps are described in detail below. Since the experimental data considered in Section~\ref{sect_numerics} stems from a  rotationally invariant cylindrical sample, the following description of the algorithm steps is tailored to this case. Note, however, that the algorithm can be generalized in a straightforward manner to non-symmetric samples.

\subsection*{Step I: Pre-processing of the OCT-scan image data}

For the pre-processing step, we start by taking a pair of volumetric OCT scans recorded from two sequential compression states of the sample. These data, which usually span several orders of magnitude, are commonly displayed in a base-$10$ logarithmic scale to enhance the visibility of lower refractions. For our purpose, we take the logarithmized data and rescale it such that its brightness lies within the interval $[0,1]$. Since we consider a rotationally invariant cylindrical sample here, we search for its center and radius through lateral segmentation and circle fitting. Using this information, the sample is aligned in the x0y plane such that its center is located at the same point in both scans. The `height' of the cylinder thus aligns with the z-axis, and data extending the actual height of the cylinder is cut. Figure~\ref{fig_OCT_speckle} depicts a rotational MIP of OCT data pre-processed in the above way. The knowledge of the origin and height of the cylinder for both compression states provides us with an initial guess of the maximal possible axial and radial displacement which can have occurred within the sample.

\subsection*{Step II: Detection of bubbles in the processed images}

The next step is the detection of bubbles within the OCT data. For this, we first apply a Gaussian filter with a standard deviation of around $0.8$ to $1$ to the data, in order to reduce background intensity jumps and to slightly smooth the granulated appearance of bubbles. Based on the histogram statistic of the volumetric dataset, we define a brightness threshold of $0.5-1\%$ of the brightest pixels, which we use to binarize the volumetric data. Each bubble in the dataset thus becomes a connected group of voxels with value $1$ within a background of uniform value $0$. Identifying and grouping the connected components of those binary images, we obtain the different bubbles, of which we extract their centroids (by fitting them with ellipsoids) as well as their pixel-volumes. Any thereby detected bubble which is smaller in voxel-volume than a predefined threshold (between 80 and 100 voxel-volumes in the experimental data set considered below) was discarded. See Figure~\ref{fig_OCT_speckle_tracking} for an illustration of the outcome of the bubble detection step described above, where the procedure was applied to the OCT images depicted in Figure~\ref{fig_OCT_speckle}.

\subsection*{Step III: Estimation of the movement of the bubbles}

In the third step, we aim at estimating the motion of the detected bubbles between the two recorded volumetric images.
 
We now compare each of the detected bubbles in the first (`start') scan with every bubble detected in the second (`end') scan. For each candidate pair of two bubbles $A$ and $B$ from the `start' and `end' scan, respectively, we compute their voxel-volumes $V_A$ and $V_B$, as well as the z-coordinates of their centroids $z_A$ and $z_B$. Furthermore, we calculate the distance between their two centroids ($d_{AB}$), the distance between the cylinder axis and the `start' centroid ($d_{O_1A}$), the distance between the cylinder axis and the `end' centroid ($d_{O_2B}$), the angle between the cylinder axis and the line connecting the `start' and `end' centroids ($\alpha$), as well as the vertical angle in the x0y plane between the line passing through the cylinder origin and the `start' centroid, and the line passing through the cylinder origin and the `end' centroid ($\varphi$). 

For each pair of bubbles $(A,B)$ we check if the following basic intuitive geometric criteria are satisfied: 
 
\begin{enumerate}
	\item The volume of a bubble does not change significantly in the images between  different compression phases, i.e.,
		\begin{equation*}
			\abs{V_A - V_B} < \eps \,.
		\end{equation*}
	
	\item A bubble does not move more than the displacement (applied for deforming the sample) in axial direction $d_{\max}$ and more than the radial expansion of the sample in lateral direction, i.e.,
		\begin{align*}
			0 < d_{AB} &\le d_{\max} \,, \\
			d_{O_1A} & < d_{O_2B} \,.
		\end{align*}
	
	\item If the sample is compressed from above, then the bubbles are mainly shifted downwards (i.e., in z-direction) between the first scan and the second, i.e.,
			\begin{equation*}
				z_A < z_B \,.
			\end{equation*}
	
	\item In the case of rotational invariance, the sample experiences a side bulging, which implies that a bubble moves outwards from the cylinder axis in radial direction. Hence, one expect that there is either no tangential movement $\varphi$, or that it is at most very small, i.e.,
		\begin{equation*}
			\varphi < \varphi_{\max} \,.
		\end{equation*}
	
	\item The possible angle $\alpha$ between the cylinder axis and the line passing through the starting and end points of the tracked bubbles is limited by the strain applied to the sample, i.e.,
		\begin{equation*}
			\alpha_{\min} \le \alpha \le \alpha_{\max} \,. 
		\end{equation*}
		
\end{enumerate}

Two bubbles $A$ and $B$ are considered to match if they satisfy all of the six inequalities stated above. Hereby, the constants $\varepsilon$, $d_{\max}$, $\alpha_{\min}$, $\alpha_{\max}$, and $\varphi_{\max}$ have to be suitably chosen in dependence on the strain applied in a concrete experiment. We then get an estimate of the bubble movement by computing the vector which points from the center of mass of bubble $A$ to the center of mass of the matched bubble~$B$. Two examples of the outcome of the bubble tracking step described above are shown in Figure~\ref{fig_OCT_speckle_tracking} (right) and Figure~\ref{fig_OCT_displacement_rotation}. There, the procedure was applied to the OCT images depicted in Figure~\ref{fig_OCT_speckle}, and to the corresponding complete volumetric OCT scan, respectively.

\subsection*{}
The behaviour of the above algorithm is demonstrated in Section~\ref{sect_numerics} below. It is both robust and efficient/fast on OCT scans with bright sharp spot reflectors. In terms of implementation, note that it can be useful not to choose one fixed $\eps$ in the inequality condition above, but to separate the bubbles into subsets of  `large' and `small' bubbles, and to use a different $\eps$ for each one of these groups. Furthermore, both the bubble detection step and the bubble matching step can be carried out in parallel. Note that there is typically at most one bubble in the second scan which matches a bubble in the first scan, and that not necessarily every bubble can be matched with another one, in particular, since the number of bubbles in two consecutive OCT scans is not constant. However, this is not a problem, since not all bubbles need to be matched in order to improve the reconstruction quality of the subsequent displacement field estimation.

\section{Displacement Field Estimation with Bubble Data}\label{sect_opt_flow}
	
In this section, we present a method for incorporating bubble movement, obtained for example with the approach presented above, into the displacement field estimation. First of all, note that for estimating the displacement or motion between images, a relatively simple way, which is valid for small displacements, is the \emph{optical-flow} equation
	\begin{equation} \label{eq_opt_flow_short}
		\nabla I \cdot \uf + I_t =0 \,,
	\end{equation}
where $I = I(\x,t)$ denotes the \emph{image intensity function} and $\uf(\x) = (\uf_1(\x),\uf_2(\x))$ denotes \emph{the displacement (motion, flow) field}. Assuming that both $I$ and $\uf$ are defined for $\x \in \Omega \subset \R^2$, one of the most common approaches for estimating the displacement field $\uf$ is to minimize the functional
	\begin{equation} \label{def_J_u}
		J(\uf) := \intVx{\left(\nabla I \cdot \uf + I_t\right)^2} +  \alpha \RE(\uf)\,,
		\qquad
		\forall \, t > 0 \,,
	\end{equation}
where $\alpha \geq 0$ is a regularization parameter and 
$\RE$ is a regularization functional, used to incorporate additional assumptions on the displacement field $\uf$. Many possible choices for $\RE$ have been proposed over the years, the most known and commonly used one perhaps being
	\begin{equation*}
	\begin{split}
		\RE(\uf) &:= \norm{\nabla \uf}^2_{\LtO} \,,
	\end{split}
	\end{equation*}
which ultimately gives rise to the well-known Horn-Schunck method. It encodes a smoothness assumption on the solution and, according to \cite{Schn91a,Sny89}, is one of the only two regularization functionals which only use first-order terms and also have a physical interpretation (the other one being the regularization functional of Nagel \cite{Nag87}).  

Note that there exists a wide variety of different approaches to motion estimation, in particular for \emph{large displacement optical flow} \cite{AubDerKor99, AubKor99, AubKor06, BakSchaLewRotBla11, BlaAna96,BroMal11,BroBreMal09, KorDerAub99,PapBruBroDidWei06,SunRotBla13,WeiBruBroPap06}. In fact, the optical flow equation \eqref{eq_opt_flow_short} is itself a first-order approximation to the more general \emph{image registration problem}, and it is valid for small displacements. Fortunately, the displacements considered throughout this paper are all comparatively small, and thus we can work with the (simpler) optical flow equation \eqref{eq_opt_flow_short}. Nevertheless, we also discuss an approach for treating larger displacements in Section~\ref{sect_multi} below.

As mentioned before, we want to utilize additional speckle (bubble motion) information in the displacement field reconstruction. In Section~\ref{sect_bubble}, we proposed a way to track the displacement of bubbles between two images, and thus we now assume that a set of displacement vectors $\{\ufhi =(\hat{u}^i_1, \hat{u}^i_2)\}_{i=1}^M$ with $\ufhi \subset \R^2$ is given. Since we want our estimated displacement field to coincide with these vectors at the points $\xhi =(\hat{x}^i_1,\hat{x}^i_2) \subset \Omega$ where the centers of mass of the bubbles are located, i.e., $\uf(\xhi) \approx \ufhi$, we propose the following functional:
	\begin{equation} \label{def_S_u}
		\S(\uf) := \sum\limits_{i=1}^{M}\intVx{g(\x,\hat{\x}^i) \abs{\uf(\x)-\hat{\uf}^i}^2} \,, 
	\end{equation}
where $g(\x,\hat{\x}^i)$ denotes the Gaussian function centered around $\xhi$ with standard deviation $\sigma$, i.e.,
	\begin{equation}\label{def_g}
		g(\x,\hat{\x}^i) = \frac{1}{2\pi \sigma^2} e^{-\frac{(x_1-\hat{x}_1^i)^2 + (x_2-\hat{x}_2^i)^2}{2\sigma^2}} \,.
	\end{equation}
Furthermore, since it also makes sense to assume in addition that the displacement field $\uf$ is smooth, we propose to estimate it by minimizing the following functional:
	\begin{equation}\label{def_F_u}
	\begin{split}
		F(\uf) := 
		\intVx{\left(\nabla I \cdot \uf + I_t\right)^2} +  \alpha \RE(\uf) + \beta \S(\uf) 
		= J(\uf) + \beta \S(\uf)\,,
	\end{split}	
	\end{equation}
where $\beta \geq 0$ is another regularization parameter. Depending on the choices of $\alpha$ and $\beta$, an emphasis can be placed on either the desired smoothness of the displacement field, or on its fit to the given bubble displacement $\ufhi$ (indirectly also influenced by $\sigma$).  

We are now going to analyse the problem of minimizing the functional $F$. For that, we make use of the analysis of Schn\"orr \cite{Schn91a}, which is reviewed below, who already considered the  problem of minimizing the functional $J$ from \eqref{def_J_u}. His key idea was to rewrite $J$ in the form
	\begin{equation}\label{J_at_bt_ct}
		J(\uf) = \frac{1}{2}\at(\uf,\uf) - \bt(\uf) + \ct \,,
	\end{equation}
with the bilinear form $\at(\cdot,\cdot)$, the linear form $\bt(\cdot)$ and the constant term $\ct$ given by
	\begin{equation}\label{def_at_bt_ct}
	\begin{split}
		\at(\uf,\vf) &:= 2 \intVx{\left( \nabla I \cdot \uf \right)\left( \nabla I \cdot \vf \right)} + 2 \alpha \intVx{\nabla \uf:\nabla \vf} \,,
		\\
		\bt(\vf) &:= -2 \intVx{I_t\left(\nabla I \cdot \vf \right)} \,, 
		\qquad
		\ct := \intVx{I_t^2} \,.
	\end{split}
	\end{equation}
After showing that $\at(\cdot,\cdot)$ is symmetric and elliptic on a suitable function space $V$, Schn\"orr derived results about the minimization of $J$ by noting that its unique minimizer is given as the unique solution of the linear equation
	\begin{equation}
		\at(\uf,\vf) = \bt(\vf) \,, \qquad \forall \, \vf \in V \,.
	\end{equation}
The assumptions required for his analysis are collected in the following
\begin{assumption} \label{ass_Schnoerr}
Let $\Omega \subset \R^2$ be a nonempty, bounded, open, and connected set with a Lipschitz continuous boundary $\bO$ and let $\grad I \in \LiO$ and $I_t \in \LtO$. Furthermore, let $\alpha > 0$ and let the components of $\grad I$ be linearly independent.
\end{assumption}

Under the above assumptions, Schn\"orr derived the results summarized in 
\begin{theorem}\label{thm_Schnoerr}
Let Assumption~\ref{ass_Schnoerr} hold and let the bilinear form $\at(\cdot,\cdot)$, the linear form $\bt(\cdot)$ and the constant term $\ct$ be defined as in \eqref{def_at_bt_ct}. Then $\at(\cdot,\cdot)$ and $\bt(\cdot)$ are bounded on $\HoO^2$ and $\at(\cdot,\cdot)$ is elliptic on $\HoO^2$, i.e., there exists a constant $C_E > 0$ such that
	\begin{equation}\label{at_elliptic}
		\at(\vf,\vf) \geq C_E \norm{\vf}_\HoO^2 \,,
		\qquad
		\forall \, \vf \in V  \,.
	\end{equation}
Furthermore, the unique minimizer of the problem
	\begin{equation}\label{min_J}
		\min\limits_{\uf \in \HoO^2} J(\uf) \,,
	\end{equation}
is given as the unique solution $\uf \in \HoO^2$ of the linear problem
	\begin{equation*}\label{at_b}
		\at(\uf,\vf) = \bt(\vf) \,, \qquad \forall \, \vf \in \HoO^2 \,.
	\end{equation*}
Furthermore, the solution of this equation depends continuously on the right-hand side $\bt(\cdot)$, but not necessarily on the image intensity function $I$. 
\end{theorem}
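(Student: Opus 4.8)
The plan is to treat this as a textbook application of the Lax--Milgram theorem to the symmetric bilinear form $\at$, with the only genuinely delicate point being the coercivity estimate \eqref{at_elliptic}. First I would dispose of the boundedness claims. For $\at$ I would estimate its two terms separately: using $\grad I \in \LiO$ together with Cauchy--Schwarz, the zeroth-order term satisfies $\abs{\intVx{(\grad I \cdot \uf)(\grad I \cdot \vf)}} \leq \norm{\grad I}_{\LiO}^2 \norm{\uf}_{\LtO}\norm{\vf}_{\LtO}$, while the gradient term is controlled directly by $\norm{\grad \uf}_{\LtO}\norm{\grad \vf}_{\LtO}$; since both $\norm{\cdot}_{\LtO}$ and $\norm{\grad\cdot}_{\LtO}$ are dominated by $\norm{\cdot}_\HoO$, boundedness follows with constant $2(\norm{\grad I}_{\LiO}^2 + \alpha)$. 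The same $\LiO$--$\LtO$ bound applied to $\bt$ together with $I_t \in \LtO$ gives $\abs{\bt(\vf)} \leq 2\norm{\grad I}_{\LiO}\norm{I_t}_{\LtO}\norm{\vf}_\HoO$, so $\bt$ is a bounded linear functional on $\HoO^2$.

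The hard part will be the ellipticity \eqref{at_elliptic}, because the gradient term in $\at$ only controls $\norm{\grad\vf}_{\LtO}^2$, and one must additionally recover control of $\norm{\vf}_{\LtO}$ out of the zeroth-order term $2\intVx{(\grad I \cdot \vf)^2}$; this is exactly where linear independence of the components of $\grad I$ must enter. I would argue by contradiction using compactness. Suppose no uniform $C_E > 0$ exists; then there is a sequence $\vf_n \in \HoO^2$ with $\norm{\vf_n}_\HoO = 1$ and $\at(\vf_n,\vf_n) \to 0$. Since $\at(\vf_n,\vf_n) \geq 2\alpha\norm{\grad\vf_n}_{\LtO}^2$, the gradients tend to zero in $\LtO$; by weak compactness of the unit ball of $\HoO^2$ together with the Rellich--Kondrachov compact embedding $\HoO \hookrightarrow \LtO$ (valid on the bounded Lipschitz domain $\Omega$), a subsequence converges strongly in $\LtO$ to some $\vf$, and combined with $\grad\vf_n \to 0$ this upgrades to $\vf_n \to \vf$ strongly in $\HoO^2$ with $\grad\vf = 0$. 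Connectedness of $\Omega$ forces $\vf$ to be a constant vector, and $\norm{\vf}_\HoO = 1$ makes it nonzero. On the other hand, $\grad I \in \LiO$ and strong $\LtO$-convergence yield $\intVx{(\grad I \cdot \vf)^2} = \lim_n \intVx{(\grad I \cdot \vf_n)^2} = 0$, so $\grad I \cdot \vf = 0$ a.e.; writing $\vf = (c_1,c_2)$ this reads $c_1 \partial_{x_1} I + c_2 \partial_{x_2} I = 0$ a.e., which by linear independence of the components of $\grad I$ forces $c_1 = c_2 = 0$, contradicting $\vf \neq 0$.

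With boundedness and coercivity established, I would invoke Lax--Milgram to obtain a unique $\uf \in \HoO^2$ solving $\at(\uf,\vf) = \bt(\vf)$ for all $\vf \in \HoO^2$. Since $\at$ is visibly symmetric and coercive, the quadratic functional $\uf \mapsto \foh\at(\uf,\uf) - \bt(\uf)$ is strictly convex and has this variational equation as its Euler--Lagrange condition (its Gateaux derivative in direction $\vf$ being $\at(\uf,\vf) - \bt(\vf)$); by \eqref{J_at_bt_ct} it differs from $J$ only by the constant $\ct$, so the Lax--Milgram solution is exactly the unique minimizer of \eqref{min_J}.

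Finally, for the stability claim I would test the variational equation with $\vf = \uf$ and use coercivity to get $C_E\norm{\uf}_\HoO^2 \leq \at(\uf,\uf) = \bt(\uf) \leq \norm{\bt}_{(\HoO^2)^*}\norm{\uf}_\HoO$, whence $\norm{\uf}_\HoO \leq C_E^{-1}\norm{\bt}_{(\HoO^2)^*}$; since the solution map $\bt \mapsto \uf$ is linear, this is precisely Lipschitz continuous dependence on $\bt$. I would close with the remark that this bound does not transfer to continuous dependence on $I$: the form $\at$ itself is built from $\grad I$, and the coercivity constant $C_E$ produced by the compactness argument is not quantitative and degenerates as the components of $\grad I$ approach linear dependence, so no $I$-uniform estimate is available.
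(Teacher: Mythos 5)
Your proposal is correct, but it cannot be compared to an in-paper argument for the simple reason that the paper gives none: its entire ``proof'' of Theorem~\ref{thm_Schnoerr} is the sentence that all statements are proven in Schn\"orr \cite{Schn91a}. What you have done is reconstruct that delegated proof, and your reconstruction is sound and is, in substance, the classical route (and essentially Schn\"orr's own): boundedness of $\at(\cdot,\cdot)$ and $\bt(\cdot)$ by Cauchy--Schwarz using $\grad I \in \LiO$ and $I_t \in \LtO$; ellipticity by the normalization--compactness contradiction argument, where Rellich--Kondrachov (valid by the Lipschitz boundary assumption) and the vanishing gradient term force the limit to be a nonzero constant vector by connectedness of $\Omega$, which the linear independence of the components of $\grad I$ then excludes --- you correctly identified this as the only delicate step and the precise place where that assumption is needed; then Lax--Milgram plus symmetry of $\at$ identifies the unique variational solution with the unique minimizer of $J$, the additive constant $\ct$ in \eqref{J_at_bt_ct} being irrelevant; and the a priori estimate $\norm{\uf}_\HoO \le C_E^{-1}\norm{\bt}_{(\HoO^2)^*}$ with linearity of the solution map gives continuous dependence on $\bt$. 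Your closing remark on $I$ is the right reading of the theorem's negative claim (the compactness argument yields a non-quantitative $C_E$ that degenerates as the components of $\grad I$ approach linear dependence), though strictly speaking ``not necessarily continuous'' is an assertion one would nail down with an explicit instance of discontinuity, as discussed in \cite{Schn91a}; since the paper itself only asserts this by citation, that is a quibble, not a gap. The practical trade-off: the paper's citation keeps its Section~3 focused on the new functional $F$, whereas your version makes the theorem self-contained and makes visible the compactness machinery that Proposition~\ref{prop_a_b} silently relies on when it reduces coercivity of $a(\cdot,\cdot)$ to coercivity of $\at(\cdot,\cdot)$.
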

\begin{proof}
The proofs of these statements can all be found in \cite{Schn91a}.
\end{proof}

We now apply the same ideas and results presented above to the problem of minimizing the functional $F$. Note that Schnörr's work would correspond to the case $\beta=0$. First of all, in analogy to \eqref{def_at_bt_ct}, we define the bilinear form $a(\cdot,\cdot)$, the linear form $b(\cdot)$, and the constant term $c$ by
	\begin{equation}\label{def_a_b_c}
	\begin{split}
		a(\uf,\vf) &:= \at(\uf,\vf) +   2 \beta \sum\limits_{i=1}^{M} \intVx{g(\x,\hat{\x}^i) \left(\uf \cdot \vf\right)} \,,
		\\
		b(\vf) &:=  \bt(\vf) + 2 \beta \sum\limits_{i=1}^{M} \intVx{g(\x,\hat{\x}^i) \left(\hat{\uf}^i \cdot \vf\right)} \,, 
		\\
		c &:= \ct + \beta \sum\limits_{i=1}^{M} \intVx{g(\x,\hat{\x}^i) \abs{\hat{\uf}^i}^2} \,,
	\end{split}
	\end{equation}
with $\at(\cdot,\cdot)$, $\bt(\cdot)$, and $\ct$ as defined in \eqref{def_at_bt_ct}. It is easy to see that in complete analogy to the representation \eqref{J_at_bt_ct} of $J$ there holds
	\begin{equation}\label{F_a_b_c}
		F(\uf) = \frac{1}{2} a(\uf,\uf) - b(\uf) + c \,. 
	\end{equation}
We now proceed by proving some important results for the bilinear and linear form in
\begin{proposition} \label{prop_a_b}
Let Assumption~\ref{ass_Schnoerr} hold and let the bilinear form $a(\cdot,\cdot)$ and the linear form $b(\cdot)$ be defined as in \eqref{def_a_b_c}. Then $a(\cdot,\cdot)$ is bounded and elliptic on $\HoO^2$ and $b(\cdot)$ is bounded on $\HoO^2$.
\end{proposition}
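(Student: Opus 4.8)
The plan is to exploit the additive structure of the forms in \eqref{def_a_b_c} and reduce both claims to the corresponding statements for $\at$ and $\bt$, which are already supplied by Theorem~\ref{thm_Schnoerr}. Writing $a(\uf,\vf) = \at(\uf,\vf) + a_S(\uf,\vf)$ and $b(\vf) = \bt(\vf) + b_S(\vf)$ with the new ``speckle'' terms
\[
	a_S(\uf,\vf) := 2\beta \sum_{i=1}^M \intVx{g(\x,\hat{\x}^i)\,(\uf\cdot\vf)} \,,
	\qquad
	b_S(\vf) := 2\beta \sum_{i=1}^M \intVx{g(\x,\hat{\x}^i)\,(\hat{\uf}^i\cdot\vf)} \,,
\]
it suffices to treat $a_S$ and $b_S$ on their own, since boundedness is preserved under addition of a bounded form and ellipticity is preserved under addition of a nonnegative one.

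For the ellipticity of $a$, I would simply observe that the Gaussian weights are nonnegative and $\beta \geq 0$, so that $a_S(\vf,\vf) = 2\beta\sum_{i=1}^M \intVx{g(\x,\hat{\x}^i)\abs{\vf}^2} \geq 0$ for every $\vf \in \HoO^2$. Combining this with the ellipticity estimate \eqref{at_elliptic} for $\at$ yields $a(\vf,\vf) = \at(\vf,\vf) + a_S(\vf,\vf) \geq \at(\vf,\vf) \geq C_E \norm{\vf}_\HoO^2$, so that $a$ is elliptic on $\HoO^2$ with the very same constant $C_E$.

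For boundedness I would use that each Gaussian is uniformly bounded by its peak value, $g(\x,\hat{\x}^i) \leq (2\pi\sigma^2)^{-1}$. A single Cauchy--Schwarz estimate then gives $\intVx{g(\x,\hat{\x}^i)(\uf\cdot\vf)} \leq (2\pi\sigma^2)^{-1}\norm{\uf}_\LtO\norm{\vf}_\LtO$; summing over the finitely many indices $i = 1,\dots,M$ and bounding the $L^2(\Omega)$-norms by the $H^1(\Omega)$-norms produces $\abs{a_S(\uf,\vf)} \leq (\beta M/\pi\sigma^2)\norm{\uf}_\HoO\norm{\vf}_\HoO$. The term $b_S$ is handled analogously: after pulling out the fixed vectors $\hat{\uf}^i$, Hölder's inequality against the constant function $1$ contributes a factor $\abs{\Omega}^{1/2}$ and gives $\abs{b_S(\vf)} \leq (\beta\abs{\Omega}^{1/2}/\pi\sigma^2)\left(\sum_{i=1}^M \abs{\hat{\uf}^i}\right)\norm{\vf}_\HoO$. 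Adding the boundedness of $\at$ and $\bt$ from Theorem~\ref{thm_Schnoerr} then finishes both boundedness claims.

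As for the main difficulty, there is essentially none of substance: the whole point is that the extra speckle contribution is a weighted $L^2$-inner product with a bounded and nonnegative weight, so it can only reinforce ellipticity while costing no more than a bounded perturbation. The only places needing minor care are carrying the finite sum over the $M$ bubbles through the estimates and making sure the constants $\beta$, $\sigma$ and $M$ enter the final bounds correctly; the link back to the $H^1(\Omega)$ setting in which $\at$ is elliptic is provided by nothing deeper than the trivial norm inequality $\norm{\cdot}_\LtO \leq \norm{\cdot}_\HoO$.
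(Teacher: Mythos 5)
Your proposal is correct and takes essentially the same approach as the paper's proof: ellipticity follows by dropping the nonnegative Gaussian term and invoking Schn\"orr's estimate \eqref{at_elliptic}, and boundedness of the extra speckle terms follows from Cauchy--Schwarz after bounding each Gaussian by its sup-norm. The only difference is cosmetic and in your favor: you bound $g(\x,\xhi)$ by its actual peak value $(2\pi\sigma^2)^{-1}$, whereas the paper asserts $\abs{g(\x,\xhi)}\le 1$, which strictly speaking only holds when $2\pi\sigma^2\ge 1$.
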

\begin{proof}
We start by showing that the linear form $b(\cdot)$ is bounded. By the definition of $b$ and since, by Theorem~\ref{thm_Schnoerr}, $\bt$ is bounded, it remains to show that
	\begin{equation*}
	\begin{split}
		\abs{2 \beta\sum\limits_{i=1}^{M} \intVx{g(\x,\hat{\x}^i) \left(\hat{\uf}^i \cdot \vf\right)}} \leq C \norm{\vf}_\HoO \,.
	\end{split}
	\end{equation*}
Using the Cauchy-Schwarz inequality, we get that
	\begin{equation*}
	\begin{split}
		&\abs{\sum\limits_{i=1}^{M} \intVx{g(\x,\hat{\x}^i) \left(\hat{\uf}^i \cdot \vf\right)}}
		\leq
		\sum\limits_{i=1}^{M} \norm{g(\cdot,\hat{\x}^i)}_\infty \intVx{ \abs{\hat{\uf}^i \cdot \vf}}
		\\
		& \qquad
		\leq
		\sum\limits_{i=1}^{M} \sqrt{\intVx{ \abs{\hat{\uf}^i}^2}
		\intVx{ \abs{\vf}^2}	}
		=
		\norm{\vf}_\LtO \kl{\sqrt{ \abs{\Omega} }
		\sum\limits_{i=1}^{M}  \abs{\hat{\uf}^i}} \,,
	\end{split}
	\end{equation*}
where we have used that as a Gaussian function, $\abs{g(\x,\xhi)} \leq 1$. From this, it now follows that $b(\cdot)$ is bounded on $\HoO^2$. Similarly, for the bilinear form $a(\cdot,\cdot)$, we only need to show that
	\begin{equation*}
	\begin{split}
		\abs{2 \beta \sum\limits_{i=1}^{M} \intVx{g(\x,\hat{\x}^i) \left(\uf \cdot \vf\right)}}
		\leq 
		C \norm{\uf}_\HoO \norm{\vf}_\HoO \,.
	\end{split}
	\end{equation*}
Again using the Cauchy-Schwarz inequality, we get that
	\begin{equation*}
	\begin{split}
		\abs{\sum\limits_{i=1}^{M} \intVx{g(\x,\hat{\x}^i) \left(\uf \cdot \vf\right)}}
		\leq
		\sum\limits_{i=1}^{M} \norm{g(\cdot,\hat{\x}^i)}_\infty \intVx{ \abs{\uf \cdot \vf}}
		\leq
		M \norm{\uf}_\LtO \norm{\vf}_\LtO \,,
	\end{split}
	\end{equation*}
which implies the boundedness of $a(\cdot,\cdot)$. Thus, it remains to show that $a(\cdot,\cdot)$ is coercive. For this, we look at
	\begin{equation*}
		a(\vf,\vf) = \at(\vf,\vf) +   2 \beta \sum\limits_{i=1}^{M} \intVx{g(\x,\hat{\x}^i) \left(\vf \cdot \vf\right)} 
		\geq \at(\vf,\vf) \,,
	\end{equation*}
where we used that $g(\x,\xhi) > 0$. Hence, the coercivity of $a(\cdot,\cdot)$ now follows from the coercivity of $\at(\cdot,\cdot)$ shown in Theorem~\ref{thm_Schnoerr}, which concludes the proof.
\end{proof}

With this, we are now able to prove well-posedness of our optical flow algorithm.

\begin{theorem}\label{thm_main}
Let Assumption~\ref{ass_Schnoerr} hold and let the bilinear form $a(\cdot,\cdot)$ and the linear form $b(\cdot)$ be defined as in \eqref{def_a_b_c}. Then, the unique minimizer of the problem
	\begin{equation}\label{min_F}
		\min\limits_{\uf \in \HoO^2} F(\uf) \,,
	\end{equation}
is given as the unique solution $\uf \in \HoO^2$ of the linear problem
	\begin{equation}\label{a_b}
		a(\uf,\vf) = b(\vf) \,, \qquad \forall \, \vf \in \HoO^2 \,.
	\end{equation}
Furthermore, the solution of this equation depends continuously on the right-hand side $b(\cdot)$, but not necessarily on the image intensity function $I$.
\end{theorem}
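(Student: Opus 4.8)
The plan is to recognize that Theorem~\ref{thm_main} is precisely the situation covered by the Lax--Milgram theorem, combined with the observation \eqref{F_a_b_c} that $F$ is a quadratic functional built from a symmetric, bounded, coercive bilinear form. The heavy lifting has already been done in Proposition~\ref{prop_a_b}, which establishes boundedness and ellipticity of $a(\cdot,\cdot)$, and boundedness of $b(\cdot)$, on the Hilbert space $\HoO^2$. Thus the proof is essentially an application of the standard theory for symmetric variational problems, and the main work is just to assemble the pieces in the right order rather than to prove anything genuinely new.

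First I would invoke Proposition~\ref{prop_a_b} to record that $a(\cdot,\cdot)$ is a bounded, elliptic (coercive) bilinear form and $b(\cdot)$ is a bounded linear form on $\HoO^2$. I would also note that $a(\cdot,\cdot)$ is symmetric: this is inherited from the symmetry of $\at(\cdot,\cdot)$ (established in Theorem~\ref{thm_Schnoerr} via Schn\"orr) together with the manifestly symmetric added term $2\beta\sum_i \int_\Omega g(\x,\xhi)(\uf\cdot\vf)\,d\x$. By the Lax--Milgram theorem, the variational problem \eqref{a_b}, namely $a(\uf,\vf)=b(\vf)$ for all $\vf\in\HoO^2$, then has a unique solution $\uf\in\HoO^2$.

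Next I would connect this solution to the minimization problem \eqref{min_F}. Using the representation $F(\uf)=\tfrac12 a(\uf,\uf)-b(\uf)+c$ from \eqref{F_a_b_c} together with the symmetry of $a$, a direct computation of the first variation shows that $\uf$ is a critical point of $F$ if and only if $a(\uf,\vf)=b(\vf)$ for all $\vf$; and since $a$ is coercive, $F$ is strictly convex, so this critical point is the unique global minimizer. Concretely, for any $\wf\in\HoO^2$ one expands $F(\uf+\wf)-F(\uf)=\big(a(\uf,\wf)-b(\wf)\big)+\tfrac12 a(\wf,\wf)$, and the first bracket vanishes by \eqref{a_b} while $\tfrac12 a(\wf,\wf)\geq \tfrac12 C_E\norm{\wf}_\HoO^2>0$ for $\wf\neq 0$, giving strict minimality.

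Finally, for the continuous-dependence statement, I would appeal to the standard stability estimate accompanying Lax--Milgram: coercivity yields $\norm{\uf}_\HoO\leq C_E^{-1}\norm{b}_{(\HoO^2)^*}$, and applying this to the difference of two solutions corresponding to right-hand sides $b_1,b_2$ gives Lipschitz dependence of $\uf$ on $b(\cdot)$. The remark that the solution does \emph{not} necessarily depend continuously on the image intensity $I$ carries over verbatim from Theorem~\ref{thm_Schnoerr}, since $I$ enters both $a$ and $b$ through $\nabla I$ and $I_t$ in a way that is not stable. I do not anticipate a genuine obstacle here; the only point requiring mild care is the verification of symmetry of $a(\cdot,\cdot)$, which is needed to pass from the variational equation to the minimization statement, but this is immediate from the explicit form in \eqref{def_a_b_c}.
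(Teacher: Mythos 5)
Your proof is correct and takes essentially the same route as the paper: the paper's proof is a one-line appeal to the Lax--Milgram lemma combined with Proposition~\ref{prop_a_b}, following Schn\"orr \cite{Schn91a}, which is exactly the argument you spell out (including the symmetry-based quadratic expansion linking the variational equation to the minimization problem). Your version simply makes explicit the details the paper leaves to the reference.
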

\begin{proof}
This follows from the Lax-Milgram-Lemma in the same way as in Schn\"orr \cite{Schn91a}, using the representation \eqref{def_F_u} together with Proposition~\ref{prop_a_b}.
\end{proof}

For the choice $\alpha = 0$ and $\beta > 0$, i.e., when one only regularizes with the functional $\S$ but not with $\RE$, the requirement in the above theorem that the components of $\grad I$ have to be linearly independent can be dropped. The only difference is that one then has to replace the space $\HoO$ by $\LtO$ everywhere. This is summarized in

\begin{theorem}\label{thm_main_II}
Let $\Omega \subset \R^2$ be a nonempty, bounded, open, and connected set with a Lipschitz continuous boundary $\bO$ and let $\grad I \in \LiO$ and $I_t \in \LtO$. Furthermore, let $\alpha = 0$ and $\beta > 0$, and let the bilinear form $a(\cdot,\cdot)$ and the linear form $b(\cdot)$ be defined as in \eqref{def_a_b_c}. Then, the unique minimizer of the problem
	\begin{equation*}\label{min_F_L2}
		\min\limits_{\uf \in \LtO^2} F(\uf) \,,
	\end{equation*}
is given as the unique solution $\uf \in \LtO^2$ of the linear problem
	\begin{equation*}\label{a_b_L2}
		a(\uf,\vf) = b(\vf) \,, \qquad \forall \, \vf \in \LtO^2 \,.
	\end{equation*}
Furthermore, the solution of this equation depends continuously on the right-hand side $b(\cdot)$, but not necessarily on the image intensity function $I$.
\end{theorem}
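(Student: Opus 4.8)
The plan is to verify the hypotheses of the Lax--Milgram lemma for the bilinear form $a(\cdot,\cdot)$ and the linear form $b(\cdot)$ on the Hilbert space $\LtO^2$, and then to conclude exactly as in the proof of Theorem~\ref{thm_main}. Indeed, the identity \eqref{F_a_b_c}, $F(\uf) = \foh\,a(\uf,\uf) - b(\uf) + c$, is purely algebraic and remains valid for $\alpha = 0$; hence, once $a$ is known to be bounded and coercive and $b$ bounded on $\LtO^2$, the unique solution $\uf \in \LtO^2$ of the variational equation $a(\uf,\vf) = b(\vf)$ is automatically the unique minimizer of $F$ over $\LtO^2$, and the standard Lax--Milgram stability estimate yields the asserted continuous dependence of $\uf$ on the right-hand side $b(\cdot)$.

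First I would check boundedness. Setting $\alpha = 0$ removes the gradient term $2\alpha\intVx{\nabla\uf:\nabla\vf}$ from $\at$, so that $a$ and $b$ now contain only zeroth-order terms in $\uf$ and $\vf$. The two Gaussian contributions in \eqref{def_a_b_c} were already estimated in the proof of Proposition~\ref{prop_a_b} purely in terms of the $\LtO$-norm, so those bounds transfer verbatim to the present setting. For the remaining (Schn\"orr) terms I would use $\nabla I \in \LiO$ together with the Cauchy--Schwarz inequality, e.g.
\[
\abs{2\intVx{(\nabla I\cdot\uf)(\nabla I\cdot\vf)}} \leq 2\norm{\nabla I}_{\LiO}^2\,\norm{\uf}_{\LtO}\norm{\vf}_{\LtO}
\]
and, analogously,
\[
\abs{2\intVx{I_t(\nabla I\cdot\vf)}} \leq 2\norm{\nabla I}_{\LiO}\norm{I_t}_{\LtO}\,\norm{\vf}_{\LtO},
\]
which together give the boundedness of $a$ and $b$ on $\LtO^2$.

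The \emph{main point} --- and the place where the linear independence of the components of $\nabla I$ can be dispensed with --- is the coercivity of $a$ on $\LtO^2$. With $\alpha = 0$, I have
\[
a(\vf,\vf) = 2\intVx{(\nabla I\cdot\vf)^2} + 2\beta\intVx{G(\x)\,\abs{\vf}^2} \geq 2\beta\intVx{G(\x)\,\abs{\vf}^2},
\]
where $G(\x) := \sum_{i=1}^M g(\x,\xhi)$ is a finite sum of $M \geq 1$ strictly positive Gaussians. Here the first term is merely nonnegative and may vanish for $\vf$ pointwise orthogonal to $\nabla I$, which is exactly why the linear independence assumption is no longer available as a source of coercivity; instead, coercivity is supplied entirely by the Gaussian data term. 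The key observation is that $G$ is continuous and strictly positive on the compact closure $\bar\Omega$, so that $c_0 := \min_{\x\in\bar\Omega} G(\x) > 0$, giving
\[
a(\vf,\vf) \geq 2\beta c_0\,\norm{\vf}_{\LtO}^2,
\]
i.e.\ coercivity on $\LtO^2$ with constant $C_E := 2\beta c_0 > 0$.

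I expect the only genuine subtlety to lie in this last lower bound on $G$, and it is a mild one: it is essential that $\Omega$ is bounded, since on an unbounded domain the Gaussians would decay to zero and no uniform positive lower bound on $G$ could be guaranteed. With boundedness and coercivity of $a$ and boundedness of $b$ established on $\LtO^2$, an application of the Lax--Milgram lemma, exactly as in Theorem~\ref{thm_main}, completes the proof.
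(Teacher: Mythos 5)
Your proposal is correct and follows essentially the same route as the paper's proof: both obtain coercivity of $a(\cdot,\cdot)$ on $\LtO^2$ from the fact that the Gaussians admit a uniform positive lower bound on the bounded domain $\Omega$ (the paper bounds each $g(\x,\xhi) \geq c_g$ individually, you bound the sum $G$ via its minimum on $\bar\Omega$ --- the same observation), note boundedness of $a$ and $b$ in the $\LtO$-norm, and conclude via Lax--Milgram together with the representation $F = \foh a - b + c$. Your write-up is in fact somewhat more explicit than the paper's, which merely asserts the boundedness step as ``easy to see,'' whereas you spell out the Cauchy--Schwarz estimates.
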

\begin{proof}
Since the domain $\Omega$ is bounded, it follows from the definition \eqref{def_g} of the functions $g(\x,\hat{\x}^i)$ that there exists a constant $c_g > 0$ such that
	\begin{equation*}
		g(\x,\hat{\x}^i) \geq c_g \,, 
		\qquad 
		\forall \, x \in \Omega \,.
	\end{equation*}
Since this implies that
	\begin{equation*}
		2 \sum\limits_{i=1}^{M} \intVx{g(\x,\hat{\x}^i) \left(\vf \cdot \vf\right)} 
		\geq 
		2 M c_g \norm{v}_\LtO^2  \,,
	\end{equation*} 
it follows that $a(\cdot,\cdot)$ is elliptic on $\LtO^2$ for $\alpha = 0$ and $\beta > 0$. Furthermore, it is easy to see that in this case $a(\cdot,\cdot)$ and $b(\cdot)$ are also bounded on $\LtO^2$. Hence, the statements of the theorem now follow by the Lax-Milgram-Lemma using the representation \eqref{def_F_u} in the same way as in \cite{Schn91a} or Theorem~\ref{thm_main_II} above.
\end{proof}

\begin{remark}
An extension of the above approach is possible if one deals with (almost) incompressible materials. Since for incompressible materials there holds $\div{\uf} = 0$, it is then natural to consider the additional regularization functional
	\begin{equation*}
	\begin{split}
		\mathcal{T}(\uf) &:= \norm{\div{\uf}}^2_{\LtO} \,,
	\end{split}
	\end{equation*}
and instead of \eqref{min_F} to solve the adapted minimization problem
	\begin{equation*}
	\begin{split}
		\min\limits_{\uf \in \HoO^2} \Kl{ F(\uf) + \gamma \, \mathcal{T}(\uf) }\,,
	\end{split}	
	\end{equation*}
where $\gamma > 0$ is an additional regularization parameter. The theoretical results derived above still remain valid, and thus one can expect to obtain a physically meaningful displacement field reconstruction also in this case. However, one now has to tune three regularization parameters instead of two. Note that for incompressible materials also the subsequent material parameter estimation step (see Section~\ref{sect_application} below) changes, since for example under models of linear elasticity one then no longer searches for the Lam\'e parameters $\lambda$ and $\mu$, but instead only for $\mu$ and the pressure \cite{LiWijDanSamMunKenObe2016, LiWijSamKenMunObe2019}. 
\end{remark}

\section{Minimization Approaches}\label{sect_mini}

In this section, we are discussing two alternative approaches, direct and iterative, for the minimization of the functional \eqref{def_F_u}. We analyze them based on the theoretical results presented above.

\subsection{Direct Methods}\label{sect_direct}

Due to Theorem~\ref{thm_main}, one way of computing the minimizer of the functional $F$ is by solving the variational problem \eqref{a_b}, which can for example be done via a \emph{finite element (FE) method}. Let $\{\psi_1\,,\dots\,, \psi_N\}$ be a basis of a finite dimensional subspace $V_h$ of $\HoO^2$, consisting for example of piecewise linear functions on a suitable triangulation of the domain $\Omega$. The FE approximation $u_h$ to the solution of \eqref{a_b} then is
	\begin{equation*}
		u_h = \sum\limits_{k=1}^N z_k \psi_k \,,
	\end{equation*}
where the vector $z = (z_k)$ is given as the solution of the matrix-vector equation $A z = y$, with the $N \times N$ matrix $A = (A_{ik})$ and the $N \times 1$ vector $y = (y_i)$ defined by
	\begin{equation*}
		A_{ik} := a(\psi_k,\psi_i) \,,
		\qquad
		\text{and}
		\qquad
		y_i := b(\psi_i) \,.
	\end{equation*}
Due to the  ellipticity of the bilinear form $a(\cdot,\cdot)$, the matrix $A$ is positive definite, and thus the matrix-vector system $Az=y$ admits a stable solution. Of course, many different versions of finite element methods are applicable to the variational problem, each with its own peculiarities (see for example \cite{Bra07,McL00,Nec11}). Also, for the solution of the resulting matrix-vector equation a number of different solvers are possible, from simple direct solvers to preconditioned iterative solvers. In particular, due to the underlying structure of the problem, multi-level (multi-grid) ideas are preferable, which not only lend themselves to parallelization, but can also be adapted for improving the estimation of relatively large displacement fields (see Section~\ref{sect_multi} on multi-scale approaches). 

\subsection{Iterative Methods}

In some cases, the minimization of $F$ via directly solving the underlying variational problem \eqref{a_b} as discussed in the previous section is either undesirable or infeasible, for example if the size of the underlying images are too large. In this case, one can apply iterative methods for the minimization of $F$, which we discuss in this section.

First of all, due to the representation of $F$ given in \eqref{F_a_b_c}, we can derive the continuity and differentiability results presented in the following two propositions.
\begin{proposition}
Let Assumption~\ref{ass_Schnoerr} hold. Then for the functional $F$ defined in \eqref{def_F_u} and for all $\uf,\ufb \in \HoO^2$ there holds
	\begin{equation*}
		\abs{F(\uf) - F(\bar{\uf})}	\le C\norm{\uf - \bar{\uf}}_\HoO \left( \norm{\uf-\bar{\uf}}_\HoO +  \norm{\bar{\uf}}_\HoO  + 1 \right)\,,
	\end{equation*}
for a constant $C > 0$, and thus $F$ is continuous on $\HoO^2$.
\end{proposition}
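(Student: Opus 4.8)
The plan is to exploit the quadratic representation \eqref{F_a_b_c}, namely $F(\uf) = \foh a(\uf,\uf) - b(\uf) + c$, so that the difference $F(\uf) - F(\ufb)$ collapses to terms in the bilinear form $a(\cdot,\cdot)$ and the linear form $b(\cdot)$ alone (the constant $c$ cancels), which can then be controlled by the boundedness already established in Proposition~\ref{prop_a_b}. Writing $\wf := \uf - \ufb$ and expanding $a(\uf,\uf) = a(\ufb + \wf,\, \ufb + \wf)$ by bilinearity, the symmetry of $a(\cdot,\cdot)$ --- which holds because $\at(\cdot,\cdot)$ is symmetric (as noted following Theorem~\ref{thm_Schnoerr}) and the added bubble term $\sum_i \intVx{g(\x,\xhi)(\uf\cdot\vf)}$ is manifestly symmetric --- yields
\[
	\foh\kl{a(\uf,\uf) - a(\ufb,\ufb)} = a(\ufb,\wf) + \foh a(\wf,\wf) \,.
\]
Combined with the linearity $b(\uf) - b(\ufb) = b(\wf)$, this produces the exact identity
\[
	F(\uf) - F(\ufb) = a(\ufb,\wf) + \foh a(\wf,\wf) - b(\wf) \,.
\]

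Next I would invoke the boundedness of $a(\cdot,\cdot)$ and $b(\cdot)$ on $\HoO^2$ from Proposition~\ref{prop_a_b}: there exist constants $C_a, C_b > 0$ with $\abs{a(\uf,\vf)} \le C_a \norm{\uf}_\HoO \norm{\vf}_\HoO$ and $\abs{b(\vf)} \le C_b \norm{\vf}_\HoO$. Applying the triangle inequality to the identity above and inserting these three bounds gives
\[
	\abs{F(\uf) - F(\ufb)} \le C_a \norm{\ufb}_\HoO \norm{\wf}_\HoO + \foh C_a \norm{\wf}_\HoO^2 + C_b \norm{\wf}_\HoO \,.
\]
Factoring out $\norm{\wf}_\HoO = \norm{\uf - \ufb}_\HoO$ and setting $C := \max\{C_a, C_b\}$ (so that each of the three coefficients $C_a$, $\foh C_a$, $C_b$ is bounded by $C$) then reproduces exactly the claimed estimate. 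Continuity of $F$ on $\HoO^2$ follows immediately: for fixed $\ufb$ the bracketed factor $\kl{\norm{\uf-\ufb}_\HoO + \norm{\ufb}_\HoO + 1}$ stays bounded as $\uf \to \ufb$, while the prefactor $\norm{\uf - \ufb}_\HoO$ tends to zero.

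I do not anticipate a genuine obstacle here, as the argument is essentially a polarization-type expansion of a bounded symmetric bilinear form followed by the already-proven boundedness of $a$ and $b$. The only point deserving a moment's care is the use of symmetry of $a(\cdot,\cdot)$ in the expansion; should one wish to avoid invoking it, the identical conclusion is reached by retaining both cross terms $a(\ufb,\wf) + a(\wf,\ufb)$ and bounding each separately, which affects only the numerical value of the constant $C$ and not the structure of the estimate.
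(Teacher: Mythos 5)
Your proposal is correct and follows essentially the same route as the paper's own proof: both expand $F(\uf)-F(\ufb)$ via the quadratic representation \eqref{F_a_b_c} into $\foh a(\uf-\ufb,\uf-\ufb) + a(\uf-\ufb,\ufb) - b(\uf-\ufb)$ (using symmetry of $a(\cdot,\cdot)$) and then apply the boundedness of $a(\cdot,\cdot)$ and $b(\cdot)$ from Proposition~\ref{prop_a_b}. Your explicit remark on where symmetry enters, and how to avoid it, is a harmless refinement; no gap.
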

\begin{proof}
It follows from the representation \eqref{F_a_b_c} together with Proposition~\ref{prop_a_b} that
	\begin{equation*}
	\begin{split}
		& \abs{F(\uf) - F(\bar{\uf})} 
		= \abs{\frac{1}{2} a(\uf,\uf) - b(\uf) - \frac{1}{2} a(\bar{\uf},\bar{\uf}) + b(\bar{\uf})} 
		\\
		&
		\qquad
		= \abs{\frac{1}{2} a(\uf-\bar{\uf},\uf-\bar{\uf}) + a(\uf-\bar{\uf},\bar{\uf}) + b(\bar{\uf}-\uf)} 
		\\
		& 
		\qquad
		\le C \kl{\norm{\uf-\bar{\uf}}^2_\HoO +  \norm{\bar{\uf}}_\HoO \norm{\uf-\bar{\uf}}_\HoO + \norm{\bar{\uf}-\uf}_\HoO} 
		\\
		& 
		\qquad
		=
		C \norm{\uf-\bar{\uf}}_\HoO \left( \norm{\uf-\bar{\uf}}_\HoO +  \norm{\bar{\uf}}_\HoO  + 1 \right) \,,
	\end{split}
	\end{equation*}
from which the assertion follows.
\end{proof}

\begin{proposition}\label{prop_F_derivative}
Let Assumption~\ref{ass_Schnoerr} hold. Then the functional $F$ defined in \eqref{def_F_u} is twice continuously Fr\'echet differentiable on $\HoO^2$ with
	\begin{equation}
	\begin{split}
		F'(\uf)(\hf) = a(\uf,\hf) - b(\hf)\,,
		\qquad
		\text{and}
		\qquad
		F''(\uf)(\hf,\wf) = a(\wf,\hf)
	\end{split}.
	\end{equation}	
\end{proposition}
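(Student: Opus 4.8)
The plan is to exploit the fact that $F$ is a quadratic functional, as exhibited by the representation \eqref{F_a_b_c}, namely $F(\uf) = \tfrac{1}{2}a(\uf,\uf) - b(\uf) + c$. For such functionals the Fr\'echet derivatives can be read off directly from the defining difference quotient, so no limiting arguments beyond elementary estimates are needed. Before starting, I would record the one structural fact the computation relies on: the bilinear form $a(\cdot,\cdot)$ is symmetric. This is immediate from the definition \eqref{def_a_b_c}, since both $\at(\cdot,\cdot)$, being a sum of the symmetric products $(\nabla I \cdot \uf)(\nabla I \cdot \vf)$ and $\nabla\uf:\nabla\vf$, and the Gaussian term $\sum_i \intVx{g(\x,\xhi)\,(\uf\cdot\vf)}$ are symmetric under interchange of their arguments.

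For the first derivative, I would expand $F(\uf+\hf)$ using bilinearity and symmetry of $a$ together with linearity of $b$, which yields
	\begin{equation*}
		F(\uf+\hf) - F(\uf) = a(\uf,\hf) - b(\hf) + \tfrac{1}{2}a(\hf,\hf) \,.
	\end{equation*}
The term $a(\uf,\hf) - b(\hf)$ is linear in $\hf$ and, by the boundedness of $a(\cdot,\cdot)$ and $b(\cdot)$ established in Proposition~\ref{prop_a_b}, a bounded linear functional; hence it is the candidate for $F'(\uf)(\hf)$. The remainder is controlled by boundedness of $a$ via $\abs{\tfrac{1}{2}a(\hf,\hf)} \le \tfrac{1}{2} C \norm{\hf}_\HoO^2 = o(\norm{\hf}_\HoO)$, which confirms Fr\'echet differentiability with $F'(\uf)(\hf) = a(\uf,\hf) - b(\hf)$.

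For the second derivative, I would differentiate the map $\uf \mapsto F'(\uf)$, which is affine in $\uf$. The difference
	\begin{equation*}
		F'(\uf+\wf)(\hf) - F'(\uf)(\hf) = a(\uf+\wf,\hf) - a(\uf,\hf) = a(\wf,\hf)
	\end{equation*}
is already exactly linear and bounded in $\wf$ with zero remainder, so $F''(\uf)(\hf,\wf) = a(\wf,\hf)$ as claimed (and symmetry of $a$ makes this coincide with $a(\hf,\wf)$, consistent with the expected symmetry of the second derivative). Finally, I would address the continuity statements: $F'$ is affine in $\uf$ with bounded linear part and is therefore Lipschitz continuous, while $F''(\uf) = a(\cdot,\cdot)$ does not depend on $\uf$ at all, so the map $\uf \mapsto F''(\uf)$ is constant and trivially continuous. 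Together these give that $F$ is twice continuously Fr\'echet differentiable on $\HoO^2$.

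I do not anticipate a genuine obstacle, since the quadratic structure forces all remainders to vanish identically or quadratically. The only point requiring care, and the sole place an error could enter, is the bookkeeping of the cross terms in the expansion of $a(\uf+\hf,\uf+\hf)$: it is precisely the symmetry of $a(\cdot,\cdot)$ that collapses $a(\uf,\hf)+a(\hf,\uf)$ into $2a(\uf,\hf)$ and thereby produces the stated first derivative rather than its symmetrization.
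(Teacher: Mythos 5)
Your proposal is correct and takes essentially the same route as the paper: the paper's proof simply observes that the claim is a direct consequence of the quadratic representation \eqref{F_a_b_c} together with the boundedness of $a(\cdot,\cdot)$ and $b(\cdot)$ from Proposition~\ref{prop_a_b}, which is precisely what your expansion makes explicit. Your added bookkeeping --- the symmetry of $a(\cdot,\cdot)$, the quadratic remainder $\tfrac{1}{2}a(\hf,\hf) = o(\norm{\hf}_\HoO)$, and the affineness of $\uf \mapsto F'(\uf)$ giving continuity of both derivatives --- is exactly the detail the paper leaves to the reader.
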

\begin{proof}
This is a direct consequence of the representation \eqref{F_a_b_c} of $F$ in terms of the bilinear form $a(\cdot,\cdot)$, the linear form $b(\cdot)$, and the constant $c$, together with the boundedness results from Proposition~\ref{prop_a_b}.
\end{proof}

As a consequence of the above result, we get
\begin{corollary}\label{cor_convexity}
Let Assumption~\ref{ass_Schnoerr} hold. Then the functional $F$ defined in \eqref{def_F_u} is strictly convex on $\HoO^2$.
\end{corollary}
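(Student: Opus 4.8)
The plan is to deduce strict convexity from the second-order information already assembled in Proposition~\ref{prop_F_derivative}, combined with the ellipticity of $a(\cdot,\cdot)$ from Proposition~\ref{prop_a_b}. Since $F$ is a quadratic functional, its second Fr\'echet derivative does not depend on the base point: by Proposition~\ref{prop_F_derivative} one has $F''(\uf)(\hf,\hf) = a(\hf,\hf)$ for all $\uf,\hf \in \HoO^2$. First I would invoke the standard second-order characterization of convexity for twice continuously Fr\'echet differentiable functionals on a Hilbert space, namely that such an $F$ is convex precisely when $F''(\uf)(\hf,\hf) \geq 0$ for all $\uf,\hf$, and strictly convex whenever this quantity is strictly positive for every $\hf \neq 0$.

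Second, I would feed in the ellipticity estimate of Proposition~\ref{prop_a_b}, namely $a(\hf,\hf) \geq C_E \norm{\hf}_\HoO^2$ with $C_E > 0$. This gives $F''(\uf)(\hf,\hf) = a(\hf,\hf) \geq C_E \norm{\hf}_\HoO^2 > 0$ for every $\hf \neq 0$, which by the above characterization immediately yields strict convexity. I would also note that this bound in fact delivers the stronger property of strong (uniform) convexity with modulus $C_E$, which is convenient when later arguing convergence of iterative minimization schemes.

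Alternatively, and avoiding any appeal to an abstract second-order criterion, I would argue directly from the representation $F(\uf) = \foh a(\uf,\uf) - b(\uf) + c$ in \eqref{F_a_b_c}. The affine part $-b(\cdot)+c$ is unchanged under a convex combination, while for the quadratic part the symmetry and bilinearity of $a(\cdot,\cdot)$ yield, for $\lambda \in (0,1)$ and $\uf \neq \vf$, the polarization identity
	\begin{equation*}
		\lambda F(\uf) + (1-\lambda) F(\vf) - F(\lambda \uf + (1-\lambda)\vf) = \foh \lambda (1-\lambda)\, a(\uf - \vf, \uf - \vf) \geq \foh \lambda(1-\lambda) C_E \norm{\uf - \vf}_\HoO^2 > 0 \,.
	\end{equation*}
This strict inequality is exactly the defining property of strict convexity.

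In either route the substantive work has already been done --- the ellipticity of $a(\cdot,\cdot)$ in Proposition~\ref{prop_a_b} and the differentiability in Proposition~\ref{prop_F_derivative} --- so I expect no genuine obstacle here. The only point requiring a little care is not to stop at mere positivity of the Hessian, but to exploit the uniform ellipticity constant $C_E$: it is the uniform (rather than merely pointwise) positivity that cleanly upgrades convexity to strict (indeed strong) convexity and keeps the estimate independent of the base point $\uf$.
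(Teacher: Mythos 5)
Your first route is exactly the paper's proof: the paper likewise combines $F''(\uf)(\hf,\hf) = a(\hf,\hf)$ from Proposition~\ref{prop_F_derivative} with the ellipticity bound $a(\hf,\hf) \geq C_E \norm{\hf}_\HoO^2$ from Proposition~\ref{prop_a_b}, and then cites a second-order convexity criterion (Bauschke--Combettes) to conclude. If anything, you are more careful than the paper on the one delicate point: the paper's proof literally only concludes that $F$ is \emph{convex}, whereas the corollary asserts \emph{strict} convexity; your observation that the uniform lower bound $C_E \norm{\hf}_\HoO^2 > 0$ for $\hf \neq 0$ (in fact strong convexity with modulus $C_E$) is what upgrades the conclusion closes that small gap explicitly. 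Your alternative second route --- the polarization identity
\begin{equation*}
	\lambda F(\uf) + (1-\lambda) F(\vf) - F(\lambda \uf + (1-\lambda)\vf) = \tfrac{1}{2} \lambda (1-\lambda)\, a(\uf - \vf, \uf - \vf)
\end{equation*}
applied to the representation \eqref{F_a_b_c} --- is also correct (it uses only the symmetry and bilinearity of $a(\cdot,\cdot)$, both of which hold here), and it has the advantage of being fully self-contained: it needs neither the Fr\'echet differentiability result nor any external citation, only Proposition~\ref{prop_a_b}.
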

\begin{proof}
Due to Proposition~\ref{prop_F_derivative} and Proposition~\ref{prop_a_b} there holds
	\begin{equation*}
		F''(\uf)(\hf,\hf) = a(\hf,\hf) \geq C_E \norm{\hf}_\HoO^2 \,,
	\end{equation*} 
and thus it now follows from \cite[Proposition~17.3]{BauCom11} that $F$ is convex.
\end{proof}

The above results imply that any first- or second-order iterative method can applied for the minimization of $F$, especially since due to the strict convexity of $F$ (see Corollary~\ref{cor_convexity}) the minimizer is unique. In particular, the first-order optimality condition
	\begin{equation*}
		F'(\uf)h = 0 \,,
		\qquad 
		\forall \, h \in \HoO^2 \,, 
	\end{equation*}
is both necessary and sufficient in this case (compare with Theorem~\ref{thm_main}). Since, by the Riesz representation theorem, there exists a unique element $\nabla F(\uf) \in \HoO^2$ (the \emph{gradient}) such that
	\begin{equation*}
		F'(\uf)h  = \spr{\nabla F(\uf),h} \,,
		\qquad \forall \, h \in \HoO^2 \,,
	\end{equation*}
the first-order optimality condition is equivalent to
	\begin{equation*}
		\nabla F(\uf) = 0 \,.
	\end{equation*}
	
Applying for example the idea of fixed-point iteration to this equation yields the simple and well-known \emph{gradient descent} method 
	\begin{equation*}
		\uf_{k+1} = \uf_k - \omega_k F'(\uf_k)\,,
	\end{equation*}
where $\omega_k$ is a suitably chosen stepsize. For our particular problem, it follows from the explicit expression of $F'(\uf)h$ that the gradient $F'(\uf) \in \HoO^2$ is given as the unique solution of the variational problem
	\begin{equation*}
		\spr{F'(\uf), \vf}_{\HoO} = a(\uf,\vf) - b(\vf)\,, \qquad \forall \, \vf \in \HoO^2 \,,
	\end{equation*}
and can therefore be easily computed numerically, using for example again a finite-element method, which in this case can be implemented very efficiently, since the bilinear form is now just the scalar product of $\HoO^2$. 

Of course there is a multitude of different iterative methods which can be applied to our minimization problem. The above considerations were mainly made to underline that since we showed that the problem is well-behaved in many ways, most of them are applicable and should lead to good results.

\subsection{Multi-Scale Approach}\label{sect_multi}

In this section, we consider the so-called \emph{multi-scale approach} (see for example \cite{LauKorMem04,MeiSanKon13,Mod09,Mod03}), for improving the reconstruction quality of the proposed methods in a numerical implementation, in particular for larger displacement fields. As the name suggests, one considers the optical flow problem on various scales, i.e., levels of discretization, by a suitable downscaling of the input images. On each of the scales, one solves the optical flow problem and then combines the results from each scale to obtain a final optical flow estimation. The idea behind this approach is that by considering the problem on multiple scales, information about the flow on those scales can be extracted more reliably than from the original input images alone. For example, due to the inherent locality of their estimation approaches, many optical flow algorithms often end up producing flow fields on fine scales only, and are not able to capture larger displacements. The multi-scale approach is a way to produce flow fields capturing larger displacements as well.

Following the ideas and notation of \cite{MeiSanKon13}, we start by fixing a number $N$ of scales, indexed by $s$ and ranging from the finest (original) scale $s = 0$ to the coarsest scale $s = N-1$. Fixing some down-sampling factor $\eta \in (0,1)$, the image intensity function on the scale $s$, denoted by $I^s(\x,t)$, is then computed recursively via the formula
	\begin{equation*}
		I^s(\eta \x,t) := G_\sigma \ast I^{s-1}(\x,t) \,,
	\end{equation*}
where $G_\sigma$ is a Gaussian filter kernel with standard deviation $\sigma$ and $I^0(\x,t)$ is the image intensity function of the original image. The value of $\sigma$ depends on $\eta$ and is chosen as
	\begin{equation}\label{def_sigma_eta}
		\sigma(\eta) := \sigma_0 \sqrt{\eta^{-2}-1} \,,
	\end{equation}
where the authors of \cite{MeiSanKon13} suggest $\sigma_0 := 0.6$ from experience with numerical examples. The multi-scale approach then consists in solving the optical flow problem on the scale $s$ from the images $I^s$ and to use the obtained displacement field $\uf^s$ as an initial information for the computation on the scale $s-1$. Besides a method for solving the optical flow problem on any scale $s$, all that is required in addition for the application of the multi-scale approach is an up-scaling routine to transfer the obtained solution $\uf^s$ to the finer scale $s-1$. This can for example be achieved by a suitable interpolation routine.

For the iterative solution approaches presented above, incorporating the multi-scale idea is straightforward. Starting from the coarsest scale, one applies the chosen iterative method until a suitable approximation of the displacement field is obtained. This field is then transferred to a finer grid and serves as the initial guess for the iteration on that scale, and so on.

For the direct methods discussed above, the multi-scale approach combines well with a multi-grid finite element approach \cite{Bra07}. For this, we consider a sequence $(V_h^s)_{s=0}^{N-1}$ of nested finite element spaces and denote by $a^s(\cdot,\cdot)$ and $b^s(\cdot)$ the bilinear and linear forms $a(\cdot,\cdot)$ and $b(\cdot)$, in which $\nabla I$ and $I_t$ are replaced by their Gaussian-filtered counterparts $\grad I^s$ and $I^s_t$, respectively. On the coarsest level $s=N-1$, one then computes the flow field $\uf^{N-1} \in V_h^{N-1}$ as the solution of the variational problem
	\begin{equation*}
		a^{N-1}(\uf^{N-1},\vf) = b^{N-1}(\vf) \,, \qquad \forall \, \vf \in V_h^{N-1} \,.
	\end{equation*} 
For all other scales $0 \leq s \leq N - 2$, the flow field $\uf^{s} \in V_h^{s}$ is computed recursively via
	\begin{equation*}
		\uf^{s} = P_{s-1}^{s} \uf^{s-1} +  \hf^{s} \,,
	\end{equation*}
where $P_{s-1}^{s}$ denotes the orthogonal projector from $V_h^{s-1}$ to $V_h^{s}$ and $\hf^{s} \in V_h^{s}$ is defined as the (unique) solution of the variational problem
	\begin{equation*}
		a^{s}(\hf^{s},\vf) = b^{s}(\vf) - a^{s}(P_{s-1}^{s}\uf^{s-1},\vf) \,,
		\qquad
		\forall \, \vf \in V_h^{s} \,.
	\end{equation*}
The flow field $\uf^0$ computed on the finest scale $V_h^0$ is then chosen as the approximation to the solution of the displacement field estimation problem.

\section{Numerical Results I: Displacement Estimation}\label{sect_numerics}

In this section, we present a number of numerical examples based on both simulated and experimental data demonstrating the usefulness of our proposed method for displacement field estimation. Numerical examples of material parameter reconstruction based on the obtained displacement fields are presented in Section~\ref{sect_numerics_II} below. The presented results were obtained using the direct method introduced in Section~\ref{sect_direct} together with the multi-scale approach from Section~\ref{sect_multi}. We used conforming, piecewise linear finite elements on a regular triangulation of the square (image) domain $\Omega$.  The implementation was done in Python using the finite-element library Fenics \cite{AlnBleHakJohKeh15} on a desktop computer running on a Linux OS with an 8 core processor (Intel(R) core(TM) i7-7700 CPU@3.60GHz) and 62.9GB RAM.

All our numerical results were obtained from two images $I_1$ and $I_2$, as would be available from an OCT scan before and after compression of a sample. This means that the image intensity function $I(\x,t)$ is only given at two time points. Hence, for the numerical implementation we approximated the temporal derivative $I_t$ in the definition of the bilinear and linear forms $a(\cdot,\cdot)$ and $b(\cdot)$ by a backward difference quotient. The spatial gradient $\nabla I$ was approximated by $\nabla I_1$, which is computed as the spatial gradient of the finite-element function representing the image $I_1$.

\subsection{Simulated Data}

In this section, we apply our displacement field estimation approach to two problems with simulated data, the first being an optical flow test problem and the second mimicking an experimental quantitative elastography setup.

\subsubsection{Moving Squares}

The first problem, which is for example used in \cite{AubDerKor99,Schn91a} for testing optical flow estimation methods, is the problem of the moving squares. Therein, one considers two squares of constant image intensity moving towards each other. This is a difficult test problem, since the gradient of the images only contains information along the borders of the squares, and it admits a number of different plausible solutions. It is well-known that the reconstruction quality increases if for example the images are overlaid with an inhomogeneous background.

In order to test our proposed approach, we thus consider the two images $I_1$ and $I_2$ depicted in Figure~\ref{fig_squares_speckle}, which show two moving squares with added bubbles, as they would for example result from an OCT scan. Also depicted in Figure~\ref{fig_squares_speckle} is the displacement field acting between the two images, which we want to recover.

\begin{figure}[ht!]
	\centering
	\includegraphics[height = 0.16\textheight, clip=true, trim = {0.5cm 1cm 1cm 0cm}]{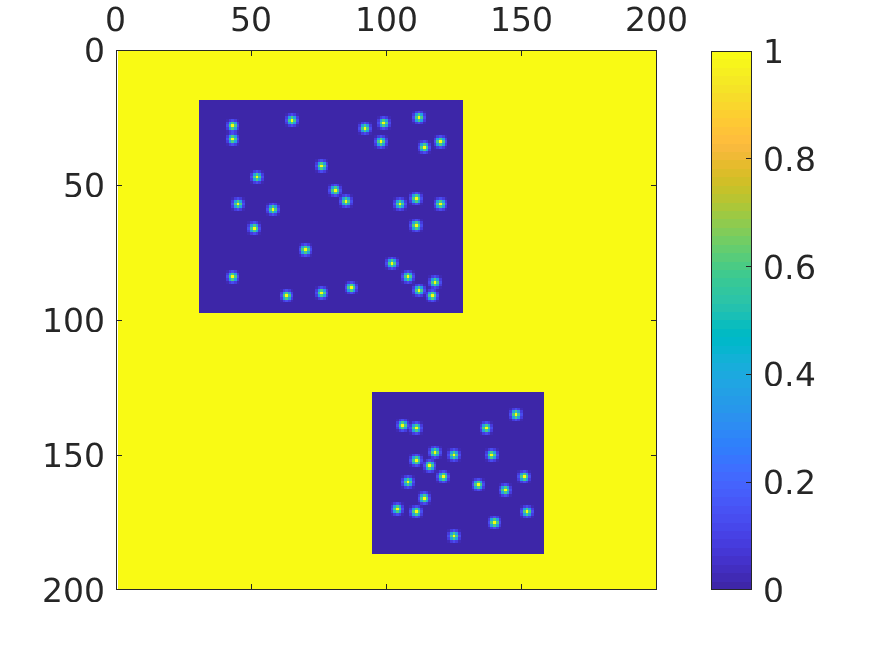}
	\quad
	\includegraphics[height = 0.16\textheight, clip=true, trim = {0.5cm 1cm 1cm 0cm}]{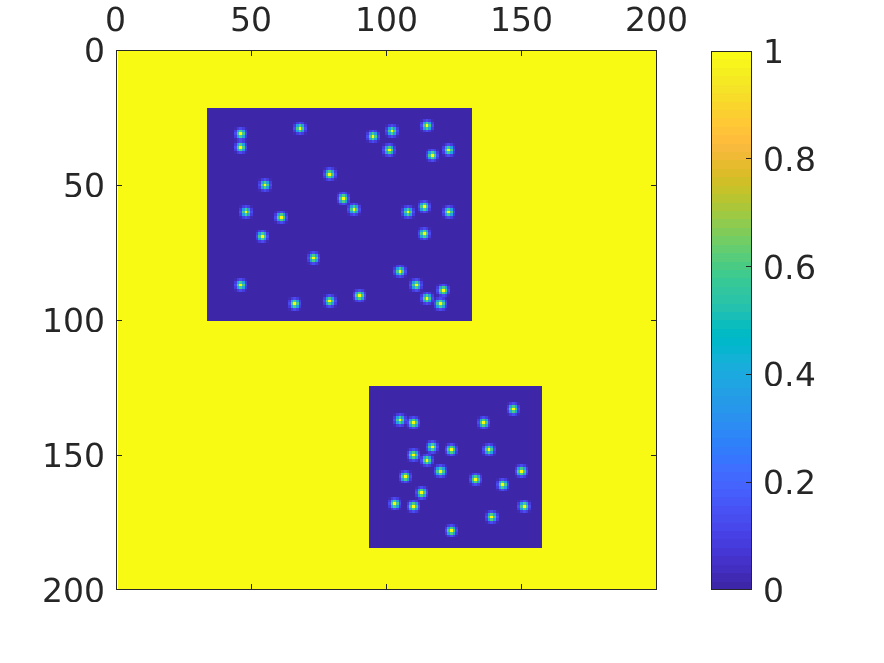}
	\quad
	\includegraphics[height = 0.16\textheight, clip=true, trim = {1cm 1cm 0cm 0.5cm}]{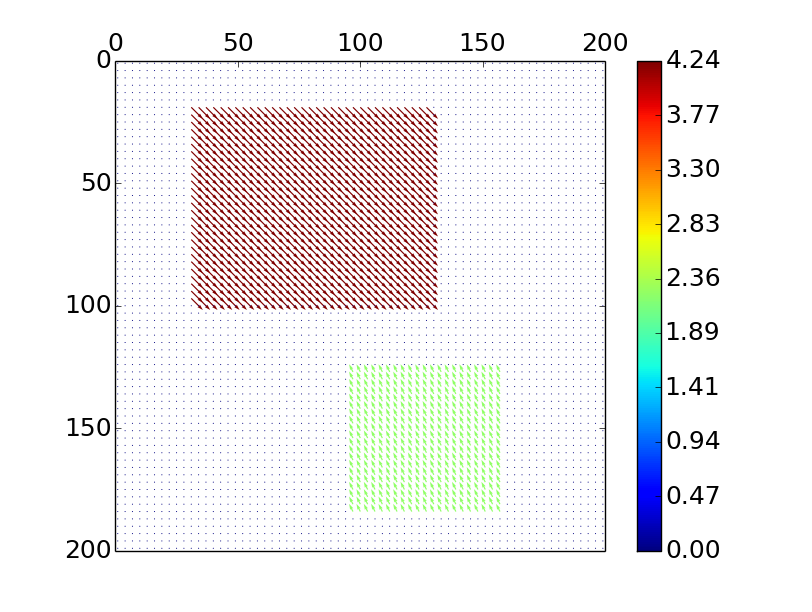}
	\caption{Test images $I_1$ (left) and $I_2$ (middle) of two moving squares with bubbles, as well as the displacement field encoding their movement, i.e.\ the ground truth (right).}
	\label{fig_squares_speckle}
\end{figure}

We want to show that utilizing the bubble motion information leads to an improved displacement field reconstruction. Hence, for the following test we assume that the displacement of all the $50$ bubbles in the test images is known. However, in order to model the unavoidable inaccuracies in the tracking process, we add $0.1 \%$ relative noise to the whole set of the displacement vectors, which is already much compared to the magnitude of a single displacement vector. 

For obtaining the results presented below, we use the following choice of parameters: For the smoothness-regularization, we choose $\alpha = 0.8$, and when utilizing the bubble information, we choose $\beta= 4$ as well as $\sigma = 5$ in the definition of $g(\x,\xhi)$ in \eqref{def_g}. For using the multi-scale approach, we take $5$ scale-levels, $\sigma_0 = 0.6$, and the grid-downscaling parameter $\eta = 0.5$, which leads to $\sigma(\eta) \approx 1.03923$ in formula \eqref{def_sigma_eta}.  

Figure~\ref{fig_squares_speckle_recon} depicts the results of the displacement field estimation, for the four different combinations of not utilizing ($\beta = 0$) or utilizing ($\beta > 0$) the bubble information, and using or not using the multi-scale approach. Both the colour and the length of the displacement vectors in the figures thereby correspond to the local magnitudes of the reconstructed displacement fields. One can clearly see that utilizing the bubble information leads to reconstructions with greatly improved quality. In particular, the general tendency of optical flow to visualize only the movement of the boundary is counteracted. Both qualitative and quantitative features of the exact displacement field (compare with Figure~\ref{fig_squares_speckle})) can be recovered accurately utilizing the bubble displacement information. Furthermore, one can also see from the plots that using the multi-scale approach helps to obtain a more uniform field that is also closer in magnitude to the exact solution.

\begin{figure}[ht!]
	\centering
	\includegraphics[width=0.45\textwidth, clip = true, trim = {1cm 1cm 0cm 0cm}]{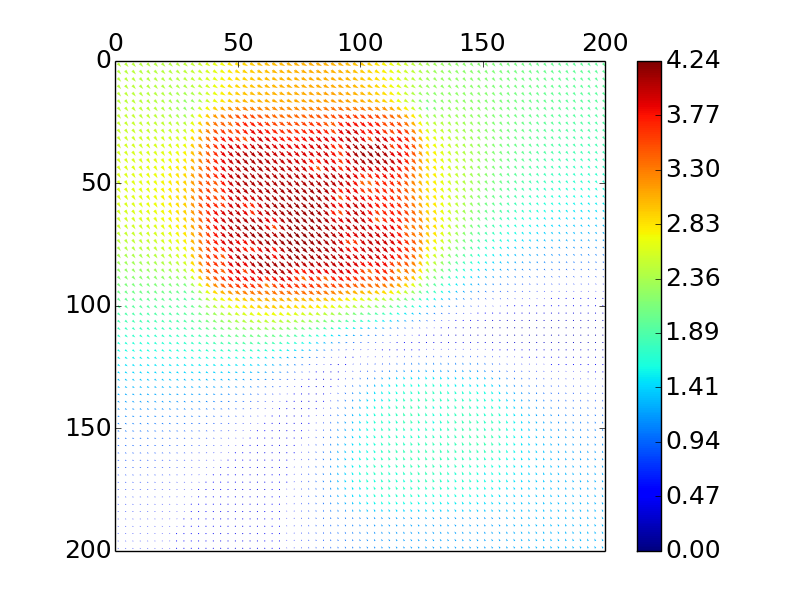}
	\includegraphics[width=0.45\textwidth, clip = true, trim = {1cm 1cm 0cm 0cm}]{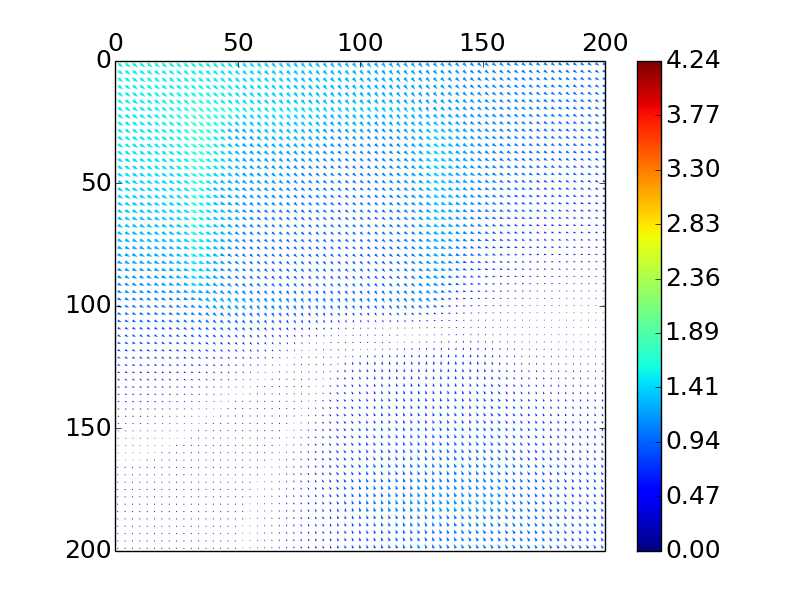}
	\\ \vspace{10pt}
	\includegraphics[width=0.45\textwidth, clip = true, trim = {1cm 1cm 0cm 0cm}]{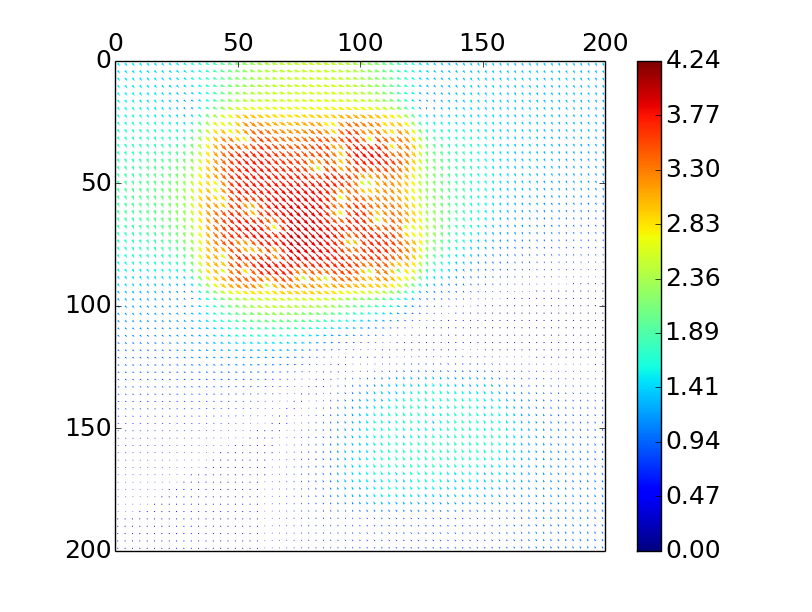}
	\includegraphics[width=0.45\textwidth, clip = true, trim = {1cm 1cm 0cm 0cm}]{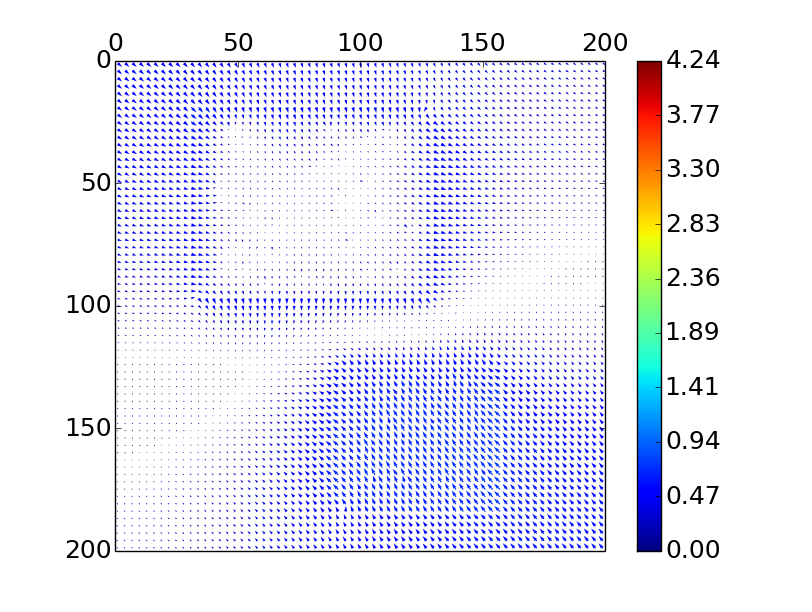}
	\caption{Estimated displacement field for the choices $\beta = 4$ (left) and $\beta = 0$ (right), both with (top) and without (bottom) the use of the multi-scale approach. Recall that the choice $\beta=0$ corresponds to the classic Horn-Schunck method while $\beta > 0$ corresponds to our proposed method. Note that for $\beta \neq 0$ the general tendency of optical flow methods to visualize only movement on the boundary is counteracted. }
	\label{fig_squares_speckle_recon}
\end{figure}

\subsubsection{Simulated Elastography Setup}\label{sect_sim_exp}

The second test problem which we now consider is designed to mimic an experimental elastography setup. We start with the rectangular sample with a circular inclusion depicted in   Figure~\ref{fig_sim_exp_sample_field} (left), which we assume to be fixed on the bottom and freely moving on the sides. We apply a constant downward displacement of $10\%$ of its size (corresponding to $20$ pixels) on the top, which results in the deformed sample depicted in Figure~\ref{fig_sim_exp_sample_field} (middle). The corresponding displacement field, depicted in Figure~\ref{fig_sim_exp_sample_field} (right), which was used to deform the sample, was computed by solving the equations of linearized elasticity \eqref{eq_elast_lin}, with the Lam\'e parameters $\lambda$ and $\mu$ as in Figure~\ref{fig_sim_exp_exact}.

\begin{figure}[ht!]
	\centering
	\includegraphics[height=0.18\textheight, clip = true, trim={1.5cm 1cm 2cm 0cm}]{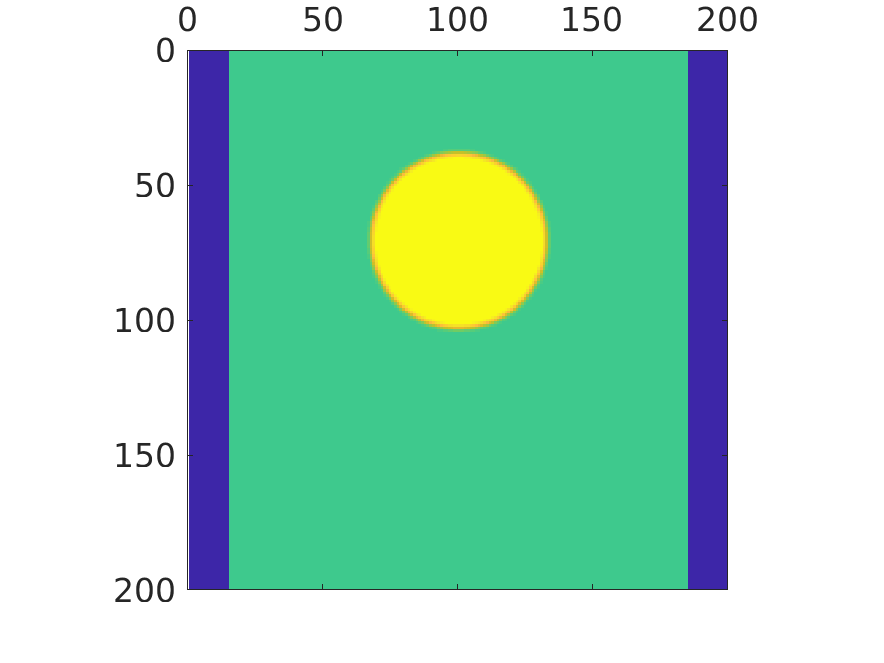}
	\quad
	\includegraphics[height=0.18\textheight, clip = true, trim={1.5cm 1cm 2cm 0cm}]{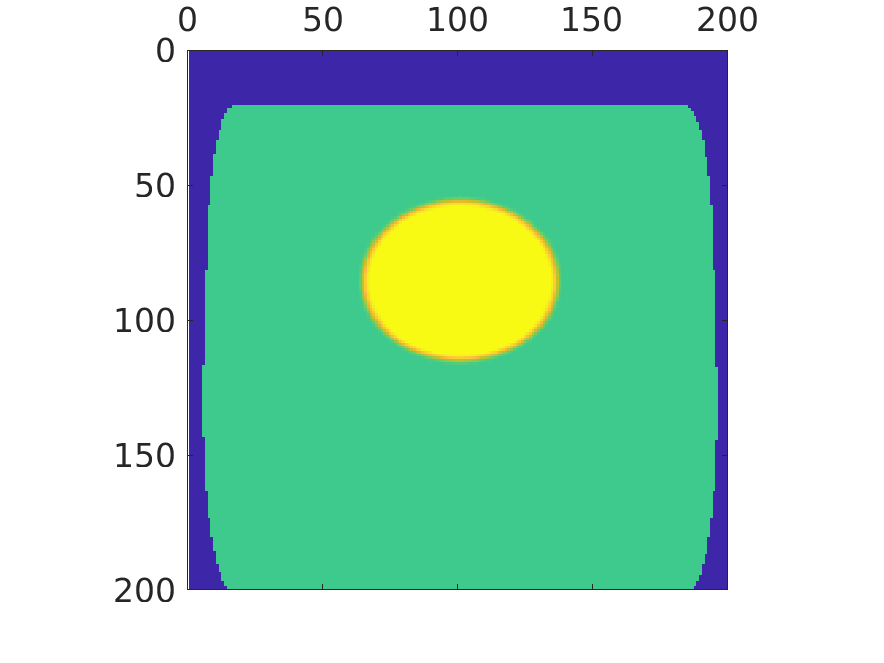}
	\quad
	\includegraphics[height=0.18\textheight, clip = true, trim={0.5cm 1cm 1cm 0cm}]{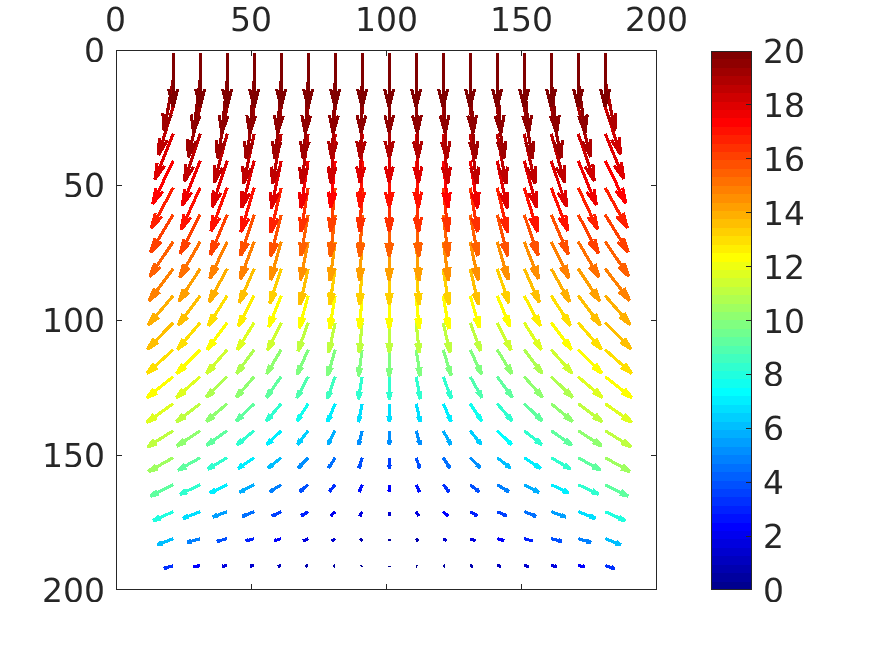}
	\caption{Simulated rectangular sample with a circular inclusion, before (left) and after (middle) compression, together with the corresponding displacement field (right).}
	\label{fig_sim_exp_sample_field}
\end{figure}

In order to simulate two OCT measurements, $200$ randomly distributed bubbles were added to the sample and the resulting images before and after compression were rescaled to the interval $[0,1]$. The movement of the bubbles between the images was also determined by the computed displacement field. The resulting simulated ``OCT images'' together with the corresponding bubble displacement vectors $\hat{\uf}^i$ are depicted in Figure~\ref{fig_sim_exp_speckle_field}.

\begin{figure}[ht!]
	\centering
	\includegraphics[height=0.16\textheight, clip = true, trim={0.5cm 1cm 1cm 0cm}]{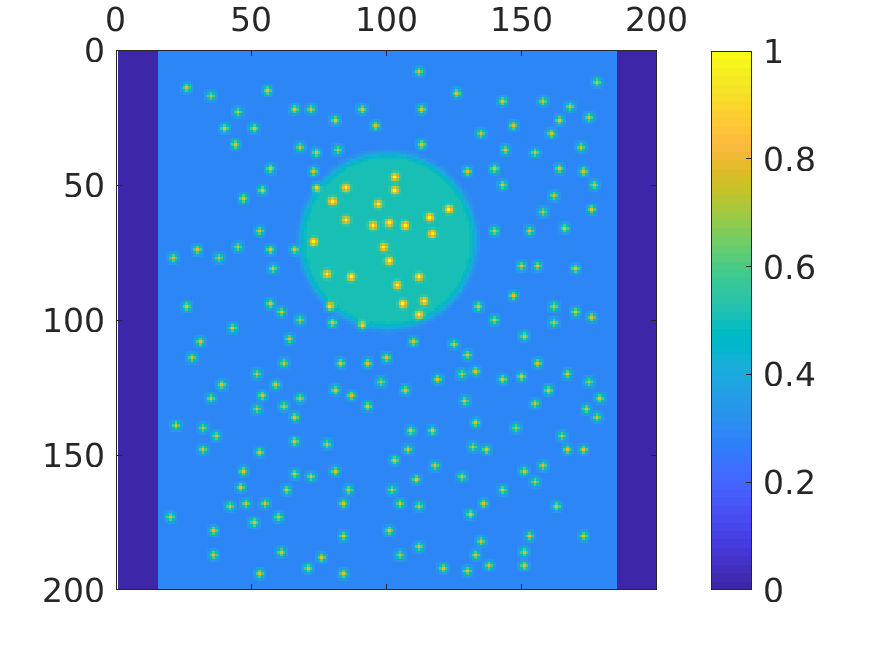}
	\quad
	\includegraphics[height=0.16\textheight, clip = true, trim={0.5cm 1cm 1cm 0cm}]{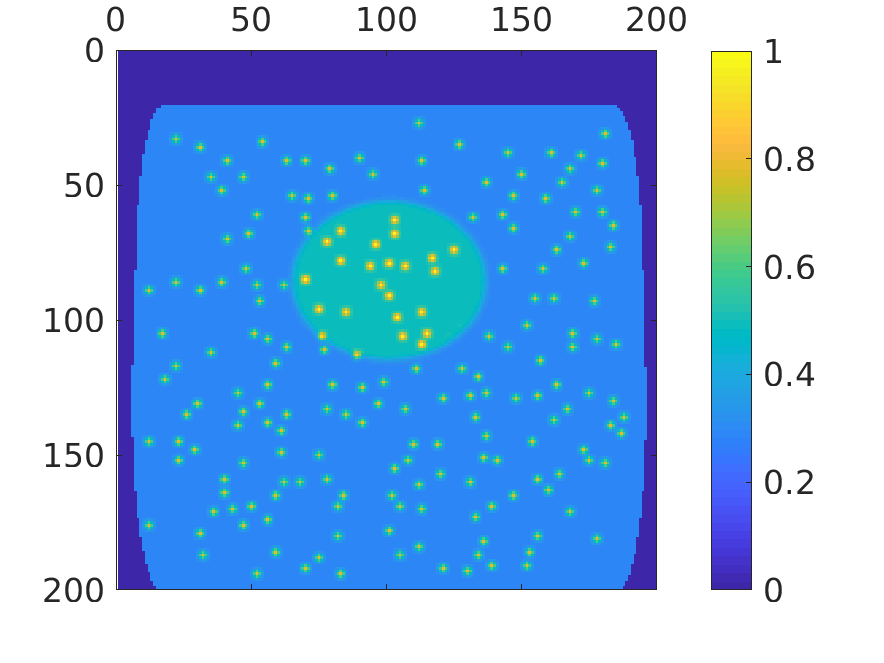}
	\quad
	\includegraphics[height=0.16\textheight, clip = true, trim={0.5cm 1cm 1cm 0cm}]{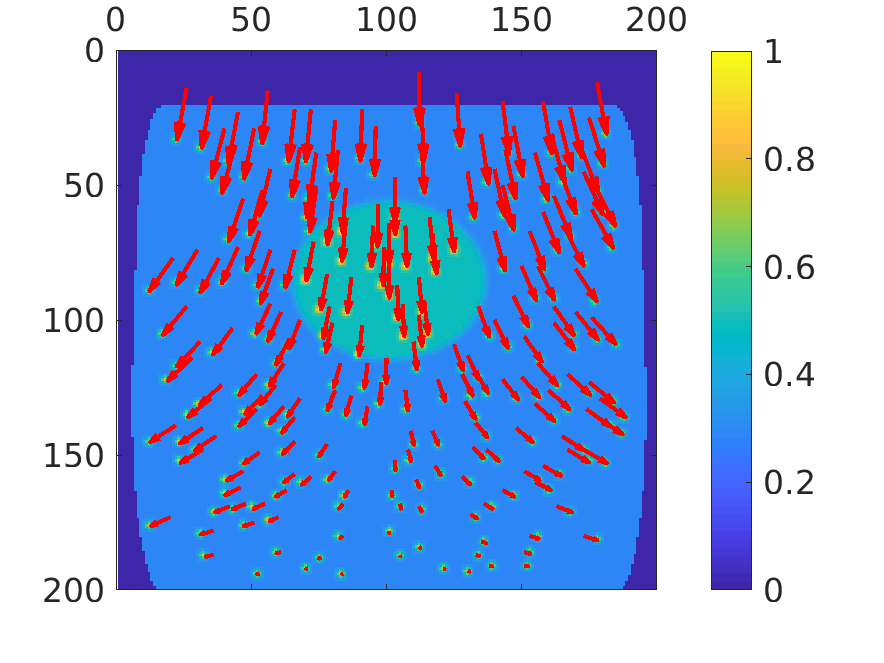}
	\caption{Simulated OCT images of the sample with added bubbles, before (left) and after (middle) compression, together with the bubble displacement vectors $\hat{\uf}^i$ (right).}
	\label{fig_sim_exp_speckle_field}
\end{figure}

In order to reconstruct the displacement field from the simulated OCT images, we employ our estimation procedure using the multi-scale approach. We chose the same parameter as for the moving squares example above (in particular $\beta = 4$), except for the spatial regularization parameter $\alpha$, for which we now use the slightly larger choice of $\alpha =4$ compared to $\alpha=0.8$ in the previous test. The reconstructed displacement field is depicted in  Figure~\ref{fig_sim_exp_fields} (left). Note that, similarly to the exact displacement field depicted in Figure~\eqref{fig_sim_exp_sample_field} (right), this is only a sparse representation of the actual field, which in reality is eight times denser in each direction. For a higher resolution image of the reconstructed displacement field see Figure~\ref{fig_sim_exp_fields} (middle). As can for example be seen from Figure~\ref{fig_sim_exp_fields} (right), the reconstructed field agrees well with the exact field, in particular in the internal part of the sample. The total relative error is $11.53\%$, with the relative error in the $x$ and $y$ components being $9.2\%$ and $6.96\%$, respectively. As the numerical examples in Section~\ref{sect_numerics_II} below show, this is sufficiently accurate to obtain quantitative material parameter reconstructions.


\begin{figure}[ht!]
	\centering
	\includegraphics[height=0.16\textheight, clip = true, trim={0.5cm 1cm 1cm 0cm}]{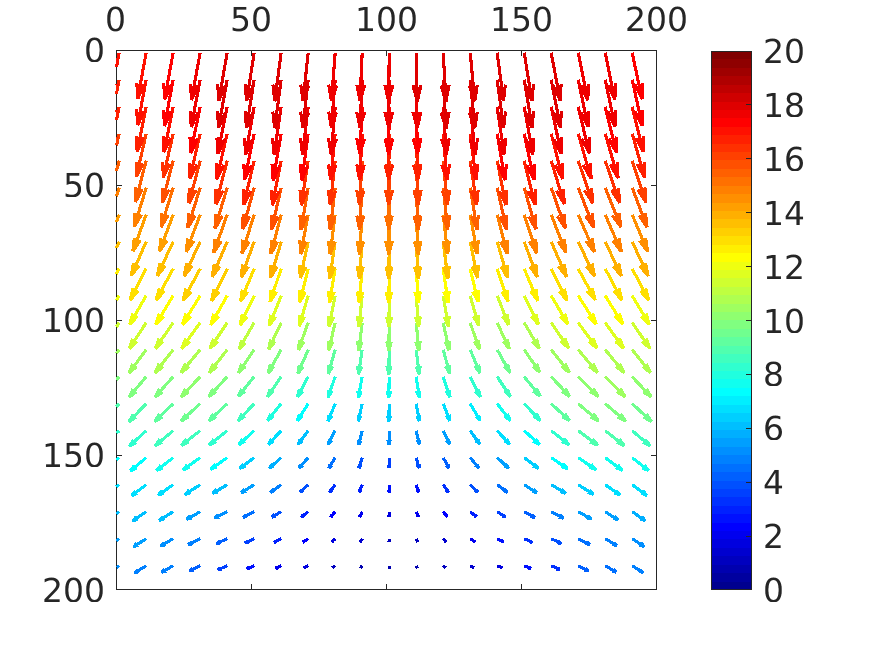}
	\quad
	\includegraphics[height=0.16\textheight, clip = true, trim={1.5cm 1cm 2.5cm 0.5cm}]{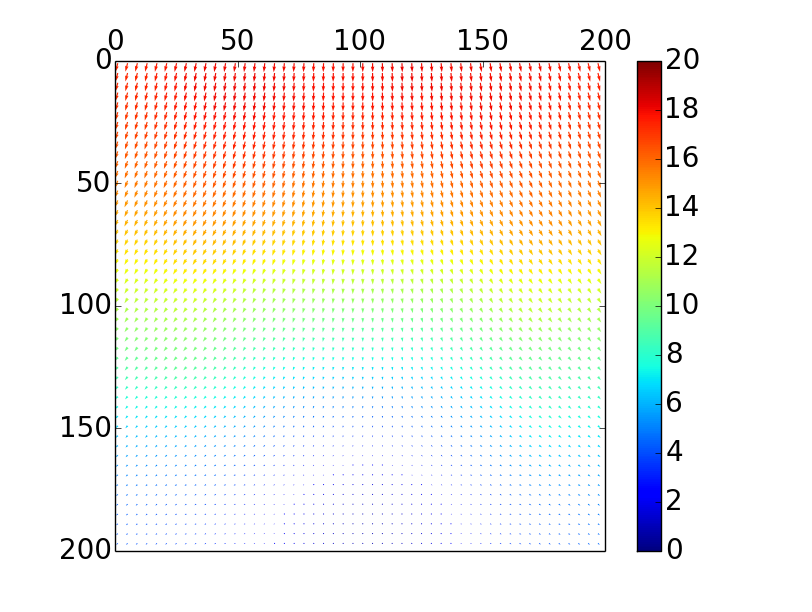}
	\quad
	\includegraphics[height=0.16\textheight, clip = true, trim={1.5cm 1cm 1cm 0cm}]{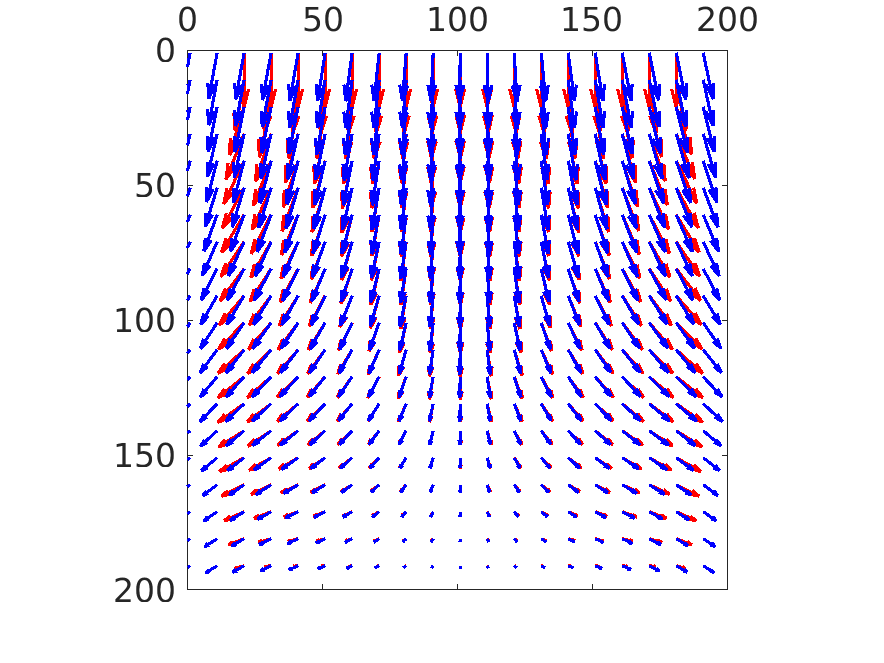}
	\caption{Estimated displacement field in low (left) and high (middle) resolution. Comparison of the exact (red) and the estimated (blue) displacement field (right).}
	\label{fig_sim_exp_fields}
\end{figure}

\subsection{Experimental Data}

After testing the proposed algorithm on simulated data, we now move to experimental data. We consider the volumetric OCT data whose rotational maximum-intensity projection (MIP) was already depicted in Figure~\ref{fig_OCT_speckle} in the introduction. This data results from the consequent OCT imaging of a homogeneous sample which was compressed from above by a micrometer screw gauge. The sample itself is made from silicone rubber and is rotationally symmetric. 

As a first test, we apply the bubble tracking algorithm presented in Section~\ref{sect_bubble} to the two OCT images shown in Figure~\ref{fig_OCT_speckle}, the results of which are depicted in Figure~\ref{fig_OCT_speckle_tracking}. In the left two images one can see the detected bubbles (circled in black) in the two images before and after compression. The threshold value in the algorithm was set so that only the bubbles of highest brightness would be detected. The right image depicts the resulting displacement vectors after the correspondence between the bubbles in the two images has been established by the algorithm. A visual inspection shows that the motion of the bubbles was reliably captured by our proposed tracking algorithm.

\begin{figure}[h!] 
	\centering
	\includegraphics[width=0.30\textwidth, clip=true, trim={12cm 2cm 10cm 0cm}]{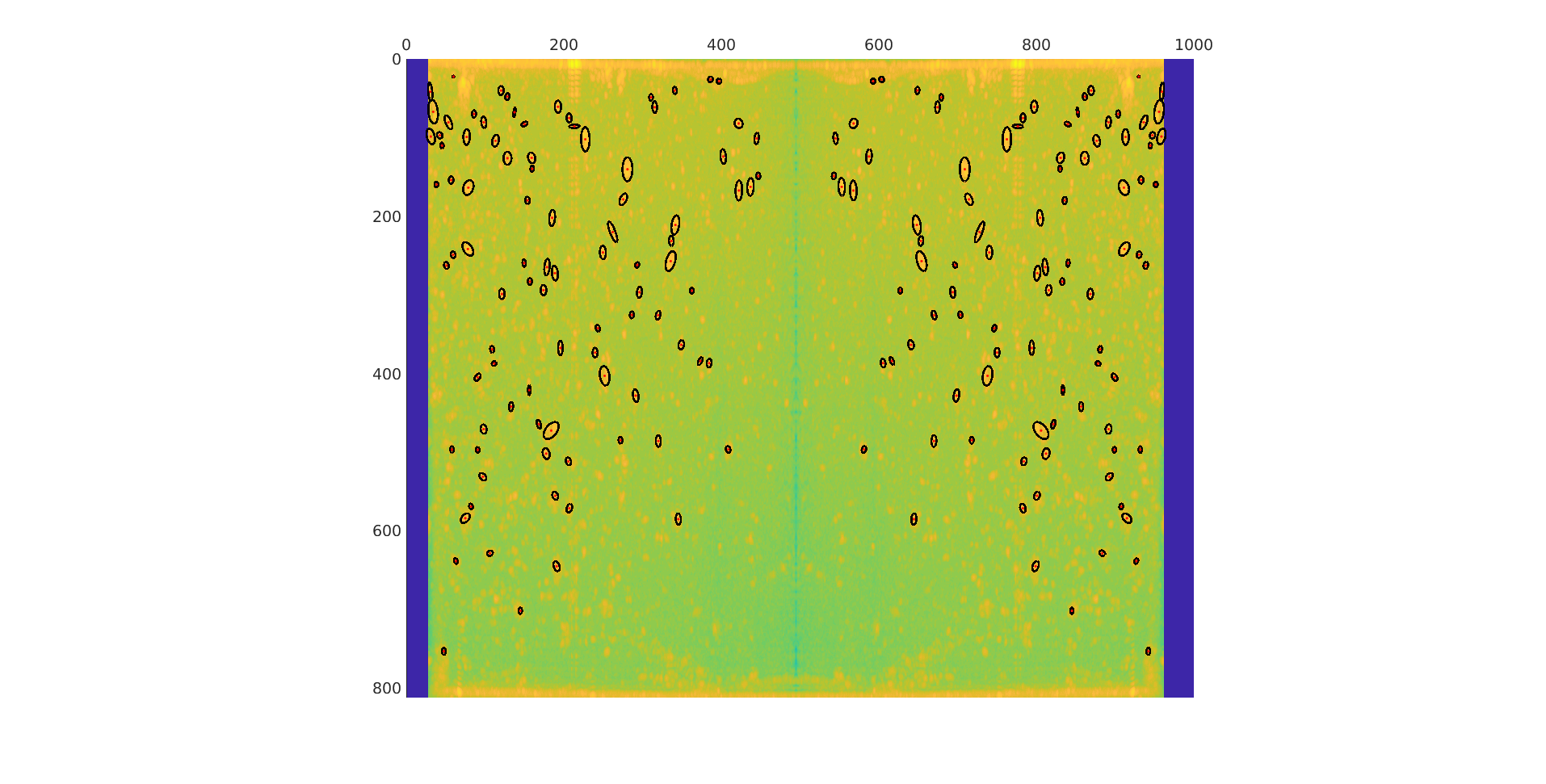}
	\quad
	\includegraphics[width=0.30\textwidth, clip=true, trim={12cm 2cm 10cm 0cm}]{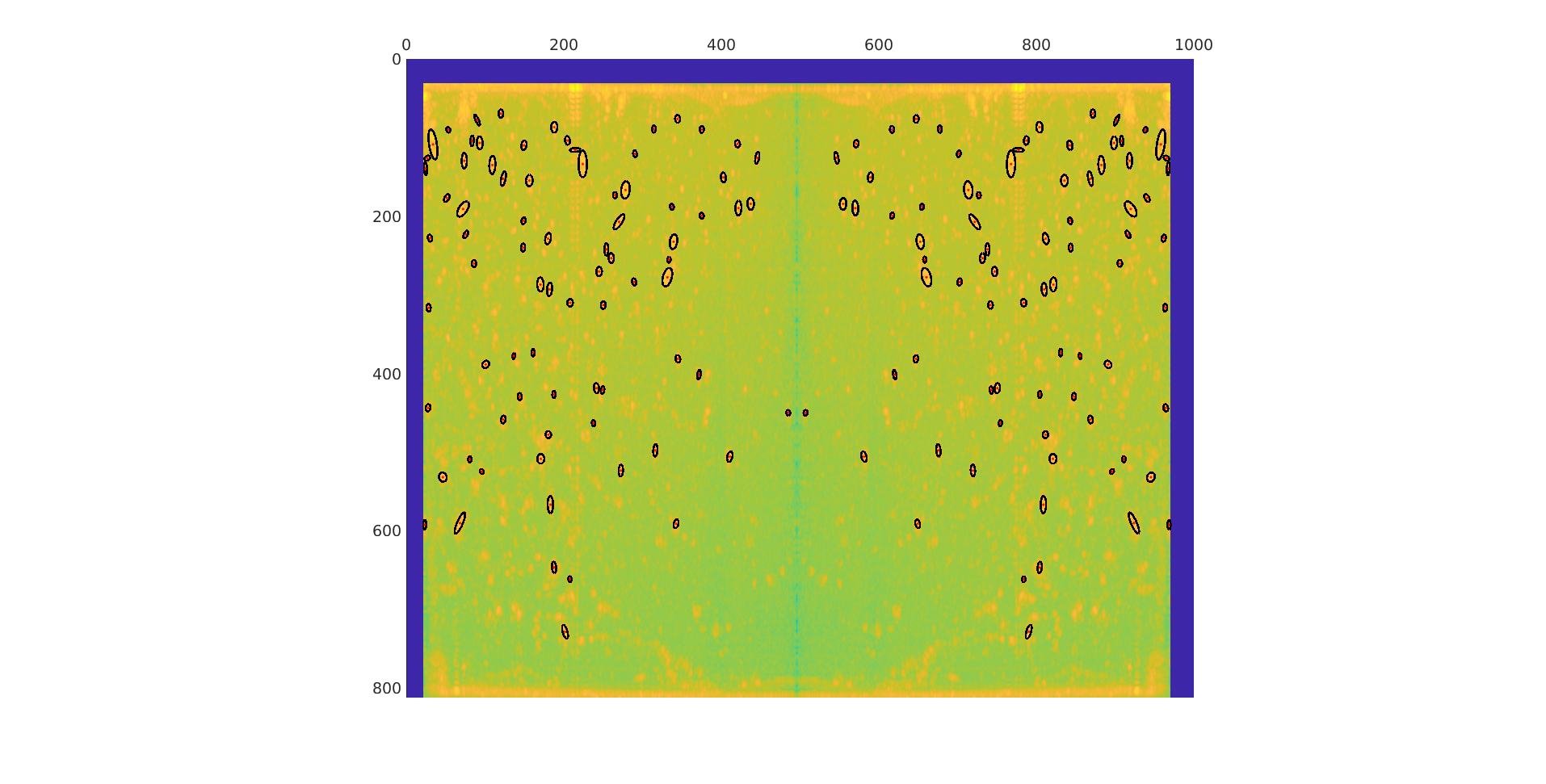} 
	\quad
	\includegraphics[width=0.3\textwidth, clip=true, trim = {12cm 2cm 10cm 0cm}]{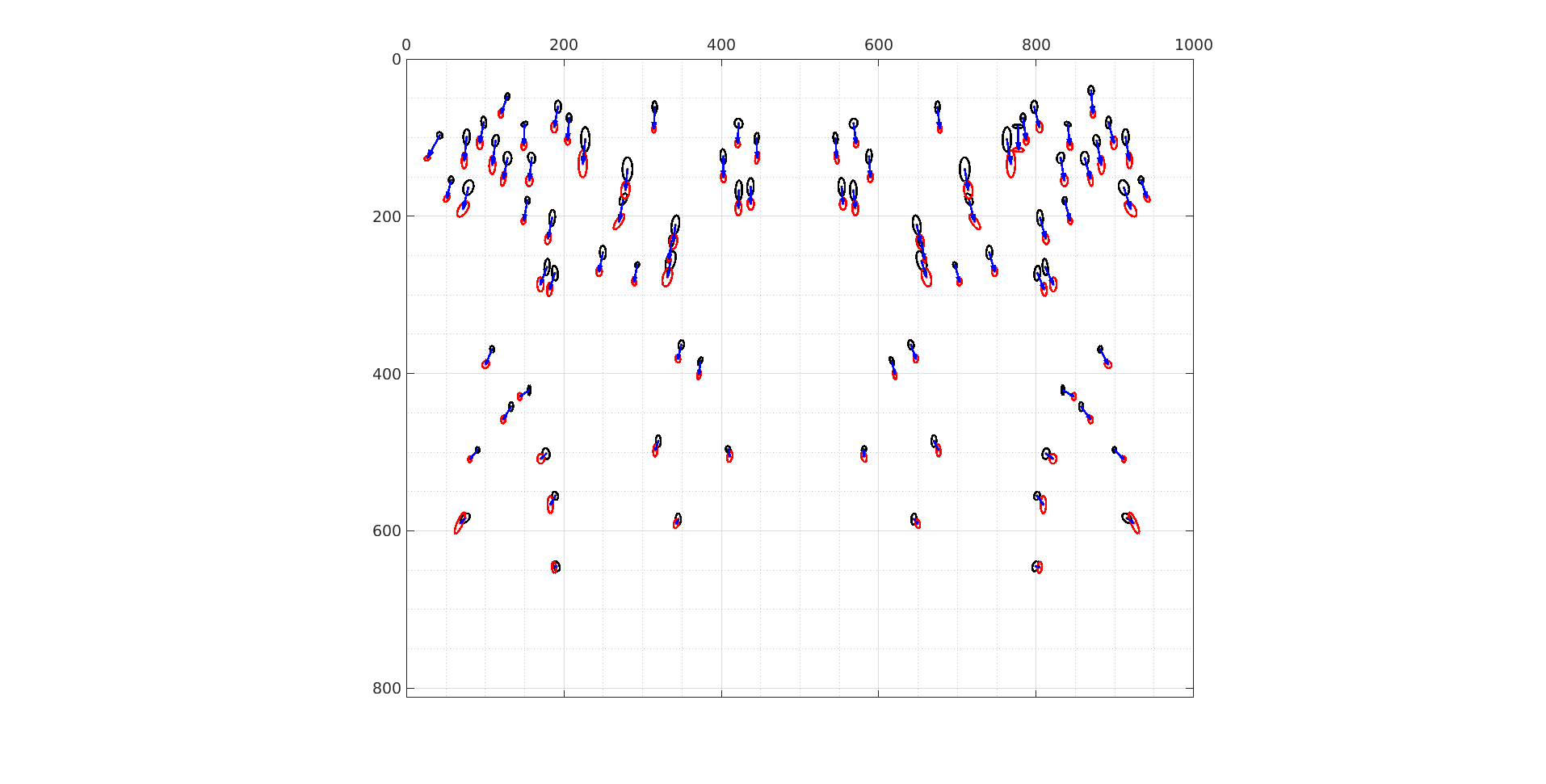}
	\caption{Bubbles (circled in black) detected in the OCT images before compression (left) and after compression (middle) using the approach of Section~\ref{sect_bubble}, together with the resulting bubble displacement (right). Rotational maximum intensity projections. }
	\label{fig_OCT_speckle_tracking}
\end{figure}

Next, we apply our proposed algorithm to the complete 3D volumetric OCT data set, and obtain the results depicted in Figure~\ref{fig_OCT_displacement_rotation}. Many of the brightest bubbles were identified and tracked, resulting in a set of displacement vectors, which by themselves already form a physically plausible displacement field. In fact, using the rotational symmetry of the sample, and after applying a suitable interpolation, a complete displacement field can be obtained, which is already good enough for many further applications.

\begin{figure}[h!] 
	\centering
	\includegraphics[width=0.45\textwidth, clip=true, trim = {10cm 1cm 10cm 2cm}]{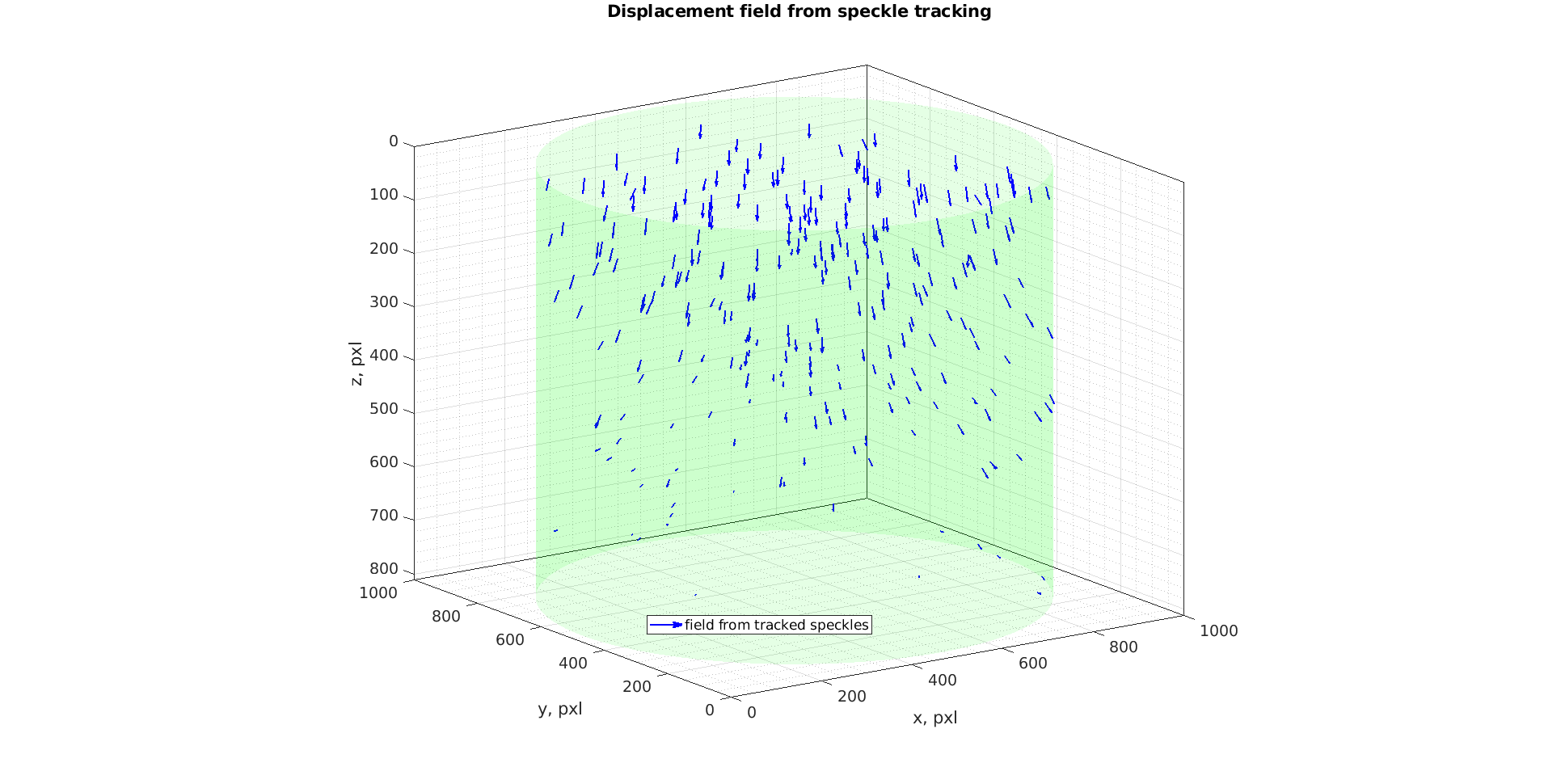}
	\qquad
	\includegraphics[width=0.45\textwidth, clip=true, trim = {10cm 1cm 12cm 1cm}]{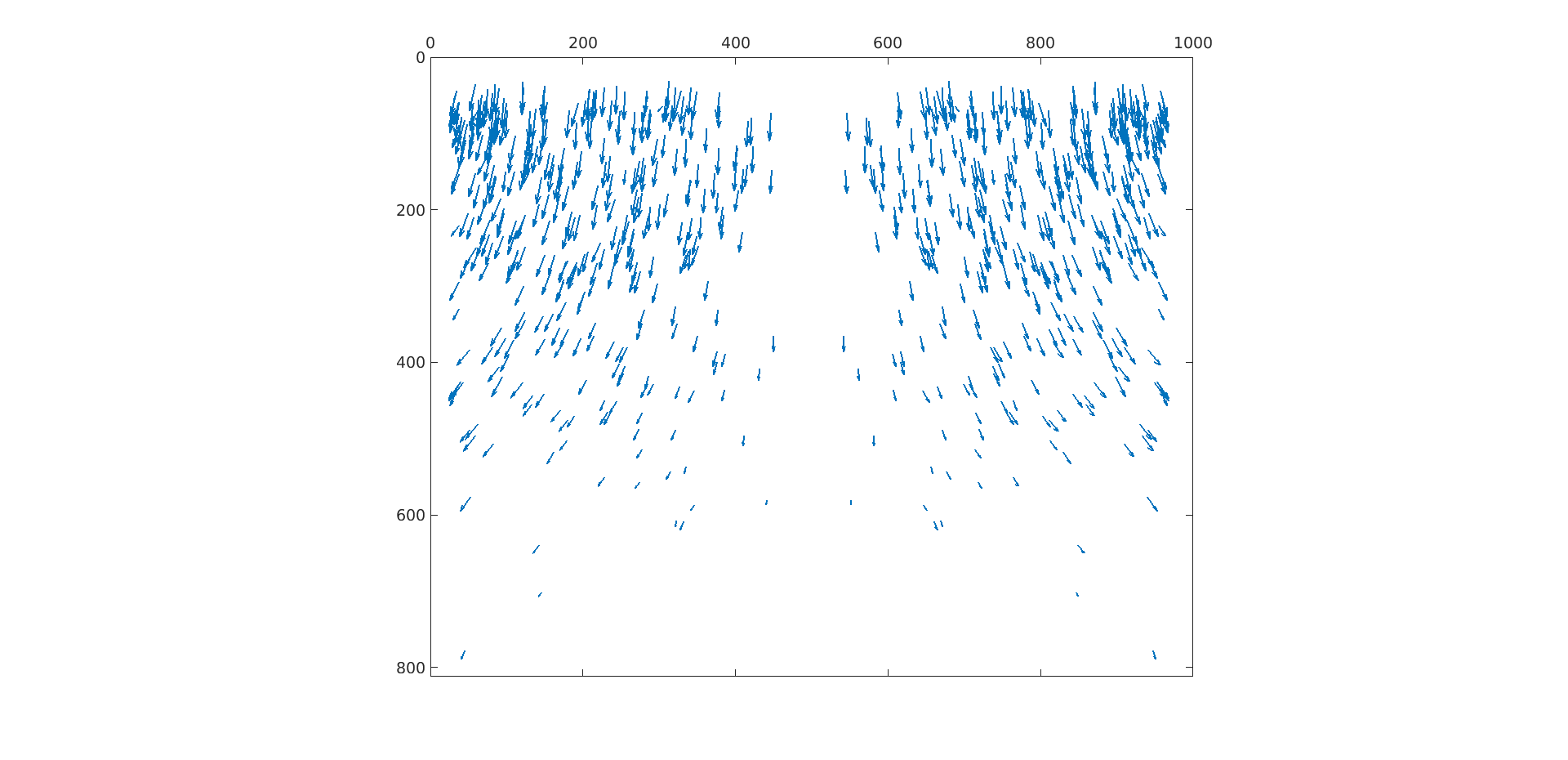}
	\caption{Tracked bubbles (left) and rotational projection (right).} 
	\label{fig_OCT_displacement_rotation}
\end{figure}

We now apply our displacement field estimation algorithm to the images depicted in Figure~\ref{fig_OCT_speckle}, using the same settings and parameters as for the simulated data considered in the previous section, but now with $6$ instead of $5$ layers in the multi-scale approach, since the images are of a higher resolution. When utilizing the bubble information, we use the displacement vectors of all $672$ tracked bubbles depicted in Figure~\ref{fig_OCT_displacement_rotation}.

The resulting displacement fields are depicted in Figure~\ref{fig_OCT_displacement_methods_comp}, again in the four different combinations of utilizing/not utilizing the bubble information, and using/not using the multi-scale approach. Once more it is apparent that utilizing the bubble information greatly improves the quality of the reconstructed displacement fields. While basically no correct displacement fields could be obtained with the standard Horn-Schunck method ($\beta = 0$), when including the bubble information we obtain physically meaningful fields that fit to expectations also in magnitude of the displacement. The positive influence of the multi-scale approach, although not too pronounced in this case, again shows itself in the increased uniformity of the obtained field.

Finally, we investigate the influence of the parameter $\alpha$ on the reconstruction. Figure~\ref{fig_OCT_displacement_alpha_comp} depicts the obtained results for the exact same setting and parameters as before, but now for two different values of $\alpha$. As one may expect, a higher value of $\alpha$ produces a smoother and more uniform displacement field, which is useful in physical situations where one expects the internal displacement field to be smooth anyways.

\begin{figure}[h!] 
	\centering
	\includegraphics[width=0.45\textwidth, clip = true, trim={11cm 3cm 10cm 2cm}]{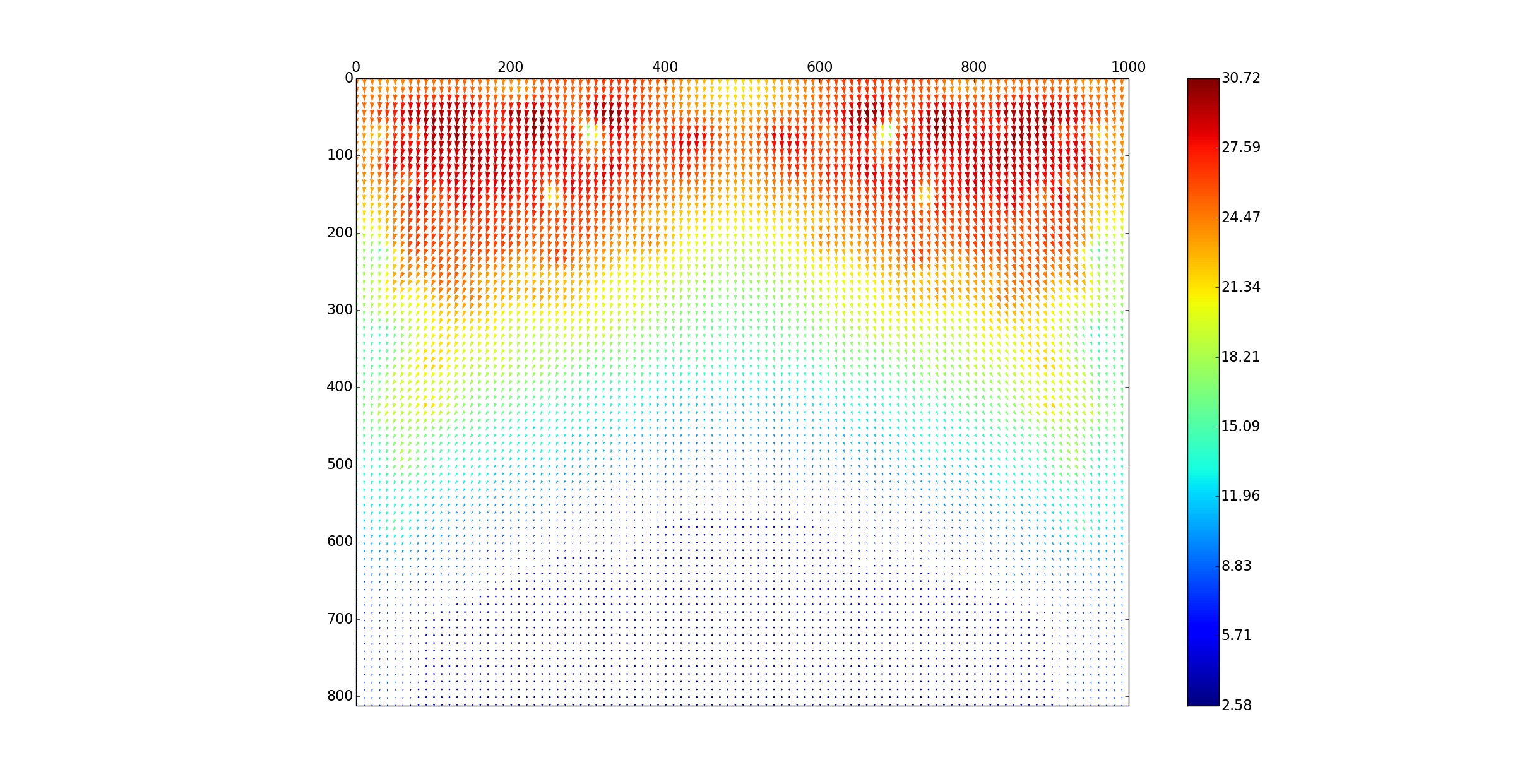}
	\qquad
	\includegraphics[width=0.45\textwidth, clip = true, trim={11cm 3cm 10cm 2cm}]{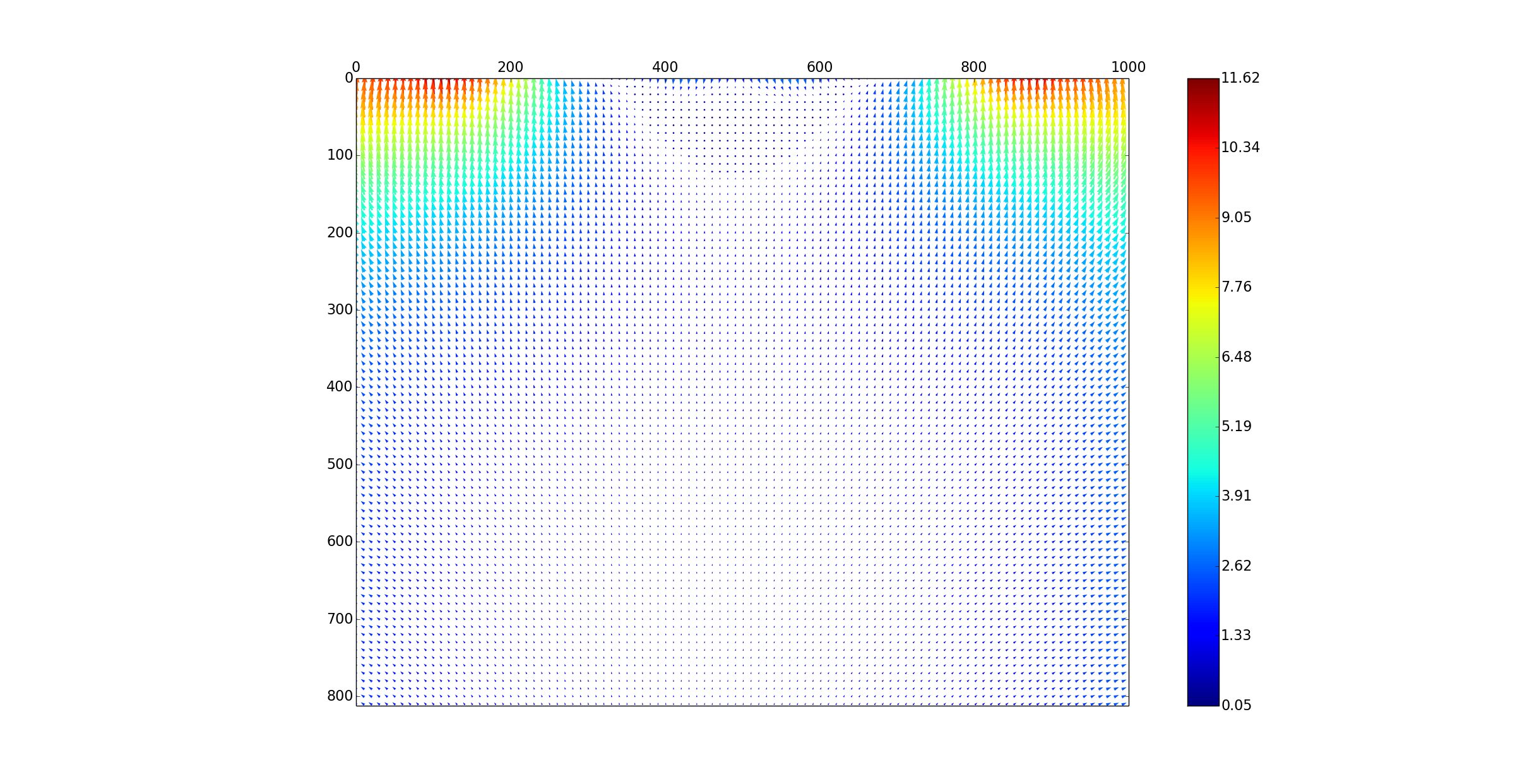}
	\\ \vspace{10pt}
	\includegraphics[width=0.45\textwidth, clip = true, trim={11cm 3cm 10cm 2cm}]{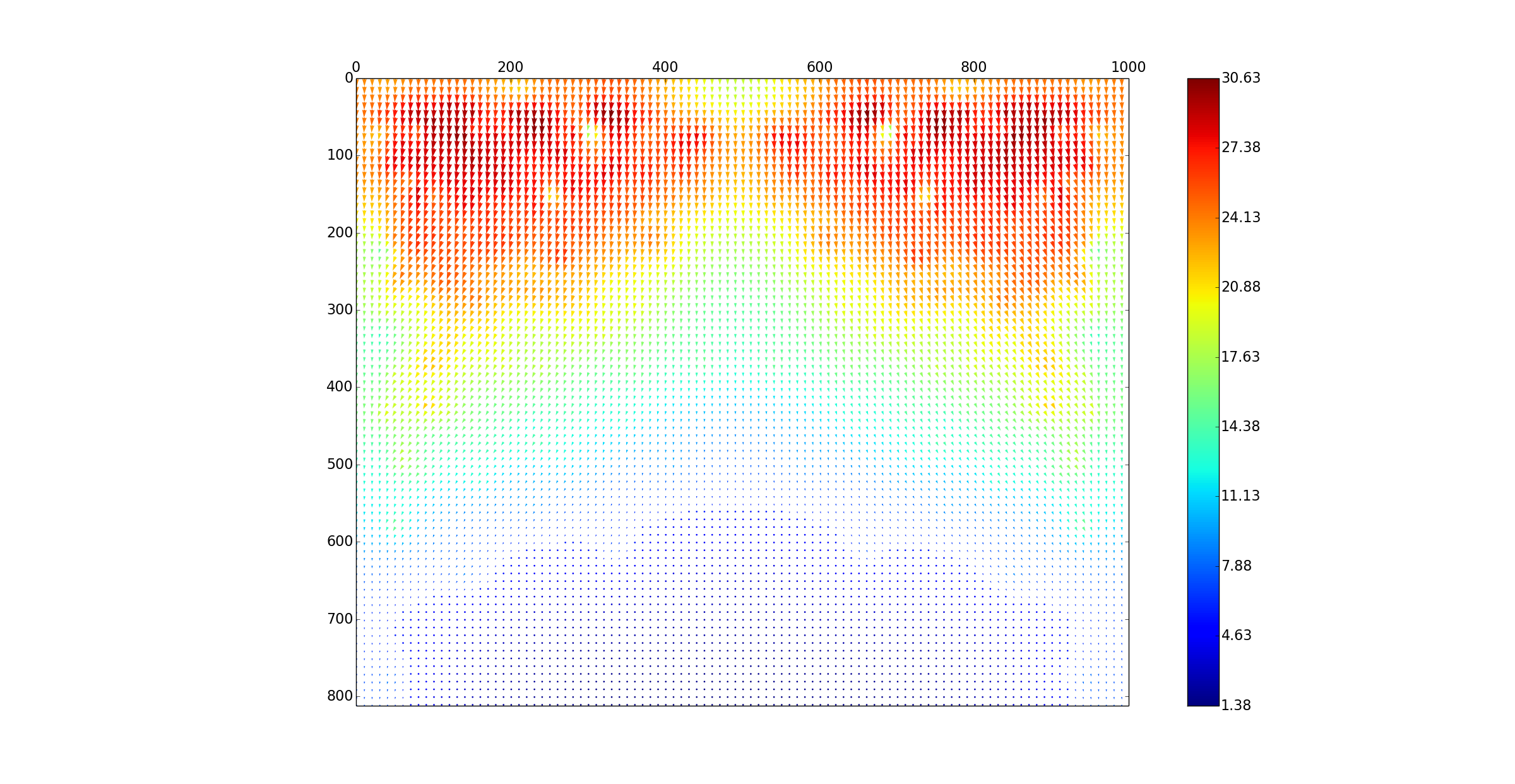}
	\qquad
	\includegraphics[width=0.45\textwidth, clip = true, trim={11cm 3cm 10cm 2cm}]{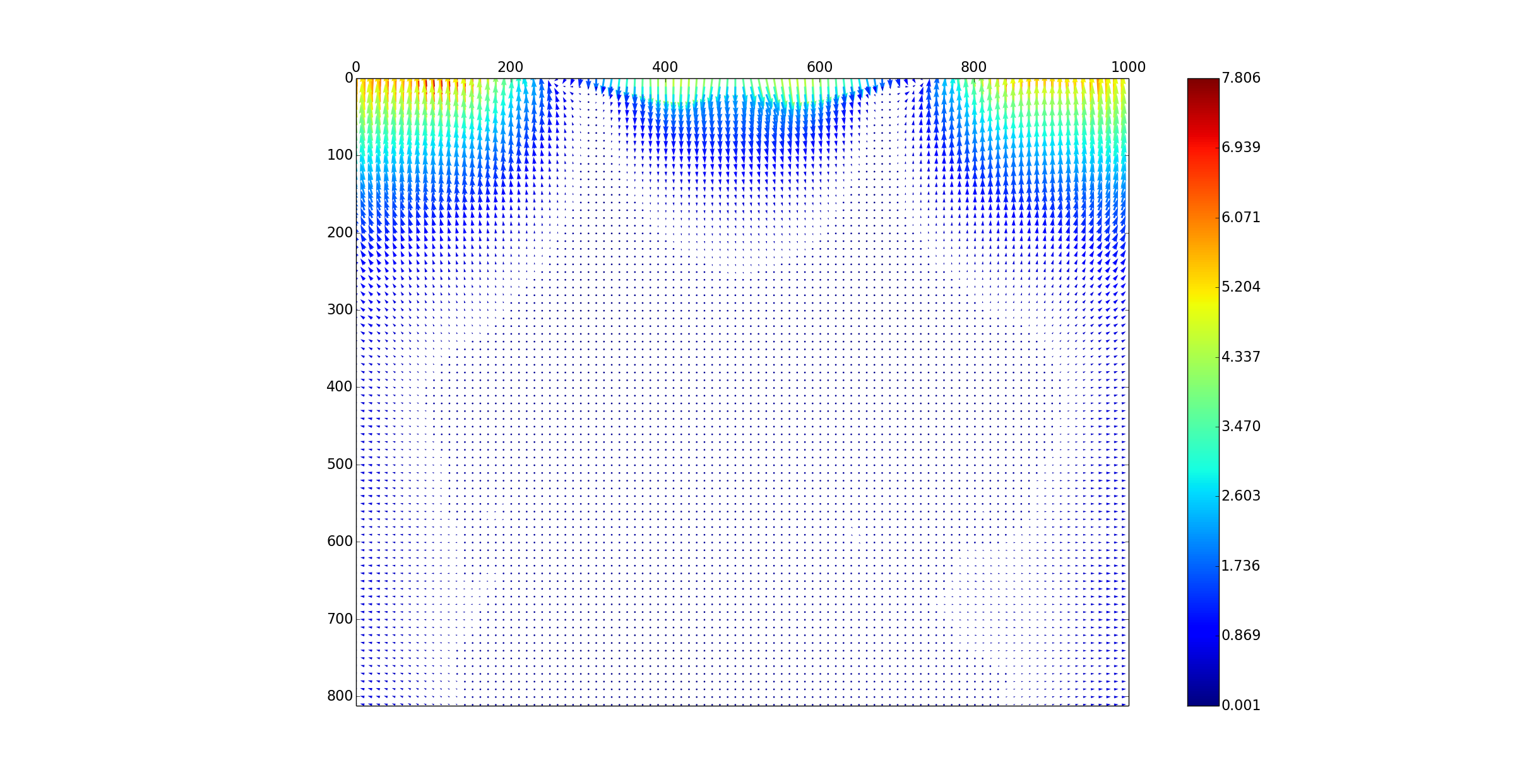}
	\caption{Estimated displacement field for the choices $\beta = 4$ (left) and $\beta = 0$ (right), both with (top) and without (bottom) the use of the multi-scale approach.} 
	\label{fig_OCT_displacement_methods_comp}
\end{figure}

\begin{figure}[h!] 
	\centering
	\includegraphics[width=0.45\textwidth, clip = true, trim={11cm 3cm 10cm 2cm}]{Figures/exp_mult_speckle}
	\qquad
	\includegraphics[width=0.45\textwidth, clip = true, trim={11cm 3cm 10cm 2cm}]{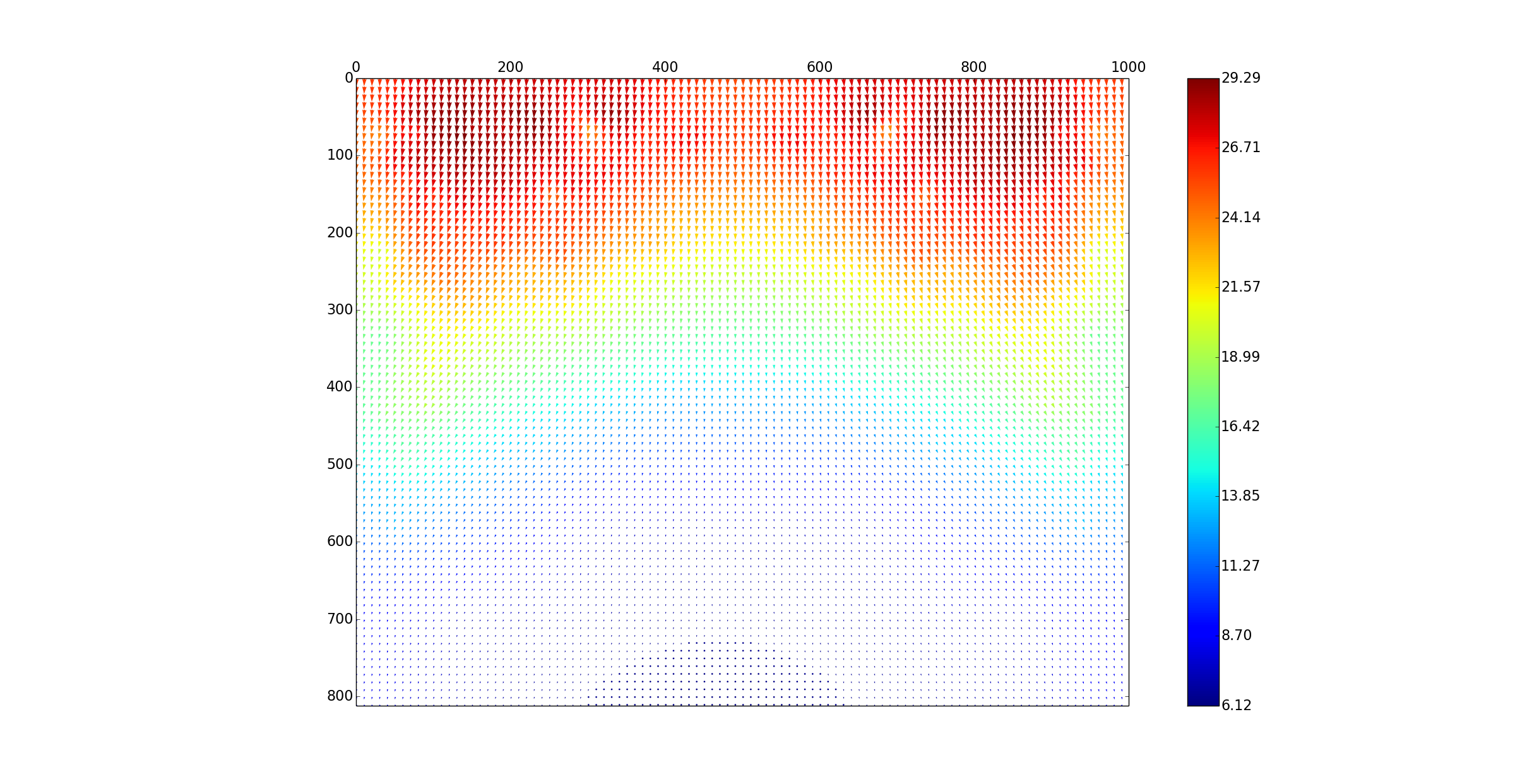}
	\caption{Estimated displacement fields for $\alpha = 0.8$ (left) and $\alpha = 2$ (right).} 
	\label{fig_OCT_displacement_alpha_comp}
\end{figure}

\section{Application to Optical Coherence Elastography}\label{sect_application}

In this section, we consider the application of our proposed displacement field estimation approach to quantitative material parameter estimation via optical coherence elastography. In particular, we are interested in obtaining quantitative estimates of the Young's modulus $E$ of a sample from two OCT measurements before and after compression. The Young's modulus is connected to the Lam\'e parameters $\lambda$ and $\mu$ via
\begin{equation}\label{def_Young}
	E = \frac{\mu(3 \lambda + 2\mu)}{\lambda + \mu} \,,
\end{equation}
whose estimation by iterative regularization methods we recently considered from both an analytical and a numerical viewpoint  in \cite{HubSheNeuSch18}. Note that estimating the Lam\'e parameters $\lambda$ and $\mu$ from a single displacement field in general only allows quantitative estimates of $\mu$, but not of $\lambda$. Reconstructions also of the parameter $\lambda$ were obtained in \cite{HubSheNeuSch18} from simulated data with the same iterative regularization approach as the one presented below, but for Lam\'e parameters with a different contrast and magnitude than the ones considered here. However, the Young's modulus $E$ is not very sensitive to $\lambda$ and thus, as we see below, quantitative estimates of $E$ can still be obtained. If one also wanted to obtain quantitative reconstructions of the parameter $\lambda$, one could for example combine the data resulting from multiple elastography experiments conducted on the same sample, each with a different applied displacement.

In the following, we first restate the mathematical model of linearized elasticity and the corresponding inverse problem of estimating the Lam\'e parameters from a measured displacement field. Afterwards, we recall the iterative regularization approach used in \cite{HubSheNeuSch18} for solving this inverse problem, which we then employ in Section~\ref{sect_numerics_II} to obtain material parameter reconstructions for the simulated and the experimental sample from the displacement field estimations obtained in Section~\ref{sect_numerics} above. Note that of course also other reconstruction methods could be used to extract the Lam\'e parameters from our proposed displacement estimation approach.

\subsection{Mathematical Model of Linearized Elasticity}

Mathematical models linking the material parameters of a sample to the internal and external forces present in an elastography experiment are commonly derived from the equations of motion \cite{Doy12,WidSch15}. Assuming that the sample is linear and isotropic, and that the deformation which the sample undergoes during the experiment is comparatively small, then the model of \emph{linearized elasticity} can be used. It can for example be supplied with mixed Dirchlet and Neumann boundary conditions to model applied displacement and surface traction, respectively.

Mathematically, let $\Omega$ be a non-empty, bounded, open, connected set in $\R^N$, for $N=1,2,3$, with a Lipschitz continuous boundary $\partial \Omega$, which has two subsets $\Gamma_D$ and $\Gamma_T$ such that $\partial \Omega = \overline{\Gamma_D \cup \Gamma_T}$, $\Gamma_D \cap \Gamma_T=\emptyset$ and $\text{meas}(\Gamma_D)>0$. Given body forces $\mathbf{f}$, displacement data $\mathbf{g}_D$, surface traction $\mathbf{g}_T$ and Lam\'e parameters $\lambda$ and $\mu$, the (forward) problem of linearized elasticity with displacement-traction boundary conditions consists in finding the displacement field $\uf$ satisfying
\begin{equation}\label{eq_elast_lin}
	\begin{split}
		-\div{\sigma(\uf)} & = \mathbf{f} \,, \qquad \text{in} \; \Omega \,, \\
		\uf |_{\Gamma_D}   & = \mathbf{g}_D \,, \\
		\sigma(\uf) \vec{n} |_{\Gamma_T} & = \mathbf{g}_T \,,
	\end{split}
\end{equation}
where $\vec{n}$ is the outward unit normal vector of $\partial \Omega$, and the stress tensor $\sigma$ defining the stress-strain relation in $\Omega$ is defined by
\begin{equation*}
	\sigma(\uf):=\lambda \div{\uf} I + 2\mu \mathcal{E}(\uf) \,,
\end{equation*}
where $I$ is the identity matrix and $\mathcal{E}$ is the strain tensor defined by
\begin{equation*}
	\mathcal{E}(\uf) := \frac{1}{2}\left(\nabla \uf + \nabla \uf^T \right) \,.
\end{equation*}
One typically considers the following weak formulation of this problem \cite{Bra07,Cia94,HubSheNeuSch18}: Find a function  $\uf \in V :=H^1_{0,\Gamma}(\Omega)^N$, where $H^1_{0,\Gamma}(\Omega) := \{\uf \in \HoO \, | \, \uf|_{\Gamma_D} =0 \}$, which satisfies
\begin{equation}\label{eq_variat_form}
	a_{\lambda,\mu}(\uf,\vf) = l(\vf)-a_{\lambda,\mu}(\mathbf{\Phi},\vf)\,, \qquad \forall\vf\in V\,,
\end{equation}
where $\mathbf{\Phi}$ is a homogenization function satisfying $\mathbf{\Phi}|_{\Gamma_D}=\mathbf{g}_D$, and the bilinear form $a_{\lambda,\mu}(\cdot,\cdot)$ and the linear form $l(\cdot)$ are given by 
\begin{equation*}
	\begin{split}
		a_{\lambda,\mu}(\uf,\vf) &:=\intVx{\left(\lambda \div{\uf}\div{\vf} + 2\mu \, \mathcal{E}(\uf):\mathcal{E}(\vf)\right)}\,,
		\\
		l(\vf)&:=\spr{\mathbf{f},\vf}_{H^{-1}(\Omega),\HoO} + \spr{\mathbf{g}_T,\vf}_{H^{-\frac{1}{2}}(\Gamma_T),H^{-\frac{1}{2}}(\Gamma_T)} \,.
	\end{split}
\end{equation*}
If for example $\mathbf{f}\in H^{-1}(\Omega)^{N}$, $\mathbf{g}_D \in H^{\frac{1}{2}}(\Gamma_D)^N$, $\mathbf{g}_T \in H^{-\frac{1}{2}}(\Gamma_T)^{N}$, and $\mathbf{\Phi} \in \HoO^N$, then the variational problem \eqref{eq_variat_form} admits a unique solution \cite{Bra07,Cia94,HubSheNeuSch18}.

\subsection{The Inverse Problem}

After considering the forward problem of linearized elasticity in the previous section, we now turn our attention to the inverse problem, which consists in determining the Lam\'e parameters $\lambda$ and $\mu$ from measurements of the displacement field $\uf$. Introducing for $s>N/2+1$ the \emph{parameter-to-solution map} $F$, i.e., the nonlinear operator
\begin{equation*}
	\begin{split}
		F:\mathcal{D}(F):=\{(\lambda,\mu)\in H^s(\Omega)^2 \, \vert \, \lambda\ge 0, \mu \ge \underline{\mu} >0 \} & \rightarrow \LtO^N\,, \\
		(\lambda,\mu) & \mapsto \uf(\lambda,\mu) \,,
	\end{split}
\end{equation*}
where $\uf$ is the solution of the variational problem \eqref{eq_variat_form}, the inverse problem can be formulated as the problem of solving the nonlinear operator equation
\begin{equation}\label{eq_operator_eq}
	F(\lambda,\mu) = \uf \,.
\end{equation}
In practical applications, instead of the true displacement field $\uf$ one only has access to noisy data/measurements $\uf^\delta$, which for example satisfy the estimate 
\begin{equation}\label{cond_noise}
	\norm{\uf - \uf^\delta}_{\LtO} \le \delta \,,
\end{equation}
where $\delta \ge 0$ denotes the noise level. Since the inverse problem is ill-posed and thus very sensitive even to a small amount of noise in the data, methods for obtaining (approximate) solutions of \eqref{eq_operator_eq} need to be carefully chosen. One such choice, an iterative regularization approach based on Landweber iteration, which was used for obtaining the numerical results presented in Section~\ref{sect_numerics_II}, is discussed below.

\subsection{Iterative Regularization Approach}\label{sect_it_reg_approach}

One of the most well known iterative regularization methods for solving nonlinear inverse problems is \emph{Landweber iteration} \cite{EngHanNeu96,KalNeuSch08}, given by
\begin{equation}\label{eq_Landweber}
	(\lambda_{k+1},\mu_{k+1}) = (\lambda_{k},\mu_{k}) - \omega_{k}^\delta F'(\lambda_{k},\mu_{k})^*(F(\lambda_{k},\mu_{k})-\uf^\delta)\,,
\end{equation}
where $k$ is the iteration index and $\omega_{k}^\delta$ is a sequence of stepsizes. As a stopping rule one typically uses the \emph{discrepancy principle}, which determines the stopping index $k_*$ as the smallest index such that 
\begin{equation}\label{eq_discrepancy}
	\norm{F(\lambda_{k},\mu_{k})-\uf^\delta} \leq \tau \delta\,,
\end{equation}
where $\delta$ is as in \eqref{cond_noise} and $\tau > 1$ is a constant. For the stepsizes $\omega_{k}^\delta$ one can for example use a suitably chosen constant, the minimal error, or the steepest descent stepsize \cite{Sch96}, the latter one being given by
\begin{equation}\label{def_okd}
	\omega_{k}^\delta := \frac{\norm{s_k^\delta}^2}{\norm{F'(\lambda_k,\mu_k)^*s_k^\delta }^2}  \,,
	\qquad
	s_k^\delta := F'(\lambda_{k},\mu_{k})^*(F(\lambda_{k},\mu_{k})-\uf^\delta)\,.
\end{equation}
In order to guarantee convergence of this, and in fact most other iterative regularization methods, one typically has to use the so-called \emph{tangential cone condition} or one of its weaker variants, which are often very difficult to show for non-academic examples. Fortunately, for our present problem, if one knows the Lam\'e parameters in an (arbitrarily) small neighbourhood of the boundary and instead of \eqref{eq_operator_eq} considers 
\begin{equation*}
	F_c(\lambda,\mu) = \uf \,,
\end{equation*}
where the operator $F_c$ incorporates the knowledge of the Lam\'e parameters near the boundary, then even the strong tangential cone condition holds. This guarantees the convergence of \eqref{eq_Landweber} with $F$ replaced by $F_c$ when combined with the discrepancy principle \eqref{eq_discrepancy}. For more details on this matter we refer the reader to \cite{HubSheNeuSch18}.

In order to implement the iteration \eqref{eq_Landweber} and the stepsize \eqref{def_okd}, one needs explicit expressions for the Fr\'echet derivative $F'(\lambda,\mu)(h_\lambda,h_\mu)$ and its adjoint $F'(\lambda,\mu)^*\mathbf{w}$. These have been given in \cite{HubSheNeuSch18} and involve the solution of a number of large-scale variational problems, which can be very costly and time consuming. Even though parallelisation can be used to reduce the computational time required for solving these variational problems, the convergence of \eqref{eq_Landweber} may still be too slow. Hence, in order to speed up the iteration, we employ Nesterov's acceleration principle, which leads to the iteration
\begin{equation}\label{eq_Nesterov}
	\begin{split}
		(\bar{\lambda}_{k},\bar{\mu}_{k}) &= (\lambda_{k},\mu_{k}) + \alpha_{k}^\delta \kl{(\lambda_{k},\mu_{k}) - (\lambda_{k-1},\mu_{k-1})} \,,
		\\
		(\lambda_{k+1},\mu_{k+1}) &= (\bar{\lambda}_{k},\bar{\mu}_{k}) - \omega_{k}^\delta F'(\bar{\lambda}_{k},\bar{\mu}_{k})^*(F(\bar{\lambda}_{k},\bar{\mu}_{k})-\uf^\delta)\,,
	\end{split}
\end{equation}
where also $\omega_{k}^\delta$ is computed using the intermediate iterates $(\bar{\lambda}_{k},\bar{\mu}_{k})$. In the numerical results presented below, we used the popular choice of $\alpha_{k}^\delta := (k-1)/(k+2)$; see for example \cite{Jin16}. For the solution of nonlinear inverse problems, iteration methods of the form \eqref{eq_Nesterov} have been studied for various choices of the parameters $\alpha_{k}^\delta$ in \cite{HubRam17, HubRam18}, and convergence was established essentially under the same tangential cone condition as needed for the convergence of Landweber iteration.

In case the noise level $\delta$ is unknown or estimates of it are unreliable, then instead of the discrepancy principle one can use heuristic stopping rules such as the heuristic discrepancy principle \cite{Kin11, KinRai19}, which determines the stopping index $k_*$ by minimizing
\begin{equation*}
	k \mapsto \sqrt{k} \norm{F\kl{\lambda_{k},\mu_{k}}} \,,
\end{equation*} 
and has been successfully employed to a wide range of both linear and nonlinear inverse problems. Even if the noise level $\delta$ is known, using the discrepancy principle can remain a difficult task due to the freedom which one has in choosing the parameter $\tau$. Popular choices such as $\tau=1.1$ are often not feasible, for example when the residual cannot be brought to the level of $\tau \delta$ (in an acceptable amount of time) in practice. Furthermore, theoretically backed choices depend on parameters that are typically either unknown or overestimated  \cite{KalNeuSch08}. Hence, for the numerical results presented below, we used the (heuristic) discrepancy principle together with a monitoring of the residual and the reconstruction quality as an informed guideline to manually determine a suitable stopping index for each test.

It was shown in \cite[Theorem~3.4]{HubSheNeuSch18} that the operator $F'(\lambda,\mu)^*$ can be written as the composition of two operators $E_s$ and $A$, where $E_s$ basically corresponds to the adjoint of the embedding operator from $H^s(\Omega)^2$ to $\LtO^2$. The numerical examples presented in \cite{HubSheNeuSch18} showed that dropping the operator $E_s$ in iteration \eqref{eq_Landweber} or \eqref{eq_Nesterov} leads to improved Lam\'e parameter reconstructions, in which larger jumps are better resolved. Mathematically, this can be understood as a (Hilbert scale) preconditioning of the problem; see for example \cite{EngHanNeu96}. Hence, for the numerical examples presented below, we always dropped the operator $E_s$ in our iterative reconstruction approach \eqref{eq_Nesterov}.

\section{Numerical Results II: Parameter Estimation}\label{sect_numerics_II}

In this section, we employ the iterative regularization approach discussed in Section~\ref{sect_it_reg_approach} to obtain material parameter reconstructions for the simulated and the experimental sample from the displacement field estimations obtained in Section~\ref{sect_numerics} above.

\subsection{Simulated Data}

In this section, we consider the simulated elastography setup from Section~\ref{sect_sim_exp}. The results of our iterative reconstruction approach using the operator $F$, applied to both the exact and the estimated displacement field (see Figure~\ref{fig_sim_exp_fields}), are depicted in Figure~\ref{fig_sim_exp_rec_1}. Starting from the uniform initial guess $(\lambda_0,\mu_0) = (490,10)$, the iteration was stopped after $85$ and $27$ iterations taking about $30$ and $10$ minutes, respectively, and the Young's modulus $E$ was computed from the reconstructed Lam\'e parameters via \eqref{def_Young}. Comparing the results with Figure~\ref{fig_sim_exp_exact} one can clearly see that the reconstructed parameters $\mu$ and $E$ agree well with the exact parameters both in shape and in magnitude. Even though, as noted above, the relative error in the estimated displacement field is $11.53\%$, we still manage to obtain quantitative parameter reconstructions. In particular, the difficulty in recovering the Lam\'e parameter $\lambda$ from just a single displacement field does not prohibit a quantitative reconstruction of the Young's modulus~$E$.

\begin{figure}[ht!]
	\centering
	\includegraphics[width=1\textwidth, clip = true, trim={0cm 0cm 0cm 0cm}]{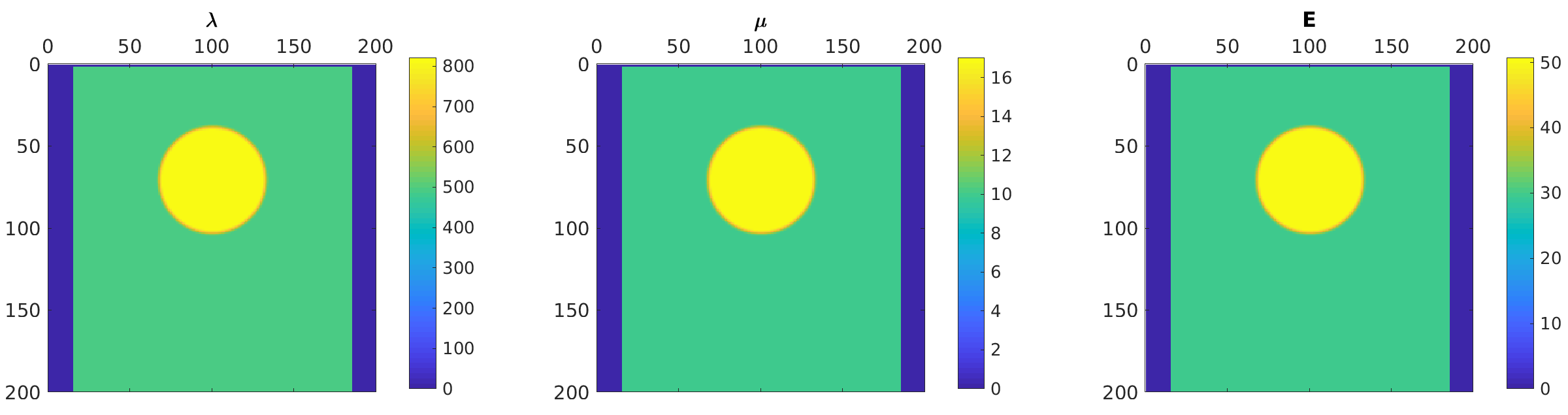}
	\caption{Exact Lam\'e parameters $\lambda$ (left) and $\mu$ (middle), together with the exact Young's modulus $E$ (right) for the simulated sample depicted in Figure~\ref{fig_sim_exp_sample_field}.}
	\label{fig_sim_exp_exact}
\end{figure}

\begin{figure}[ht!]
	\centering
	\includegraphics[width=1\textwidth, clip = true, trim={0cm 0cm 0cm 0cm}]{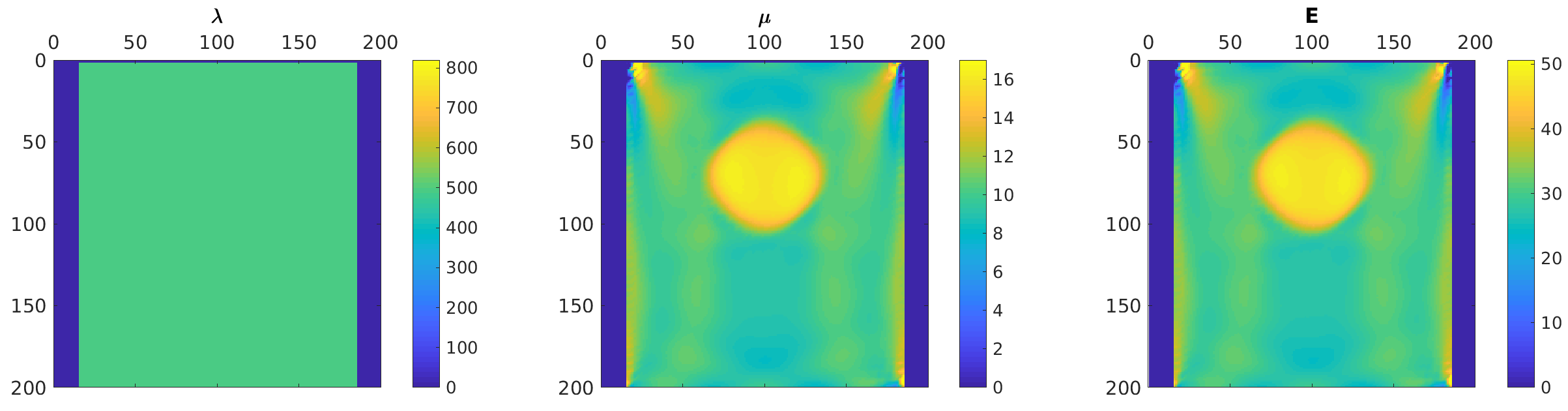}
	\\
	\includegraphics[width=1\textwidth, clip = true, trim={0cm 0cm 0cm 0cm}]{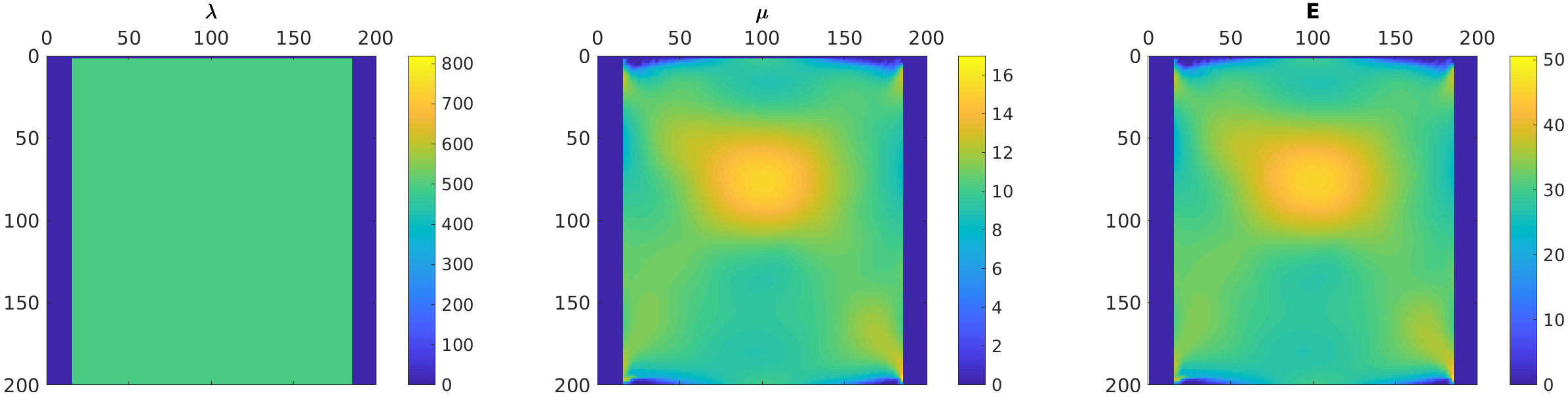}
	\caption{Reconstructed Lam\'e parameters $\lambda$ (left) and $\mu$ (middle), and the Young's modulus $E$ (right) for the simulated sample depicted in Figure~\ref{fig_sim_exp_sample_field}. The reconstructions were obtained by applying the iterative reconstruction approach presented in Section~\ref{sect_it_reg_approach} using the operator $F$ to the exact (top) and estimated (bottom) displacement field. Concerning details on the inability to reconstruct the parameter $\lambda$ see the main text.}
	\label{fig_sim_exp_rec_1}
\end{figure}

Next, we assume that the Lam\'e parameters $\lambda$ and $\mu$ are known in a small neighbourhood of the boundary of the sample. Hence, we can use the operator $F_c$ instead of the operator $F$ in our iterative reconstruction approach, which we again apply to both the exact and the estimated displacement field. The results after $88$ and $3$ iterations taking about $30$ and $1$ minute, respectively, are depicted in Figure~\ref{fig_sim_exp_rec_2}. As before, we obtain quantitative reconstructions of the Lam\'e parameter $\mu$ and the Young's modulus $E$, which are not deteriorated by the inability to reconstruct $\lambda$ from only a single displacement field. Note that the obtained parameter reconstructions using the operator $F_c$, in particular the ones derived from the exact displacement field, better resolve the (circular) shape of the inclusion.

\begin{figure}[ht!]
	\centering
	\includegraphics[width=1\textwidth, clip = true, trim={0cm 0cm 0cm 0cm}]{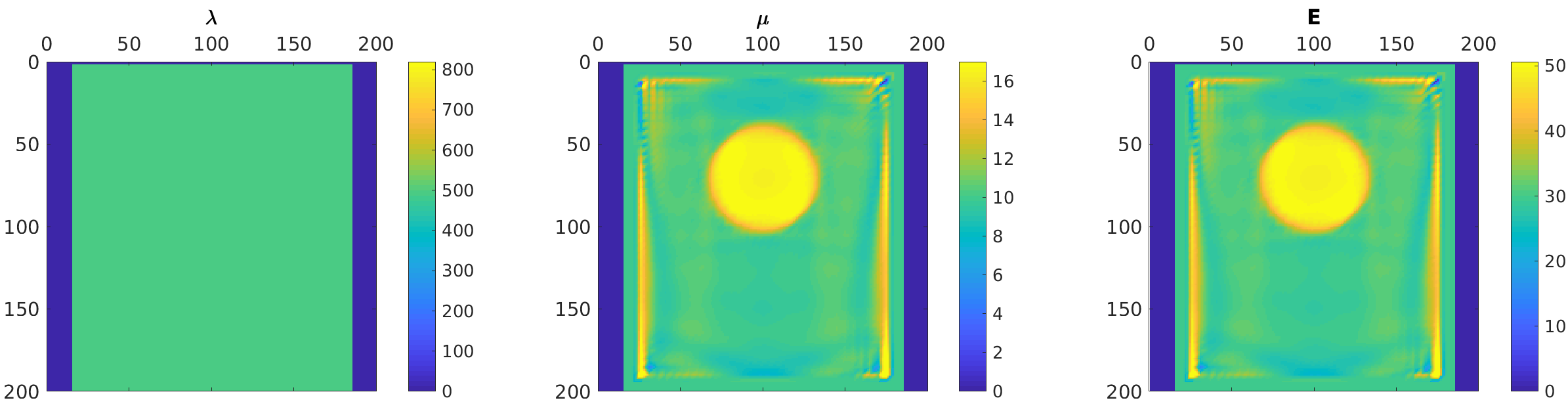}
	\\
	\includegraphics[width=1\textwidth, clip = true, trim={0cm 0cm 0cm 0cm}]{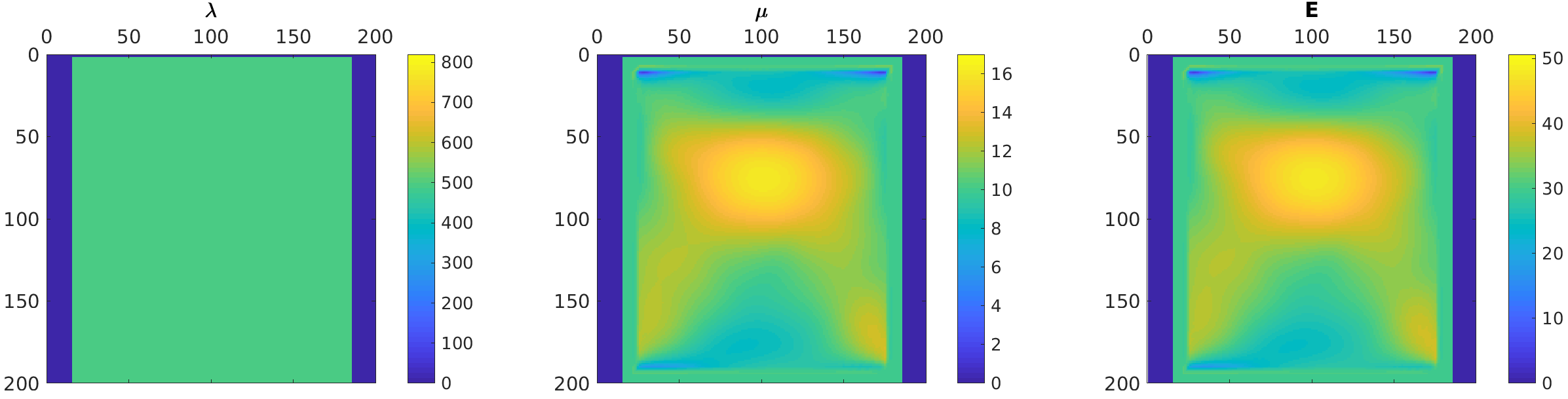}
	\caption{Reconstructed Lam\'e parameters $\lambda$ (left) and $\mu$ (middle), and the Young's modulus $E$ (right) for the simulated sample depicted in Figure~\ref{fig_sim_exp_sample_field}. The reconstructions were obtained by applying the iterative reconstruction approach presented in Section~\ref{sect_it_reg_approach} using the operator $F_c$ to the exact (top) and estimated (bottom) displacement field.}
	\label{fig_sim_exp_rec_2}
\end{figure}

\subsection{Experimental Data}

After considering simulated data in the previous section, we now turn our attention to real data stemming from an OCT elastography experiment. In particular, we apply our iterative reconstruction approach to the homogeneous sample for which we have already estimated the displacement field in Section~\ref{sect_numerics} above. 

The resulting reconstructions of the Lam\'e parameters $\lambda$ and $\mu$ as well as the Young's modulus $E$, obtained after $10$ iterations taking about $10$ minutes and given in kPA, are depicted in Figure~\ref{fig_exp_rec}. One can see that in the inner part of the sample a relatively uniform parameter reconstruction, in particular for $\mu$ and $E$, was obtained. The steep rise towards the edges of the samples is either an artefact due to the dropping of the (smoothing) operator $E_s$ in the iterative reconstruction method, or caused by the fact that the estimated displacement field becomes somewhat inaccurate towards the sides of the sample due to the lack of detected bubbles. Computing the mean values over the inner part of the sample (pixels $300$ to $700$ in x-direction and $200$ to $600$ in y-direction), we get the parameter estimates $\mu = 342$ kPA and $E = 1015$ kPA, with a standard deviation of $49$ kPA and $143$ kPA, respectively.


We currently investigate the efficiency and accuracy of our displacement field estimation and material parameter reconstruction methods also on inhomogeneous samples.

\begin{figure}[ht!]
	\centering
	\includegraphics[width=0.32\textwidth, clip = true, trim={0cm 0cm 0cm 0cm}]{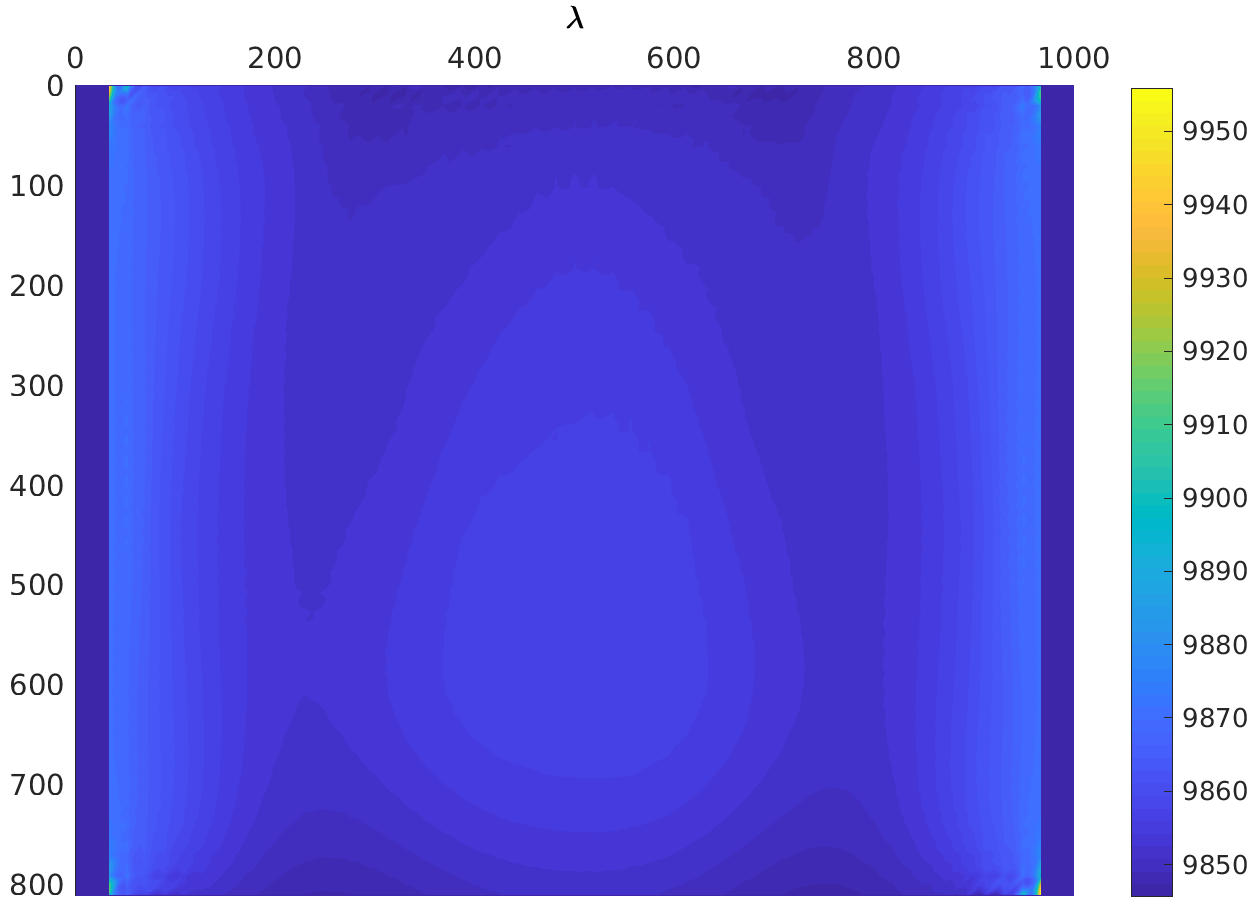}
	\,
	\includegraphics[width=0.32\textwidth, clip = true, trim={0cm 0cm 0cm 0cm}]{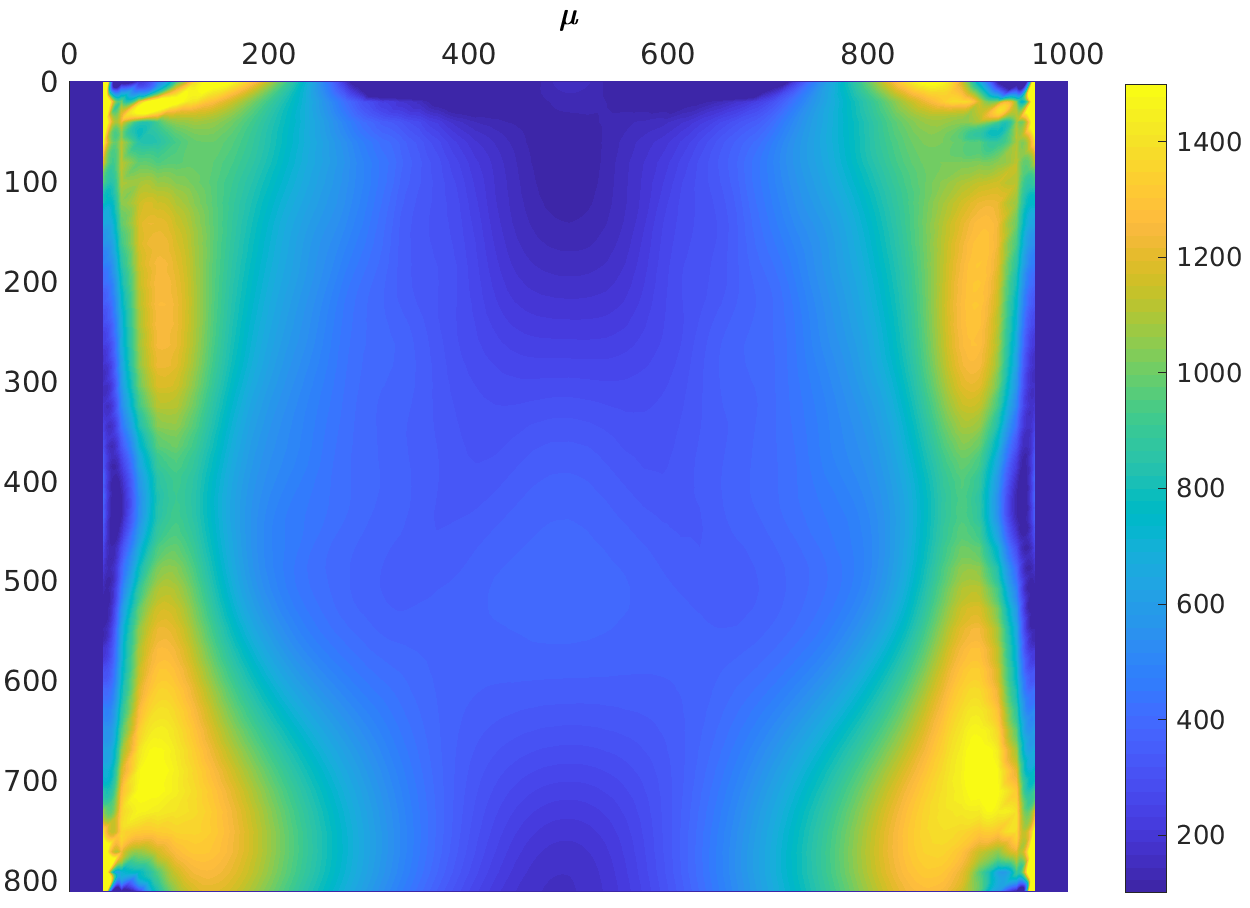}
	\,
	\includegraphics[width=0.32\textwidth, clip = true, trim={0cm 0cm 0cm 0cm}]{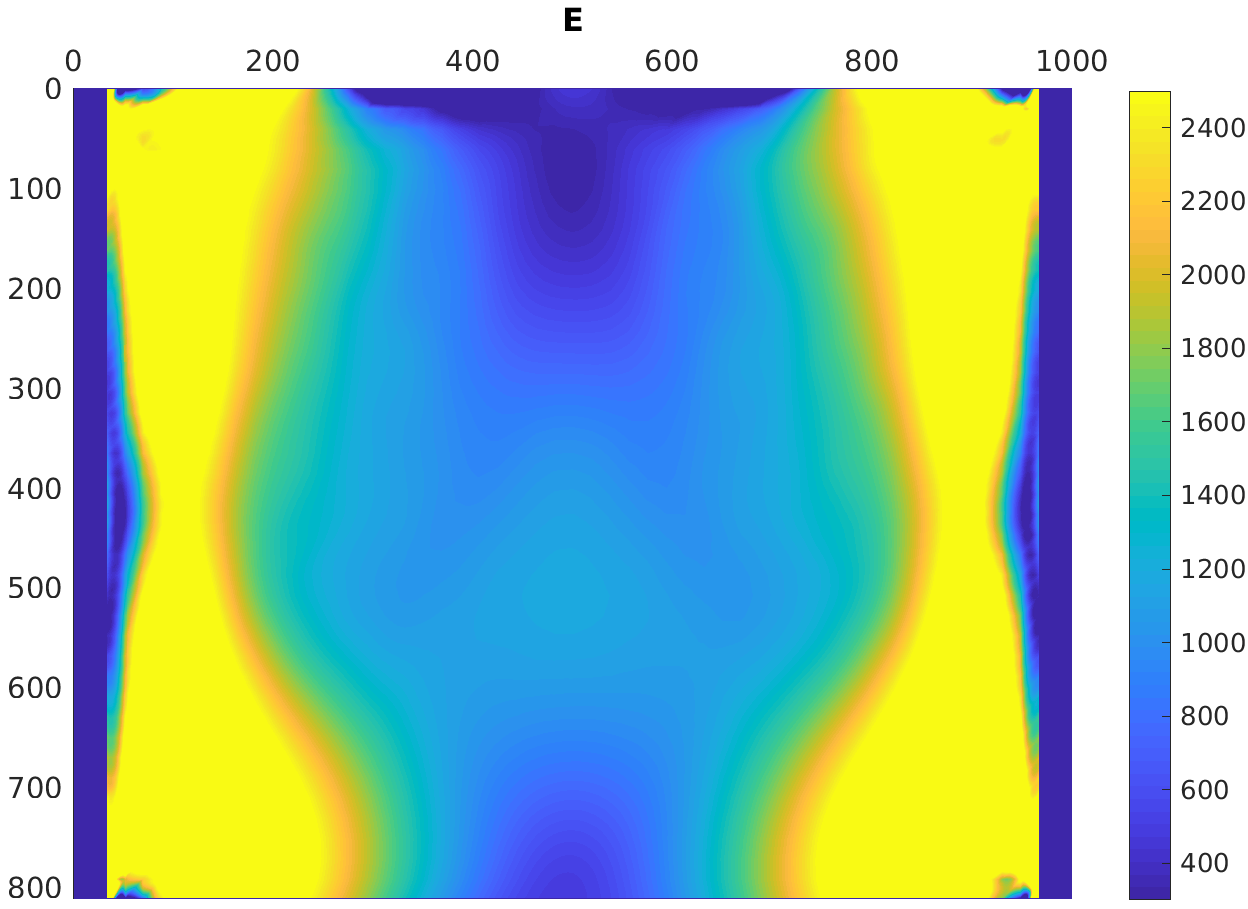}
	\caption{Reconstructed Lam\'e parameters $\lambda$ (left) and $\mu$ (middle), and the Young's modulus $E$ (right), given in kPA, for the experimental sample depicted in Figure~\ref{fig_OCT_speckle}. The results were obtained by applying the iterative reconstruction approach presented in Section~\ref{sect_it_reg_approach} using the operator $F$ to the estimated displacement field depicted in Figure~\ref{fig_OCT_displacement_alpha_comp} (right).}
	\label{fig_exp_rec}
\end{figure}

\section{Conclusion}\label{sect_conclusion}

We presented a displacement field estimation approach applicable to OCT images which utilizes the additional information of speckles present in the OCT images. Apart from a concise analysis of the proposed method, we provided a number of numerical examples demonstrating the usefulness of our approach, in particular when applied to obtain quantitative material parameter estimates in optical coherence elastography.

\section{Support}

The authors were funded by the Austrian Science Fund (FWF): F6807-N36 (ES and OS), project F6803-N36 (LK and WD), and project F6805-N36 (SH).

\section*{References}
\renewcommand{\i}{\ii}
\AtNextBibliography{\footnotesize} 
\printbibliography[heading=none]

@article{         AdiLiaKenJohSamBop2010,
  author        = {Adie, S. G. and Liang, X. and Kennedy, B. F. and John, R.
                   and Sampson, D. D. and Boppart, S. A.},
  title         = {Spectroscopic optical coherence elastography},
  journal       = jour-BOE,
  issn          = {1094-4087},
  language      = {eng},
  number        = {25},
  pages         = {25519--25534},
  url           = {http://search.proquest.com/docview/818645559/},
  volume        = {18},
  year          = {2010},
}

@article{         AlnBleHakJohKeh15,
  author        = {Aln{\ae}s, M.~S. and Blechta, J. and Hake, J. and Johansson,
                   A. and Kehlet, B. and Logg, A. and Richardson, C. and Ring,
                   J. and Rognes, M.~E. and Wells, G.~N.},
  title         = {The {FEniCS} Project Version 1.5},
  xdata         = {jour-ARNS},
  doi           = {10.11588/ans.2015.100.20553},
  number        = {100},
  pages         = {9--23},
  volume        = {3},
  year          = {2015},
  _doictrl      = {noauthor,nopages,nonumber},
}

@article{         AubDerKor99,
  author        = {Aubert, G. and Deriche, R. and Kornprobst, P.},
  title         = {Computing optical flow via variational techniques},
  xdata         = {jour-SJAM},
  issue         = {1},
  pages         = {156--182},
  volume        = {60},
  year          = {1999},
}

@article{         AubKor99,
  author        = {Aubert, G. and Kornprobst, P.},
  title         = {A mathematical study of the relaxed optical flow problem in
                   the space {$\textrm{BV}(\Omega)$}},
  xdata         = {jour-SJMA},
  number        = {6},
  pages         = {1282--1308},
  volume        = {30},
  year          = {1999},
}

@article{         BakSchaLewRotBla11,
  author        = {Baker, S. and Scharstein, D. and Lewis, J.~P. and Roth, S.
                   and Black, M.~J. and Szeliski, R.},
  title         = {A Database and Evaluation Methodology for Optical Flow},
  xdata         = {jour-IJCV},
  doi           = {10.1007/s11263-010-0390-2},
  month         = nov,
  number        = {1},
  pages         = {1--31},
  url           = {http://www.springerlink.com/index/10.1007/s11263-010-0390-2},
  volume        = {92},
  year          = {2011},
}

@article{         BlaAna96,
  author        = {Black, M.J. and Anandan, P.},
  title         = {The robust estimation of multiple motions: Parametric and
                   piecewise-smooth flow fields},
  xdata         = {jour-CVIU},
  pages         = {75--104},
  volume        = {63},
  year          = {1996},
}

@article{         BroMal11,
  author        = {Brox, T. and Malik, J.},
  title         = {Large Displacement Optical Flow: Descriptor Matching in
                   Variational Motion Estimation},
  xdata         = {jour-IEEETPAMI},
  doi           = {10.1109/tpami.2010.143},
  month         = mar,
  number        = {3},
  pages         = {500--513},
  publisher     = {Institute of Electrical and Electronics Engineers ({IEEE})},
  url           = {https://doi.org/10.1109/tpami.2010.143},
  volume        = {33},
  year          = {2011},
}

@article{         Doy12,
  author        = {Doyley, M.~M.},
  title         = {Model-based elastography: a survey of approaches to the
                   inverse elasticity problem},
  xdata         = {jour-PMB},
  pages         = {R35--R73},
  volume        = {57},
  year          = {2012},
}

@article{         DunKir01,
  author        = {Duncan, D.~D. and Kirkpatrick, S.~J.},
  title         = {Processing algorithms for tracking speckle shifts in optical
                   elastography of biological tissues},
  xdata         = {jour-JBO},
  doi           = {10.1117/1.1412224},
  number        = {4},
  pages         = {418},
  publisher     = {{SPIE}-Intl Soc Optical Eng},
  url           = {http://dx.doi.org/10.1117/1.1412224},
  volume        = {6},
  year          = {2001},
}

@article{         HubRam17,
  author        = {Hubmer, S. and Ramlau, R.},
  title         = {Convergence analysis of a two-point gradient method for
                   nonlinear ill-posed problems},
  xdata         = {jour-IP},
  doi           = {10.1088/1361-6420/aa7ac7},
  month         = aug,
  number        = {9},
  pages         = {095004},
  url           = {https://doi.org/10.1088/1361-6420/aa7ac7},
  volume        = {33},
  year          = {2017},
}

@article{         HubRam18,
  author        = {Hubmer, S. and Ramlau, R.},
  title         = {Nesterov's accelerated gradient method for nonlinear
                   ill-posed problems with a locally convex residual functional},
  xdata         = {jour-IP},
  doi           = {10.1088/1361-6420/aacebe},
  month         = jul,
  number        = {9},
  pages         = {095003},
  publisher     = {{IOP} Publishing},
  url           = {https://doi.org/10.1088/1361-6420/aacebe},
  volume        = {34},
  year          = {2018},
}

@article{         Jin16,
  author        = {Jin, Q.},
  title         = {Landweber-Kaczmarz method in Banach spaces with inexact
                   inner solvers},
  xdata         = {jour-IP},
  doi           = {10.1088/0266-5611/32/10/104005},
  month         = aug,
  number        = {10},
  pages         = {104005},
  url           = {https://doi.org/10.1088/0266-5611/32/10/104005},
  volume        = {32},
  year          = {2016},
}

@article{         KenKenSam2014,
  author        = {Kennedy, B. F. and Kennedy, K. M. and Sampson, D. D.},
  title         = {A Review of Optical Coherence Elastography: Fundamentals,
                   Techniques and Prospects},
  journal       = jour-IEEEJQE,
  issn          = {1077-260X},
  number        = {2},
  pages         = {272--288},
  publisher     = {IEEE},
  volume        = {20},
  year          = {2014},
}

@article{         Kin11,
  author        = {Kindermann},
  title         = {Convergence analysis of minimization-based noise level-free
                   parameter choice rules for linear ill-posed problems},
  xdata         = {jour-ETNA},
  pages         = {233--257},
  volume        = {38},
  year          = {2011},
}

@article{         KinRai19,
  author        = {Kindermann, S. and Raik, K.},
  title         = {Heuristic Parameter Choice Rules for Tikhonov Regularization
                   with Weakly Bounded Noise},
  xdata         = {jour-NFAO},
  doi           = {10.1080/01630563.2019.1604546},
  month         = apr,
  number        = {12},
  pages         = {1373--1394},
  url           = {https://doi.org/10.1080/01630563.2019.1604546},
  volume        = {40},
  year          = {2019},
}

@article{         KorDerAub99,
  author        = {Kornprobst, P. and Deriche, R. and Aubert, G.},
  title         = {Image sequence analysis via partial differential equations},
  xdata         = {jour-JMIV},
  pages         = {5--26},
  volume        = {11},
  year          = {1999},
}

@article{         LiWijDanSamMunKenObe2016,
  author        = {Li, D. and Wijesinghe, P. and Dantuono, J. T. and Sampson,
                   D. D. and Munro, P. R. T. and Kennedy, B. F. and Oberai, A.
                   A.},
  title         = {Quantitative Compression Optical Coherence Elastography as
                   an Inverse Elasticity Problem},
  journal       = jour-IEEEJQE,
  issn          = {1077-260X},
  language      = {eng},
  number        = {3},
  pages         = {277--287},
  publisher     = {IEEE},
  volume        = {22},
  year          = {2016},
}

@article{         LiWijSamKenMunObe2019,
  author        = {Li, D. and Wijesinghe, P. and Sampson, D. D. and Kennedy, B.
                   F. and Munro, P. R. T. and Oberai, A. A.},
  title         = {Volumetric quantitative optical coherence elastography with
                   an iterative inversion method.},
  journal       = jour-BOE,
  issn          = {2156-7085},
  language      = {eng},
  number        = {2},
  pages         = {384--398},
  url           = {http://search.proquest.com/docview/2185873238/},
  volume        = {10},
  year          = {2019},
}

@article{         ManOliDreMahKru01,
  author        = {Manduca, A. and Oliphant, T.~E. and Dresner, M.~A. and
                   Mahowald, J.~L. and Kruse, S. A. and Amromin, E. and
                   Felmlee, J.~P. and Greenleaf, J.~F. and Ehman, R.~L.},
  title         = {Magnetic resonance elastography: Non-invasive mapping of
                   tissue elasticity},
  xdata         = {jour-MIA},
  pages         = {237--354},
  volume        = {5},
  year          = {2001},
}

@article{         MeiSanKon13,
  author        = {Meinhardt-Llopis, E. and S{\'a}nchez P{\'e}rez, J. and
                   Kondermann, D.},
  title         = {Horn-Schunck Optical Flow with a Multi-Scale Strategy},
  xdata         = {jour-IMOL},
  doi           = {10.5201/ipol.2013.20},
  month         = jul,
  pages         = {151--172},
  publisher     = {Image Processing On Line},
  url           = {https://doi.org/10.5201/ipol.2013.20},
  volume        = {3},
  year          = {2013},
}

@article{         Nag87,
  author        = {Nagel, N.H.},
  title         = {On the estimation of optical flow: relations between new
                   approaches and some new results},
  xdata         = {jour-AI},
  pages         = {299--324},
  volume        = {33},
  year          = {1987},
}

@article{         PapBruBroDidWei06,
  author        = {Papenberg, N. and Bruhn, A. and Brox, T. and Didas, S. and
                   Weickert, J.},
  title         = {Highly Accurate Optic Flow Computation with Theoretically
                   Justified Warping},
  xdata         = {jour-IJCV},
  month         = jan,
  number        = {2},
  pages         = {141--158},
  url           = {http://www.springerlink.com/index/10.1007/s11263-005-3960-y},
  volume        = {67},
  year          = {2006},
}

@article{         RogPatFujBre04,
  author        = {Rogowska, J. and Patel, N.~A. and Fujimoto, J.~G. and
                   Brezinski, M.~E.},
  title         = {Optical coherence tomographic elastography technique for
                   measuring deformation and strain of atherosclerotic tissues},
  xdata         = {jour-H},
  doi           = {10.1136/hrt.2003.016956},
  month         = may,
  number        = {5},
  pages         = {556--562},
  publisher     = {{BMJ}},
  url           = {https://doi.org/10.1136/hrt.2003.016956},
  volume        = {90},
  year          = {2004},
  _doictrl      = {noauthor},
}

@article{         Schm98,
  author        = {Schmitt, J. M.},
  title         = {{OCT} elastography: imaging microscopic deformation and
                   strain of tissue},
  xdata         = {jour-OE},
  number        = {6},
  pages         = {199-211},
  volume        = {3},
  year          = {1998},
}

@article{         SchmXiaYun99,
  author        = {Schmitt, J. M. and Xiang, S. H. and Yung, K. M.},
  title         = {Speckle in Optical Coherence Tomography},
  xdata         = {jour-JBO},
  number        = {1},
  pages         = {95-105},
  volume        = {4},
  year          = {1999},
}

@article{         Schn91a,
  author        = {Schn{\"o}rr, Ch.},
  title         = {Determining optical flow for irregular domains by minimizing
                   quadratic functionals of a certain class},
  xdata         = {jour-IJCV},
  pages         = {25--38},
  volume        = {6},
  year          = {1991},
}

@article{         SunRotBla13,
  author        = {Sun, D. and Roth, S. and Black, Michael J.},
  title         = {A Quantitative Analysis of Current Practices in Optical Flow
                   Estimation and the Principles Behind Them},
  xdata         = {jour-IJCV},
  number        = {2},
  pages         = {115--137},
  volume        = {106},
  year          = {2013},
}

@article{         SunStaYan11,
  author        = {Sun, C. and Standish, B. and Yang, V. X. D.},
  title         = {Optical coherence elastography, current status and future
                   applications},
  xdata         = {jour-JBO},
  number        = {4},
  pages         = {043001},
  volume        = {16},
  year          = {2011},
}

@article{         WanLar15,
  author        = {Wang, S. and Larin, K.~V.},
  title         = {Optical coherence elastography for tissue characterization:
                   a review},
  xdata         = {jour-JBP},
  doi           = {10.1002/jbio.201400108},
  month         = nov,
  number        = {4},
  pages         = {279--302},
  volume        = {8},
  year          = {2015},
}

@book{          AubKor06,
  author      = {Aubert, G. and Kornprobst, P.},
  title       = {Mathematical problems in image processing},
  address     = {New York},
  edition     = {2},
  isbn        = {978-0387-32200-1},
  note        = {Partial differential equations and the calculus of variations,
                 With a foreword by Olivier Faugeras},
  pagetotal   = {xxxii+377},
  publisher   = {Springer},
  series      = {Applied Mathematical Sciences},
  volume      = {147},
  year        = {2006},
}

@book{          BauCom11,
  author      = {Bauschke, H.~H. and Combettes, P.~L.},
  title       = {Convex analysis and monotone operator theory in {H}ilbert
                 spaces},
  address     = {New York},
  doi         = {10.1007/978-1-4419-9467-7},
  publisher   = {Springer},
  series      = {CMS Books in Mathematics/Ouvrages de Math\'ematiques de la SMC},
  url         = {http://dx.doi.org/10.1007/978-1-4419-9467-7},
  year        = {2011},
}

@book{          Bra07,
  author      = {Braess, D.},
  title       = {Finite Elements},
  address     = {Cambridge},
  edition     = {3},
  note        = {Theory, Fast Solvers, and Applications in Solid Mechanics,
                 Translated from the 1992 German edition by L.L. Schumaker},
  publisher   = {Cambridge University Press},
  year        = {2007},
}

@book{          Cia94,
  author      = {Ciarlet, P.~G.},
  title       = {Mathematical Elasticity: Three-dimensional elasticity},
  isbn        = {9780444817761},
  lccn        = {87023741},
  number      = {1},
  series      = {Mathematical Elasticity},
  year        = {1994},
}

@book{          EngHanNeu96,
  author      = {Engl, H~.W. and Hanke, M. and Neubauer, A.},
  title       = {Regularization of inverse problems},
  address     = {Dordrecht},
  isbn        = {0-7923-4157-0},
  number      = {375},
  pagetotal   = {viii+321},
  publisher   = {Kluwer Academic Publishers Group},
  series      = {Mathematics and its Applications},
  year        = {1996},
}

@book{          McL00,
  author      = {McLean, W.},
  title       = {Strong Elliptic Systems and Boundary Integral Equations},
  address     = {London},
  publisher   = {Cambridge University Press},
  year        = {2000},
}

@book{          Mod03,
  author      = {Modersitzki, J.},
  title       = {Numerical Methods for Image Registration},
  address     = {New York},
  publisher   = {Oxford University Press},
  year        = {2003},
}

@book{          Mod09,
  author      = {Modersitzki, J.},
  title       = {F{AIR}: flexible algorithms for image registration},
  address     = {Philadelphia, PA},
  doi         = {10.1137/1.9780898718843},
  publisher   = {Society for Industrial and Applied Mathematics (SIAM)},
  series      = {Fundamentals of Algorithms},
  url         = {http://dx.doi.org/10.1137/1.9780898718843},
  volume      = {6},
  year        = {2009},
}

@book{          Nec11,
  author      = {Necas, J.},
  title       = {Direct Methods in the Theory of Elliptic Equations},
  isbn        = {9783642104558},
  publisher   = {Springer Berlin Heidelberg},
  series      = {Springer Monographs in Mathematics},
  year        = {2011},
}

@incollection{WijKenSam2020,
  author = {Wijesinghe, P. and Kennedy B. F. and Sampson D. D.},
  title = {Chapter 9 - Optical elastography on the microscale},
  editor = {Alam, S. K. and Garra, B. S.},
  booktitle = {Tissue Elasticity Imaging},
  publisher = {Elsevier},
  address = {Amsterdam},
  pages = {185--229},
  year = {2020},
  isbn = {978-0-12-809661-1},
  doi = {https://doi.org/10.1016/B978-0-12-809661-1.00009-1},
  url = {http://www.sciencedirect.com/science/article/pii/B9780128096611000091}
}

@incollection{    WeiBruBroPap06,
  author        = {Weickert, J. and Bruhn, A. and Brox, T. and Papenberg, N.},
  title         = {A survey on variational optic flow methods for small
                   displacements},
  address       = {Berlin Heidelberg},
  booktitle     = {Mathematical Models for Registration and Applications to
                   Medical Imaging},
  doi           = {10.1007/978-3-540-34767-5_5},
  editor        = {Scherzer, O.},
  isbn          = {978-3-540-25029-6},
  pages         = {103--136},
  publisher     = {Springer},
  series        = {Mathematics in Industry},
  url           = {http://dx.doi.org/10.1007/978-3-540-34767-5_5},
  volume        = {10},
  year          = {2006},
}

@article{Sch96,
  author     = {Scherzer, O.},
  title      = {A convergence analysis of a method of steepest descent and a two-step algorithm for nonlinear ill-posed problems},
  coden      = {NFADOL},
  doi        = {10.1080/01630569608816691},
  issn       = {0163-0563},
  journal    = jour-NFAO,
  fjournal   = fjour-NFAO,
  xdata      = {jour-NFAO},
  number     = {1-2},
  pages      = {197--214},
  url        = {http://dx.doi.org/10.1080/01630569608816691},
  volume     = {17},
  year       = {1996},
}

@article{GlaSchWid15,
  author     = {Glatz, T. and Scherzer, O. and Widlak, T.},
  title      = {Texture Generation for Photoacoustic Elastography},
  doi        = {10.1007/s10851-015-0561-4},
  journal    = jour-JMIV,
  fjournal   = fjour-JMIV,
  xdata      = {jour-JMIV},
  keywords   = {P26687, FSPS105},
  month      = jan,
  number     = {3},
  pages      = {369--384},
  volume     = {52},
  year       = {2015},
  openarch   = {app_springer_12m},
}

@inproceedings{SchmZabWidGlaLiu15,
  author     = {Schmid, J. and Zabihian, B. and Widlak, T. and Glatz, T. and Liu, M. and Drexler, W. and Scherzer, O.},
  title      = {Texture generation in compressional photoacoustic elastography},
  booktitle  = {Photons Plus Ultrasound: Imaging and Sensing 2015},
  doi        = {10.1117/12.2079672},
  keywords   = {P26687},
  pages      = {93232S},
  series     = {Proceedings of SPIE},
  volume     = {9323},
  year       = {2015},
}

@article{WidSch15,
  author     = {Widlak, T. and Scherzer, O.},
  title      = {Stability in the linearized problem of quantitative elastography},
  doi        = {10.1088/0266-5611/31/3/035005},
  journal    = jour-IP,
  fjournal   = fjour-IP,
  xdata      = {jour-IP},
  keywords   = {FSPS105},
  number     = {3},
  pages      = {035005},
  url        = {http://iopscience.iop.org/article/10.1088/0266-5611/31/3/035005/pdf},
  volume     = {31},
  year       = {2015},
  openarch   = {gold_open_access},
}

@article{HubSheNeuSch18,
  author     = {Hubmer, S. and Sherina, E. and Neubauer, A. and Scherzer, O.},
  title      = {Lam{\'e} Parameter Estimation from Static Displacement Field Measurements in the Framework of Nonlinear Inverse Problems},
  doi        = {10.1137/17M1154461},
  journal    = jour-SIS,
  fjournal   = fjour-SIS,
  xdata      = {jour-SIS},
  keywords   = {P26687},
  number     = {2},
  pages      = {1268-1293},
  volume     = {11},
  year       = {2018},
  openarch   = {pv_siam},
}

@book{KalNeuSch08,
  author     = {Kaltenbacher, B. and Neubauer, A. and Scherzer, O.},
  title      = {Iterative regularization methods for nonlinear ill-posed problems},
  address    = {Berlin},
  doi        = {10.1515/9783110208276},
  isbn       = {978-3-11-020420-9},
  publisher  = {Walter de Gruyter},
  series     = {Radon Series on Computational and Applied Mathematics},
  url        = {http://dx.doi.org/10.1515/9783110208276},
  volume     = {6},
  year       = {2008},
}

@inproceedings{  BroBreMal09,
  author      = {Brox, T. and Bregler, C. and Malik, J.},
  title       = {Large displacement optical flow},
  booktitle   = {2009 {IEEE} Conference on Computer Vision and Pattern
                 Recognition},
  doi         = {10.1109/cvpr.2009.5206697},
  month       = jun,
  publisher   = {{IEEE}},
  url         = {https://doi.org/10.1109/cvpr.2009.5206697},
  year        = {2009},
}

@inproceedings{  LauKorMem04,
  author      = {F. Lauze and P. Kornprobst and E. Memin},
  title       = {{A Coarse to Fine Multiscale Approach for Linear Least Squares
                 Optical Flow Estimation}},
  booktitle   = {British Machine Vision Conference},
  pages       = {767--776},
  year        = {2004},
}

@inproceedings{  Sny89,
  author      = {Snyder, M.~A.},
  title       = {On the mathematical foundations of smoothness constraints for
                 the determination of optical flow and for surface
                 reconstruction},
  booktitle   = {Proc. Workshop Visual Motion},
  pages       = {107--115},
  year        = {1989},
}

@XDATA{jour-AI,
  journaltitle	= {Artificial Intelligence},
  shortjournal	= {Artificial Intelligence},
}

@XDATA{jour-ARNS,
  journaltitle = {Archive of Numerical Software},
}

@XDATA{jour-CVIU,
  journaltitle	= {Computer Vision and Image Understanding},
  shortjournal	= {Comput. Vision Image Understanding},
  issn 		= {1077-3142},
}

@XDATA{jour-ETNA,
  journaltitle	= {Electronic Transactions on Numerical Analysis},
  shortjournal	= {Electron. Trans. Numer. Anal.},
}

@XDATA{jour-H,
  journaltitle	= {Heart},
  shortjournal	= {Heart},
}

@XDATA{jour-IEEETPAMI,
  journaltitle	= {IEEE Transactions on Pattern Analysis and Machine Intelligence},
  shortjournal	= {IEEE Trans. Pattern Anal. Mach. Intell.},
  issn		= {0162-8828},
}

@XDATA{jour-IJCV,
  journaltitle	= {International Journal of Computer Vision},
  shortjournal	= {Int. J. Comput. Vision},
  issn		= {0920-5691},
  publisher	= {Springer},
  location	= {Netherlands}
}

@XDATA{jour-IMOL,
  journaltitle	= {Image Processing On Line},
  shortjournal	= {Image Proc. On Line},
}

@XDATA{jour-IP,
  journaltitle	= {Inverse Problems},
  shortjournal	= {Inverse Probl.},
  issn		= {0266-5611},
}

@XDATA{jour-JBO,
  journaltitle	= {Journal of Biomedical Optics},
  shortjournal	= {J. Biomed. Opt.},
  issn		= {1083-3668},
  publisher	= {SPIE},
}

@XDATA{jour-JBP,
  journaltitle	= {Journal of Biophotonics},
  shortjournal	= {J. Biophotonics},
  issn		= {1864-0648},
  publisher	= {Wiley},
}

@XDATA{jour-JMIV,
  journaltitle	= {Journal of Mathematical Imaging and Vision},
  shortjournal	= {J. Math. Imaging Vision},
  issn 		= {0924-9907},
  publisher	= {Springer},
  location	= {Netherlands},
}

@XDATA{jour-MIA,
  journaltitle	= {Medical Image Analysis},
  shortjournal	= {Med. Image Anal.},
  issn		= {1361-8415},
}

@XDATA{jour-NFAO,
  journaltitle	= {Numerical Functional Analysis and Optimization},
  shortjournal	= {Numer. Funct. Anal. Optim.},
  issn		= {0163-0563},
}

@XDATA{jour-OE,
  journaltitle	= {Optics Express},
  shortjournal	= {Opt. Express},
  publisher = {OSA},
}

@XDATA{jour-PMB,
  journaltitle	= {Physics in Medicine and Biology},
  shortjournal	= {Phys. Med. Biol.},
  issn		= {0031-9155},
  publisher	= {IOP Publishing Ltd},
}

@XDATA{jour-SIS,
  journaltitle	= {SIAM Journal on Imaging Sciences},
  shortjournal	= {SIAM J. Imaging Sciences},
  issn		= {1936-4954},
}

@XDATA{jour-SJAM,
  journaltitle	= {SIAM Journal on Applied Mathematics},
  shortjournal	= {SIAM J. Appl. Math.},
  issn		= {0036-1399},
}

@XDATA{jour-SJMA,
  journaltitle	= {SIAM Journal on Mathematical Analysis},
  shortjournal	= {SIAM J. Math. Anal.},
  issn		= {0036-1410},
}

@PREAMBLE{"\def\cprime{$'$} "}

@PREAMBLE{ {\providecommand{\noopsort}[1]{}} }

@STRING{jour-AI = {Artificial Intelligence}}

@STRING{jour-ARNS = {Archive of Numerical Software}}

@STRING{jour-BOE = {Biomed. Opt. Express}}

@STRING{jour-CVIU = {Comput. Vision Image Understanding}}

@STRING{jour-ETNA = {Electron. Trans. Numer. Anal.}}

@STRING{jour-H = {Heart}}

@STRING{jour-IEEEJQE = {IEEE J. Quantum Electron.}}

@STRING{jour-IEEETPAMI = {IEEE Trans. Pattern Anal. Mach. Intell.}}

@STRING{jour-IJCV = {Int. J. Comput. Vision}}

@STRING{jour-IMOL = {Image Proc. On Line}}

@STRING{fjour-IP = {Inverse Problems}}

@STRING{jour-IP = {Inverse Probl.}}

@STRING{jour-JBO = {J. Biomed. Opt.}}

@STRING{jour-JBP = {J. Biophotonics}}

@STRING{fjour-JMIV = {Journal of Mathematical Imaging and Vision}}

@STRING{jour-JMIV = {J. Math. Imaging Vision}}

@STRING{jour-MIA = {Med. Image Anal.}}

@STRING{fjour-NFAO = {Numerical Functional Analysis and Optimization}}

@STRING{jour-NFAO = {Numer. Funct. Anal. Optim.}}

@STRING{jour-OE = {Opt. Express}}

@STRING{jour-PMB = {Phys. Med. Biol.}}

@STRING{fjour-SIS = {SIAM Journal on Imaging Sciences}}

@STRING{jour-SIS  = {SIAM J. Imaging Sciences}}

@STRING{jour-SJAM = {SIAM J. Appl. Math.}}

@STRING{jour-SJMA = {SIAM J. Math. Anal.}}

\end{document}